\def\commutatif{\ar@{}[rd]|{\circlearrowleft}}
\def \gog {\mathfrak g}
\def \goh {\mathfrak h}
\def \gosl {{\mathfrak s}{\mathfrak l}}
\def \goso {{\mathfrak s}{\mathfrak o}}
\def \gosp {{\mathfrak s}{\mathfrak p}}
\newcommand{\cro}[2]{\bigl[ #1 , #2 \bigr]}
\newcommand{\croq}[2]{\bigl[ #1 , #2 \bigr]_q}
\newcommand{\crc}[1]{\bigl[ #1 \bigr]}
\newcommand{\crq}[1]{\bigl[ #1 \bigr]_q}
\newcommand{\crqq}[1]{\bigl[ #1 \bigr]_{q^2}}
\newtheorem{Def}{Definition}[section]
\newtheorem{Theo}[Def]{Theorem}
\newtheorem{Prop}[Def]{Proposition}
\newtheorem{Cor}[Def]{Corollary}
\newtheorem{Lem}[Def]{Lemma}
\newenvironment{Rem}{\textbf{Remarks} \ \ \ }{\newline}
\renewenvironment{proof}{\textbf{proof}}{\qed \newline}
\title{Quantization of coisotropic subalgebras in complex semisimple Lie bialgebras}
\author{Jonathan Ohayon \\
Universit\'e Montpellier II, I3M,  \\
  Place eug\`ene bataillon, 34095 Montpellier,  FRANCE
 \vspace{1mm}  \\
E-mail: johayon@math.univ-montp2.fr }
\begin{document}\normalsize

\maketitle

\begin{abstract}
The aim of this article is to give a quantization of some coisotropic subalgebras in complex semisimple Lie bialgebras. The coisotropic subalgebras that will be quantized are those given by Zambon in his paper "`A Construction for coisotropic subalgebras of Lie Bialgebras"' \cite{[Za1]}. We will also extend the construction for the exceptional complex semisimple Lie bialgebras.
\end{abstract}

\section{Introduction}

Since their introduction in 1986 by V. Drinfeld \cite{[Dr1]}, quantum groups arise as Hopf algebras neither commutative nor cocommutative. They play a central role in the deformation theory of Lie algebras but also of many others structures. One of the first problem to arise was the deformation of the Lie bialgebras \cite{[Dr3]}, which was connected with the deformation of the Poisson-Lie group by the V. Drinfeld functor between $U_h(\gog)$ (QUEA) and $F\left[\left[G\right]\right]$ (QFSHA), where $G$ is a Poisson-Lie group and $\gog = Lie(G)$ is a Lie bialgebra. This duality between the two structures was introduced by V. Drinfeld \cite{[Dr3]}, and was, later on, deepen by F. Gavarini \cite{[Ga1]}. P. Etingof and J. Kazhdan proved that all Lie bialgebras can be quantized \cite{[EK1]}. The remaining questions lie in how the different structures that can be found in the Lie bialgebras can be translated in their quantum counterpart. One of such structure is the coisotropic subalgebra.
\begin{Def} 
A coisotropic subalgebra $\goh$ of a Lie bialgebra $(\gog,\left[ , \right],\delta)$ is a Lie subalgebra which is also a Lie coideal, meaning that $\delta(\goh) \subset \goh \wedge \gog$ .
\end{Def}
This problem of quantization was studied by N. Ciccoli in his article "Quantization of Coisotropic Subgroups" \cite{[Ci1]}. But  as there is a duality between the Poisson-Lie group and the Lie bialgebras, there is one between the coisotropic subalgebra $\goh$ of a Lie bialgebra $\gog$ and the coisotropic subgroup H of a Poisson-Lie group G. This duality can even be extended as we can associate an homogeneous G-space G/H  in a formal sense to the coisotropic subgroup H of a Poisson-Lie group G. This give us four different approaches to the quantization of the coisotropic subgroups by using the quantum duality principle given by N. Ciccoli and F. Gavarini in their article \cite{[CG1]}.
The quantization problem of these objects is still open. It is interresting to note that an answer was given in the special case where the coisotropic subalgebra is a sub Lie bialgebra. This is in fact given by the functoriality of the quantization functor of P. Etingof and J. Kazhdan. Recently, M. Zambon has developed a method to construct coisotropic subalgebras of Lie bialgebras and has explicited this construction in the case of semisimple complex Lie bialgebras \cite{[Za1]}. Therefore, in the aim of giving an answer to the problem of quantization, it is interesting to look at this construction and see if it can be quantized in this case. \newline
Accordingly, the paper is organized as follows. In section 2, we will recall the method to construct the coisotropic subalgebras in semisimple complex Lie bialgebras, for which we will mainly give the results found by M. Zambon and  detail the main steps to follow in order to prove that the coisotropic subalgebras hence constructed can be quantized. In the following sections, we will construct the coisotropic subalgebras by using the Chevalley basis and Serre's relation. Then we will give their quantum counterpart and prove that they are indeed a quantization of the coisotropic subalgebras constructed, in the sense of N. Ciccoli and G. Gavarini. In the study, we will first construct and quantize the coisotropic subalgebra for the usual semi-simple complex Lie bialgebras classified by their types, first of type $A_n$ which corresponds to $\gosl(n+1)$, secondly of type $C_n$  which corresponds to $\gosp(2n)$, then of type $D_n$ which corresponds to $\goso(2n)$, and finally of type $B_n$ which corresponds to $\goso(2n+1)$.  Then, finally, we will repeat the process  for the exceptional semi-simple complex Lie bialgebras, classified by their types.

\subsection*{Acknowledgement}

I thank Gilles Halbout and Benjamin Enriquez for helpful conversation and remarks, that help complete this work.

\section{construction of coisotropic subalgebras in the semi-simple complex algebras}

In his paper \cite{[Za1]}, M. Zambon gives a construction for coisotropic subalgebras of Lie bialgebras and studies his example for the case of the semi-simple complex algebras $\gosl(n+1)$, $\goso(2n+1)$, $\gosp(2n)$ et $\goso(2n)$. First let us recall some of the main theorems that will give rise to those example. In the general case we will have the following:

\begin{Theo}  \label{Coiso}
Let $G$ be a Poisson Lie group corresponding to an r-matrix $\pi$, $X \in \gog=Lie(G)$, g:=exp(X). Assume that
\[ \crc{X , \crc{X , \pi}}=\lambda\crc{X , \pi} \ \ \ for \ some \ \lambda \in \mathbb{R}.\]
Then $\goh^g$ is a coisotropic subalgebra of $\gog$. Further
\[ \goh^g = \crc{X , \pi}^{\#}\gog^*. \]
where $\crc{X , \pi}^{\#}$ correspond to the map from $g^* \to g$ contracted with $\crc{X , \pi}$.
\end{Theo}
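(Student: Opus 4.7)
My plan is first to identify the subspace $\goh^g$ explicitly with the image of $\crc{X,\pi}^{\#}$, which is the second assertion of the theorem, and then to verify the two defining axioms of coisotropicity.

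The identification proceeds as follows. By Zambon's definition, $\goh^g$ is the right-translate of $\pi_g^{\#}\, T_g^*G$ back to $\gog = T_eG$. Since $\pi$ is multiplicative, $(R_{g^{-1}})_*\pi(g) = \operatorname{Ad}(g)\,\pi - \pi$ as an element of $\gog \wedge \gog$, where I abuse notation and also write $\pi$ for the $r$-matrix at the identity. Setting $g = \exp X$ and expanding,
\[ (R_{g^{-1}})_*\pi(g) \;=\; e^{\ad X}\pi - \pi \;=\; \sum_{n \geq 1}\frac{(\ad X)^n \pi}{n!}. \]
The hypothesis $\crc{X,\crc{X,\pi}} = \lambda\,\crc{X,\pi}$ gives, by immediate induction, $(\ad X)^n\pi = \lambda^{n-1}\crc{X,\pi}$ for every $n \geq 1$, so the series collapses to $\frac{e^\lambda - 1}{\lambda}\crc{X,\pi}$ (understood as $\crc{X,\pi}$ when $\lambda = 0$), a nonzero scalar multiple of $\crc{X,\pi}$. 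Applying the sharp map to $\gog^*$ then yields $\goh^g = \crc{X,\pi}^{\#}\gog^*$.

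To establish coisotropicity, I would verify separately the Lie subalgebra and the Lie coideal axioms. For the coideal axiom, I would use the quasitriangular formula $\delta(Y) = \cro{Y \otimes 1 + 1 \otimes Y}{\pi}$: for $Y = \crc{X,\pi}^{\#}\xi$, pushing $\xi$ through the sharp map and invoking the eigenvalue relation once more shows that the first tensor leg of $\delta(Y)$ again lies in the image of $\crc{X,\pi}^{\#}$. For the subalgebra property, the natural identity for $\alpha \in \gog \wedge \gog$ expresses $\cro{\alpha^{\#}\xi}{\alpha^{\#}\eta}$ in terms of $\alpha^{\#}$ of a coadjoint combination of $\xi,\eta$, up to a correction governed by the Schouten bracket $\cro{\alpha}{\alpha}$. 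I would apply this with $\alpha = \crc{X,\pi}$ and control the correction using the CYBE for $\pi$ together with the eigenvalue hypothesis.

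The main obstacle I expect is this subalgebra verification: one must show that the Schouten-bracket defect $\cro{\crc{X,\pi}}{\crc{X,\pi}}$ lands in $\crc{X,\pi}^{\#}\gog^{*}\wedge \gog \wedge \gog$, which is precisely where the relation $(\ad X)^2\pi = \lambda\,\ad X(\pi)$ combines with the classical Yang-Baxter equation for $\pi$ to produce the needed cancellation. The coideal check and the exponential-series identification are, by comparison, clean manipulations.
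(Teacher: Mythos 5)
Be aware first that this paper contains no proof of Theorem \ref{Coiso} to compare against: the theorem is recalled verbatim from Zambon \cite{[Za1]} (``let us recall some of the main theorems''), so the only meaningful comparison is with Zambon's own argument. Your identification step is correct and essentially the standard one: for a coboundary structure $\pi(g)=L_{g*}\pi-R_{g*}\pi$ one has $(R_{g^{-1}})_*\pi(g)=\operatorname{Ad}_g\pi-\pi=e^{\ad X}\pi-\pi$, the induction $(\ad X)^n\pi=\lambda^{n-1}\crc{X,\pi}$ is immediate from the hypothesis, and the series collapses to $\frac{e^\lambda-1}{\lambda}\crc{X,\pi}$, which is a nonzero multiple of $\crc{X,\pi}$ precisely because $\lambda$ is assumed real. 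That part you actually carry out, and it is complete.

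On coisotropicity your plan names the right crux but misplaces one emphasis, and leaves the decisive computation unexecuted. The coideal axiom does not require the eigenvalue relation at all: it holds for $\goh^g$ at \emph{every} $g$ in a Poisson--Lie group, because $(\goh^g)^\perp$ is the isotropy subalgebra of $g$ under the infinitesimal dressing action of $\gog^*$, hence automatically a Lie subalgebra of $\gog^*$; algebraically, in the (modified) CYBE setting the mixed Schouten bracket already vanishes, $\cro{\crc{X,\pi}}{\pi}=\tfrac12\,\ad_X\cro{\pi}{\pi}=0$, since $\cro{\pi}{\pi}$ is $\ad$-invariant. So ``invoking the eigenvalue relation once more'' does no work there, and your ``first tensor leg'' computation is vaguer than the clean annihilator route. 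Conversely, your instinct about the subalgebra step is right, but the cancellation is stronger than you anticipate: the Schouten defect does not merely land in $\crc{X,\pi}^{\#}\gog^*\wedge\gog\wedge\gog$ --- it vanishes identically, since $\ad_X$ is a derivation of the Schouten bracket, giving
\[ \cro{\crc{X,\pi}}{\crc{X,\pi}} \;=\; \ad_X\cro{\crc{X,\pi}}{\pi} \;-\; \cro{\crc{X,\crc{X,\pi}}}{\pi} \;=\; 0 \;-\; \lambda\cro{\crc{X,\pi}}{\pi} \;=\; 0. \]
Thus $\crc{X,\pi}$ is itself a triangular r-matrix, and the image of its sharp map is a Lie subalgebra by the standard identity expressing $\cro{\alpha^{\#}\xi}{\alpha^{\#}\eta}$ through $\alpha^{\#}$ of coadjoint terms plus a $\cro{\alpha}{\alpha}$-correction. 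Since you only gesture at this (``I would apply\dots I expect\dots'') rather than proving it, your proposal is a sound outline with the correct ingredients --- the eigenvalue hypothesis enters exactly and only where you predicted --- but it is not yet a proof, and the coideal discussion should be repaired as above.
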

This theorem works for every Lie bialgebra and gives rise to coisotropic subalgebras of even dimension. But it is not giving all of them. One can wonder if there is a less restrictive condition that will give rise to all the coisotropic subalgebras.
\newline
Let's now, restrict ourselves to the case of $\gog$ a semi-simple complex Lie bialgebra. By using the roots system R of $\gog$, we can construct families of coisotropic subalgebras.  For $\alpha \in R^+$, the positive roots we have $\alpha = \alpha_{i_1} \cdots \alpha_{i_r}$ where $\alpha_{i_j} \in \{ \alpha_1 \cdots \alpha_n\}$, we can associate to $\alpha$ a non-zero element \cite{[FH91]},
\[ e_{\alpha} = \cro{\cro{e_{\alpha_{i_1}}}{e_{\alpha_{i_{2}}}}}{\dots,e_{\alpha_{i_r}}} \in \gog^{\alpha} \]
and in the same way we associate a non zero element to $- \alpha$:
\[ f_{\alpha} = \cro{\cro{f_{\alpha_{i_1}}}{f_{\alpha_{i_{2}}}}}{\dots,f_{\alpha_{i_r}}} \in \gog^{-\alpha} \]
Those elements will give rise to a r-matrix defined as follows:
\[ \pi:= \sum_{\alpha \in R^+}{\lambda_{\alpha} e_{\alpha} \wedge f_{\alpha}} \]
where $\lambda_{\alpha} = \frac{1}{K( e_{\alpha},f_{\alpha})}$ and K is the killing form (a non degenerative definite positive bilinear form) associated to the Lie bialgebra.
\begin{Lem} \label{lCo}
Let $X \in \gog$ and assume that for all $\alpha \in R^+$, we have:
\begin{enumerate}
\item $\cro{X}{\cro{X}{e_{\alpha}}} \wedge f_{\alpha} = 0$
\item $\cro{X}{e_{\alpha}} \wedge \cro{X}{f_{\alpha}} = 0$
\item $e_{\alpha} \wedge \cro{X}{\cro{X}{f_{\alpha}}} = 0$
\end{enumerate}
Then X satisfies the condition of theorem \ref{Coiso} with $\lambda = 0$.
\end{Lem}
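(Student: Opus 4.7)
The plan is to simply compute $[X,[X,\pi]]$ directly from the explicit formula for $\pi$ and show that the three hypotheses force it to vanish; since $0 = 0 \cdot [X,\pi]$, this is exactly the condition of Theorem \ref{Coiso} with $\lambda = 0$.

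First I would recall that the adjoint action $[X,\cdot]$ extends as a derivation to $\gog \wedge \gog$, so that for any $a,b \in \gog$ we have $[X, a \wedge b] = [X,a] \wedge b + a \wedge [X,b]$. Applying this to
\[ \pi = \sum_{\alpha \in R^+} \lambda_\alpha \, e_\alpha \wedge f_\alpha \]
gives
\[ [X,\pi] = \sum_{\alpha \in R^+} \lambda_\alpha \bigl( [X,e_\alpha] \wedge f_\alpha + e_\alpha \wedge [X,f_\alpha] \bigr). \]
Applying $[X,\cdot]$ once more and using the derivation property again on each of the two summands, I would obtain
\[ [X,[X,\pi]] = \sum_{\alpha \in R^+} \lambda_\alpha \Bigl( [X,[X,e_\alpha]] \wedge f_\alpha + 2\,[X,e_\alpha] \wedge [X,f_\alpha] + e_\alpha \wedge [X,[X,f_\alpha]] \Bigr). \]

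At this point each of the three types of terms appearing in the sum is exactly one of the three expressions assumed to vanish in the hypothesis (for every $\alpha \in R^+$). Therefore each summand is zero and $[X,[X,\pi]] = 0$, which we can trivially rewrite as $[X,[X,\pi]] = 0 \cdot [X,\pi]$. By Theorem \ref{Coiso} applied with $\lambda = 0$, this is what was to be shown.

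There is no real obstacle here: the argument is a two-line Leibniz expansion. The only thing worth double-checking is the coefficient $2$ on the mixed term $[X,e_\alpha] \wedge [X,f_\alpha]$, which appears because both the differentiation of $[X,e_\alpha] \wedge f_\alpha$ with respect to $f_\alpha$ and the differentiation of $e_\alpha \wedge [X,f_\alpha]$ with respect to $e_\alpha$ produce the same wedge; this is precisely why hypothesis (2) is phrased as the vanishing of $[X,e_\alpha] \wedge [X,f_\alpha]$ rather than as a bracket condition. Once that coefficient is verified, the three hypotheses are matched one-to-one with the three families of terms, and the conclusion is immediate.
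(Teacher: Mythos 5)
Your proof is correct: extending $\cro{X}{\cdot}$ as a derivation to $\gog\wedge\gog$ and expanding twice gives $\cro{X}{\cro{X}{\pi}}=\sum_{\alpha\in R^+}\lambda_\alpha\bigl(\cro{X}{\cro{X}{e_\alpha}}\wedge f_\alpha + 2\,\cro{X}{e_\alpha}\wedge\cro{X}{f_\alpha} + e_\alpha\wedge\cro{X}{\cro{X}{f_\alpha}}\bigr)$, each summand vanishes by the three hypotheses, and $\cro{X}{\cro{X}{\pi}}=0=0\cdot\cro{X}{\pi}$ is exactly the condition of Theorem \ref{Coiso} with $\lambda=0$. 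The paper states this lemma without proof (it is recalled from Zambon \cite{[Za1]}), and your two-line Leibniz expansion is precisely the intended argument; the coefficient $2$ you flag is immaterial since hypothesis (2) kills the mixed term regardless.
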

\begin{Prop}  \label{pCo}
Let $\beta \in R^+$ satisfying the following condition: \newline
For all $\alpha \in R$:  ($\alpha + \mathbb{Z}\beta$) $\cap R$ does not contain a string of three consecutive elements.\newline
Then $e_{\beta}$ et $f_{\beta}$ satisfies lemma \ref{lCo} and by consequence theorem \ref{Coiso}.
\end{Prop}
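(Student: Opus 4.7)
The strategy is to apply Lemma \ref{lCo} to $X=e_\beta$ and to $X=f_\beta$ separately, so that for each of these two choices I have to verify the three vanishing conditions termwise for every $\alpha \in R^+$. Since $[e_\beta,e_\gamma]$ is a nonzero scalar multiple of $e_{\beta+\gamma}$ precisely when $\beta+\gamma\in R$ (and similarly for the other brackets), the three conditions all translate into purely combinatorial statements about the $\beta$-root strings $(\gamma + \mathbb{Z}\beta)\cap R$.

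Conditions (1) and (3) for $X=e_\beta$ are almost immediate. The element $[e_\beta,[e_\beta,e_\alpha]]$ lies in $\gog^{\alpha+2\beta}$, and if it were nonzero the three weights $\alpha,\alpha+\beta,\alpha+2\beta$ would all be roots, forming three consecutive members of $(\alpha+\mathbb{Z}\beta)\cap R$ and contradicting the hypothesis; hence the iterated bracket itself vanishes, and wedging with $f_\alpha$ gives zero. The same argument applied to the $\beta$-string through $-\alpha$ shows that $[e_\beta,[e_\beta,f_\alpha]]=0$, yielding (3).

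Condition (2) is the real content. Here $[e_\beta,e_\alpha]\in\gog^{\alpha+\beta}$ and $[e_\beta,f_\alpha]\in\gog^{\beta-\alpha}$ sit in distinct root spaces when both are nonzero (and $\alpha\ne\beta$), so the wedge cannot vanish by a weight collision. I therefore argue contrapositively: suppose both $\alpha+\beta$ and $\beta-\alpha$ belong to $R$. Since roots come in opposite pairs, $-(\alpha+\beta)=-\alpha-\beta$ is also a root, so the three elements $-\alpha-\beta,\ -\alpha,\ \beta-\alpha$ lie in $(-\alpha+\mathbb{Z}\beta)\cap R$ as three consecutive members, contradicting the hypothesis applied to $-\alpha\in R$. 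Hence at least one of the two brackets vanishes and the wedge is zero. The degenerate case $\alpha=\beta$ is trivial since $[e_\beta,e_\beta]=0$, and $\alpha+\beta=0$ cannot occur as both are positive. This is the step I expect to be the main obstacle: once the hypothesis is read through the string at $-\alpha$ rather than $\alpha$ it is immediate, but the trick is to look in the right direction.

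The verification for $X=f_\beta$ is entirely parallel by the symmetry $\beta\mapsto -\beta$, which preserves the hypothesis because $\mathbb{Z}\beta=\mathbb{Z}(-\beta)$. Conditions (1) and (3) again follow from the string condition applied to $\alpha$ and to $-\alpha$, while (2) reduces to excluding the simultaneous presence of $\alpha-\beta$ and $\alpha+\beta$ in $R$; if both were roots then $\alpha-\beta,\alpha,\alpha+\beta$ would be three consecutive elements of $(\alpha+\mathbb{Z}\beta)\cap R$, a contradiction. Lemma \ref{lCo} therefore applies to both $e_\beta$ and $f_\beta$, and Theorem \ref{Coiso} then yields the coisotropy of the associated subalgebras.
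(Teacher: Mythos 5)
Your strategy is the right one, and in fact it is the only one available for comparison: the paper states Proposition \ref{pCo} without proof, importing it from Zambon \cite{[Za1]}, and the intended argument is precisely your translation of the three wedge conditions of Lemma \ref{lCo} into combinatorics of $\beta$-root strings, applying the hypothesis at $\pm\alpha$. Your handling of condition (2) is correct and well presented: since $\gog^{\alpha+\beta}$ and $\gog^{\beta-\alpha}$ are distinct root spaces (they coincide only if $\alpha=0$), the wedge cannot vanish by accident, so one must rule out $\alpha+\beta$ and $\beta-\alpha$ being simultaneously roots, which the hypothesis does — read either through the string at $-\alpha$, as you do, or equivalently through $\alpha-\beta,\alpha,\alpha+\beta$ at $\alpha$.

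There is, however, one genuine (if locally repairable) error: your claim in condition (3) that $[e_\beta,[e_\beta,f_\alpha]]=0$ for every $\alpha\in R^+$ is false at $\alpha=\beta$. There $[e_\beta,f_\beta]=h_\beta$ lies in the Cartan subalgebra, not in a root space, and $[e_\beta,h_\beta]=-\beta(h_\beta)e_\beta\neq 0$; the string argument is silent here because the middle term of the would-be string $-\beta,\,0,\,\beta$ is $0\notin R$. The dictionary ``the bracket is nonzero iff the sum of the weights is a root'' has a Cartan exception exactly when the weights sum to zero — you caught this exception in condition (2) (``$\alpha+\beta=0$ cannot occur'') but not in (3), nor in the parallel condition (1) for $X=f_\beta$, where $[f_\beta,[f_\beta,e_\beta]]\propto f_\beta\neq 0$. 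The conclusion nevertheless survives, because Lemma \ref{lCo} asks only that the \emph{wedges} vanish: at $\alpha=\beta$ one gets $e_\beta\wedge(c\,e_\beta)=0$ and $(c'f_\beta)\wedge f_\beta=0$. For completeness you should also exclude the other potential Cartan collision in (3), namely $\beta-\alpha=-\beta$, i.e.\ $\alpha=2\beta$, where the second bracket would land in the Cartan and would wedge nontrivially with $e_\alpha$; this cannot occur because the root system of a complex semisimple Lie algebra is reduced, so $2\beta\notin R$. With these two remarks inserted, your proof is complete and coincides with the intended one.
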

\begin{Cor}  \label{CCo}
Assume that $\beta \in R^+$ satisfies the condition in the proposition \ref{pCo}. Let $\gog_{\mathbb{R}}$ denote $\gog$ viewed as a real Lie algebra. Then $\cro{e_{\beta}}{\pi}^{\#}\gog_{\mathbb{R}}^*$ and $\cro{f_{\beta}}{\pi}^{\#}\gog_{\mathbb{R}}^*$
\begin{itemize}
\item are coisotropic subalgebras of $\gog_{\mathbb{R}}$.
\item their complexification are coisotropic subalgebras of the complex Lie bialgebra $\gog$
\end{itemize}
\end{Cor}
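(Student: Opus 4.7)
The plan is to chain the three preceding results. By Proposition \ref{pCo} applied to the assumed root $\beta$, each of $X = e_\beta$ and $X = f_\beta$ satisfies the three wedge identities appearing in Lemma \ref{lCo}; that lemma then provides $\cro{X}{\cro{X}{\pi}} = 0$, which is exactly the hypothesis of Theorem \ref{Coiso} with $\lambda = 0$.

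For the first bullet, I would invoke Theorem \ref{Coiso} with the underlying real Lie bialgebra $\gog_\mathbb{R}$ (and $\pi$ viewed as a real bivector defining the Poisson-Lie structure on the realification of $G$). The theorem's conclusion, $\goh^g = \cro{X}{\pi}^{\#}\gog_\mathbb{R}^{*}$, is then exactly the desired coisotropic subalgebra of $\gog_\mathbb{R}$. For the second bullet, I would identify the complexification of this real subspace with the complex image $\cro{X}{\pi}^{\#}\gog^{*}$: since $\cro{X}{\pi} \in \wedge^{2}_{\mathbb{C}}\gog$, the contraction it defines is $\mathbb{C}$-linear, so its image on the complex dual $\gog^{*}$ is precisely the $\mathbb{C}$-span of its image on $\gog_\mathbb{R}^{*}$. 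A second application of Theorem \ref{Coiso}, this time directly in the complex category, then yields coisotropy of this complex subspace inside $\gog$.

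The main obstacle I anticipate is the bookkeeping around the passage between the real and complex pictures: giving precise meaning to $\pi$ simultaneously as a complex bivector (as written in terms of the Chevalley generators $e_\alpha, f_\alpha$) and as a real $r$-matrix on $G_\mathbb{R}$, and verifying that the two applications of Theorem \ref{Coiso} produce subspaces genuinely related by complexification. Once this identification is cleanly in place, no further computation is required: the subalgebra and coideal properties follow from Theorem \ref{Coiso} in each category, and the compatibility of the real bracket and cobracket with their complex-linear extensions is automatic because $\gog_\mathbb{R}$ is just $\gog$ with scalars restricted.
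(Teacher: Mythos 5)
Your chaining of Proposition \ref{pCo} $\Rightarrow$ Lemma \ref{lCo} $\Rightarrow$ Theorem \ref{Coiso} (the three wedge identities give $\cro{X}{\cro{X}{\pi}}=0$, i.e.\ the hypothesis with $\lambda=0$, applied over $\gog_{\mathbb{R}}$) is exactly how this corollary is meant to follow --- the paper itself states it without proof, recalling it from Zambon's paper \cite{[Za1]} as an immediate consequence of the three preceding results. For the second bullet, the simplest route is not a second application of Theorem \ref{Coiso} in a complex category (the theorem is stated for real Poisson-Lie groups, with $\lambda\in\mathbb{R}$) but rather the direct observation you yourself make at the end: the complexification $\goh+i\goh$ of a coisotropic subalgebra $\goh\subset\gog_{\mathbb{R}}$ is automatically a subalgebra and Lie coideal of $\gog$, since the complex bracket and cobracket are the $\mathbb{C}$-linear extensions of the real ones.
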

We want to give a quantization to this construction. First, let's recall what we mean by quantization in this case. Like we said in the introduction, the problem of quantization of such object was studied by N. Ciccoli and F. Gavarini. In their paper \cite{[CG1]}, they gave a characterization of the quantization of the coisotropic subalgebras.
\begin{Def} 
A quantization of a coisotropic subalgebras $\goh$ of $\gog$ is a subalgebra, left (or right) coideal $B_h$ of $U_h(\gog)$ such that:
\[ B_h/hB_h \cong \pi_{U_h}(B_h) = U(\goh) \]
where $\pi_{U_h} : U_h(\gog) \rightarrow U(\gog)$ is the specialization map at $h=0$.
\end{Def}
The constraint $B_h/hB_h \cong \pi_{U_h}(B_h) = U(\goh)$ means the following. We have a map $B_h \to U_h(\gog) \to U_h(\gog)/hU_h(\gog) \cong  U(\gog)$ and the composed map  $B_h \to U(\gog)$ can be factored through $B_h/hB_h$.
\[ \xymatrix{
   B_h  \ar[r]^-{} \ar[d]_-{\pi_{B_h}}  &  U(\gog)  \\
   B_h/hB_h \ar[ru]^-{} } \]
Then we want the factored map $B_h/hB_h \to U(\gog)$ to be a bijection in $\pi_{U_h}(B_h)$ which should coincide with $U(\goh)$. \newline
They also demonstrated that this constraint can be replaced by $B_h \cap h U_h(\gog) = h B_h$. Indeed we have that $\pi_{U_h}(B_h) = B_h / (B_h \cap hU_h(\gog))$ and therefore $B_h/hB_h \cong \pi_{U_h}(B_h)$. \newline
\begin{Rem} It is easy to see that if we have a subalgebra left coideal $B_h$ of $U_h(\gog)$ such that $B_h/hB_h = U(\goh)$ then $\goh$ is a coisotropic subalgebra of $\gog$. Meaning that the semi-classical linit is still well defined in this context. 
\end{Rem} 

  We will now detail the steps that we will take in the rest of the paper.
In the following sections we will give a quantization of the different coisotropic subalgebra that we can construct using the preceding theorems and definitions. To do so, we first need to determine the roots $\beta$ that will satisfy the condition of proposition \ref{pCo}. Then, we need to fix a cartan in order to construct the r-matrix given by
\[ \pi:= \sum_{\alpha \in R^+}{\lambda_{\alpha} e_{\alpha} \wedge f_{\alpha}} \]
and finally we need to compute $\cro{e_{\beta}}{\pi}$ in order to determine the elements that will generate the coisotropic subalgebra $\goh$ according the corollary \ref{CCo}. \newline
In a second time, we will choose a candidate $B_h$ to be the quantization, which will be the algebra spanned by a lift up of the generators of the coisotropic subalgebra $U(\goh)$ in $U_h(\gog)$. We will then verify that it is a subalgebra, left (or right) coideal of the bialgebra $U_h(\gog)$. \newline
And finally we will need to check that it is indeed the quantization of $\goh$. Meaning that we have to verify if $B=B_h/hB_h$ is isomorphic to $U(\goh)$.  For that we will use a proof similar to the one of Poincare Birkhoff Witt theorem.\newline
We will prove that $S(\goh)$ is isomorphic as a vectorial space to $B$ which will give us the wanted isomorphism by using the Poincare Birkhoff Witt theorem. By construction we have that $U(\goh) \subset B$ therefore we directly have the injection of $S(\goh)$ in $B$. \newline
Therefore, only the surjectivity remains, to prove it we will use the following proposition, for which we need to chose an order in $B_h$. 
\begin{Prop} \label{PropQ}
All elements $A$ in $B_h$,can be written in the form $A=\sum_k \sum_n h^n X_{n1} \cdots X_{nk}$ where $X_{ni}$ are elements of $B_h$ of degree 1 and without $h$. If all monome $X = X_{n1} \cdots X_{nk}$ can be written in the form:
\[ X = Y + X' + h*X''\]
where $Y = X_{n\sigma(1)} \cdots X_{n\sigma(k)}$ is well ordered when considering the order chosen, $X'$ is an element of degree inferior to $k$ and $X''$ is an element in $B_h$.  Then $B=B_h/hB_h$ is isomorphic to $S(\goh)$.
\end{Prop}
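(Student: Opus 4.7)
The plan is to exhibit a linear isomorphism $\Phi: S(\goh) \to B := B_h/hB_h$ built from a fixed ordered lift of a basis of $\goh$, use the given inclusion $U(\goh) \subset B$ together with classical PBW to get injectivity essentially for free, and prove surjectivity by induction on degree using the two hypotheses of the proposition.

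First I would fix a totally ordered basis $(e_i)_{i \in I}$ of $\goh$ and, for each $i$, a degree-one lift $\widetilde{e}_i \in B_h$ without $h$ whose class modulo $h$ equals $e_i$ (the existence of such lifts is precisely the content of the first hypothesis applied to the generators of $B_h$). Define $\Phi$ by sending an ordered monomial $e_{i_1}\cdots e_{i_k}$ with $i_1 \le \cdots \le i_k$ to the class of $\widetilde{e}_{i_1}\cdots\widetilde{e}_{i_k}$ in $B$. For injectivity, by assumption $U(\goh) \subset B$, and the classical PBW theorem says that the ordered monomials in the $e_i$ form a basis of $U(\goh)$; their images in $B$ coincide with $\Phi$ applied to the corresponding symmetric monomials, so they remain linearly independent in $B$.

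Surjectivity is the real content. Let $F^k B$ be the subspace of $B$ spanned by classes of products $X_{01}\cdots X_{0j}$ of degree-one generators with $j \le k$; the first hypothesis, after reducing modulo $h$ to kill every term carrying a factor of $h$, shows that every $[A] \in B$ lies in some $F^k B$. I would prove $F^k B \subset \mathrm{im}(\Phi)$ by induction on $k$. The base case $k = 0$ is trivial. For the inductive step, take a generator $[X_{01}\cdots X_{0k}]$ of $F^k B$ and apply the second hypothesis to write $X_{01}\cdots X_{0k} = Y + X' + hX''$ in $B_h$, with $Y$ a well-ordered monomial in the $\widetilde{e}_i$'s, $\deg X' < k$, and $X'' \in B_h$. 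Note that $X' = X_{01}\cdots X_{0k} - Y - hX''$ is automatically in $B_h$. Reducing modulo $h$: $[X_{01}\cdots X_{0k}] = [Y] + [X']$, where the first summand is in $\mathrm{im}(\Phi)$ by definition and $[X'] \in F^{k-1} B \subset \mathrm{im}(\Phi)$ by the induction hypothesis. This completes the proof that $\Phi$ is a vector space isomorphism, so $B \cong S(\goh)$; combined with classical PBW $S(\goh) \cong U(\goh)$ and the inclusion $U(\goh) \subset B$, this forces $B \cong U(\goh)$.

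At this abstract level the proof is a clean induction and presents no serious obstacle. The real difficulty is deferred to the applications in the subsequent sections: for each concrete choice of $B_h$ (for types $A_n$, $B_n$, $C_n$, $D_n$ and the exceptional types), one must verify the second hypothesis, namely that after reordering a product of generators the remainder term $X''$ sits inside $B_h$ and not merely in $U_h(\gog)$. This closure property under reordering is where the particular structure of the chosen coisotropic subalgebra must be exploited, and where the detailed analysis of quantum commutators and quantum Serre relations will carry the real weight.
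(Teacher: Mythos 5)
Your proof is correct and takes essentially the same route as the paper: the paper only sketches this argument (injectivity of $S(\goh)\to B$ from the inclusion $U(\goh)\subset B$ together with the classical Poincar\'e--Birkhoff--Witt theorem, and surjectivity by a PBW-style induction on the degree filtration using the two stated hypotheses, with the remainder term $hX''$ dying in $B_h/hB_h$), and your write-up simply fills in those details. The only difference is organizational: the paper further reduces the reordering hypothesis to degree-$2$ commutators of generators (Corollary \ref{LemQ}) by permuting factors two by two, a reduction you do not need since you invoke the hypothesis in its full strength, and you rightly identify that the substantive work is verifying, case by case, that $X''$ lands in $B_h$.
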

\begin{Rem} 
Following the proof of  Poincare Birkhoff Witt theorem, this proposition will prove the surjectivity of $S(\goh)$ in $B$.
One can see that we only need to prove this proposition for elements of degree 2. Because by induction, we can extend it for elements of degree superior to 2. This is done by permuting the elements two by two.
\end{Rem}
Therefore we will use the following corollary, in wich we need to fix a set of generators in $B_h$, which will be the elements of degree 1.
\begin{Cor} \label{LemQ}
If for all $X_1$,$X_2$ generators of $B_h$, we have:
\[ X_1X_2 - X_2X_1 =  X' + hX'' \]
where X' is either a generator or 0 and X'' is in $B_h$. Then $B=B_h/hB_h$ is isomorphic to $S(\goh)$.
\end{Cor}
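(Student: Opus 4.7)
The plan is to deduce Corollary \ref{LemQ} from Proposition \ref{PropQ} by showing that the degree-two relation in the hypothesis propagates to every monomial through a bubble sort on adjacent inversions. Since Proposition \ref{PropQ} already yields the desired isomorphism once one knows that every monomial $X_{n_1} \cdots X_{n_k}$ in the degree-one generators admits a decomposition $X = Y + X' + hX''$ with $Y$ well-ordered, $X'$ of degree strictly less than $k$, and $X'' \in B_h$, the entire task reduces to establishing this decomposition for all $k$ from the degree-two case alone.

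The base case $k=2$ is essentially tautological. If $X_1 \le X_2$ in the fixed order there is nothing to prove; if $X_1 > X_2$ the hypothesis rewrites as $X_1 X_2 = X_2 X_1 + X' + h X''$, where $X_2 X_1$ plays the role of the ordered $Y$, the term $X'$ is either a generator or zero (hence of degree at most $1 < 2$), and $h X''$ is $h$ times an element of $B_h$.

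For $k \ge 3$ I would run a double induction, outer on the degree $k$ and inner on the number of adjacent inversions of $X_{n_1} \cdots X_{n_k}$. If no inversions remain, the monomial is already ordered. Otherwise, locate an adjacent inversion $X_{n_i} X_{n_{i+1}}$ and substitute the degree-two identity, obtaining three contributions: a degree-$k$ monomial with one fewer inversion, treatable by the inner induction; a degree-$(k-1)$ monomial in which the pair $X_{n_i} X_{n_{i+1}}$ has been replaced by the single generator $X'$ (or by $0$), treatable by the outer induction on degree; and a term $h \cdot (X_{n_1} \cdots X_{n_{i-1}} X'' X_{n_{i+2}} \cdots X_{n_k})$, which slots directly into the $hX''$ remainder since $B_h$ is a subalgebra. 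Aggregating the pieces produces a decomposition of the required form.

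The main delicate point will be the bookkeeping of the accumulated lower-degree and $h$-remainders across the double induction, ensuring that what is eventually called $X'$ in the global decomposition really is of degree strictly less than $k$, and that what is called $X''$ really lies in $B_h$. This is structural rather than computational, once one observes that both \emph{lower degree} and \emph{element of $B_h$} are properties preserved by the linear combinations and products that arise during the sorting process; no calculation with specific generators is needed. With this in place, Proposition \ref{PropQ} applies and yields $B = B_h/hB_h \cong S(\goh)$, which by Poincar\'e--Birkhoff--Witt provides the claimed isomorphism.
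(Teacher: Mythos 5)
Your proposal is correct and follows the same route as the paper: the paper's remark after Proposition \ref{PropQ} states precisely that the degree-two case suffices and extends to higher degrees ``by permuting the elements two by two,'' mirroring the Poincar\'e--Birkhoff--Witt argument, which is exactly your double induction on degree and adjacent inversions. You have merely made explicit the bookkeeping (lower-degree terms via the outer induction, $h$-remainders absorbed since $B_h$ is a subalgebra) that the paper leaves implicit.
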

With this we can prove that $B_h$ is a left coideal of $U_h(\gog)$ and that $B=B_h/hB_h$ is isomorphic to $U(\goh)$ by using  corollary \ref{LemQ} and Poincare Birkhoff Witt theorem. Meaning that $B$ is isomorphic to $S(\goh)$ therefore $B$ is isomorphic as a vector space to $U(\goh)$ and so we have proved that $B_h / h B_h = (B,\Delta,\mu,S) = (U(\goh),\Delta,\mu,S)$.\newline

To work in the semi-simple complex Lie bialgebra, we need to consider the quantization of Jimbo and V. Drinfeld, which give rise to the $U_q(\gog)$.
\begin{Def}
Let $\gog$ be a finite-dimensional complex semi-simple Lie algebra with cartan matrix ($a_{ij}$). Then $U_q(\gog)$ is the associative algebra over $\mathbb{Q}(q)$ with generators $X_i^+$, $X_i^-$, $K_i$ and $K_i^{-1}$, $1 \le i \le n$, and the following relations:
\[ K_iK_j = K_jK_i, K_iK_i^{-1} = K_i^{-1}K_i=1, \]
\[ (1) \crc{K_i,X_i^+}_{d_i*a_{ij}} = K_iX_i^+ - q^{d_i*a_{ij}}X_i^+K_i = 0 , \ \ \crc{K_i,X_i^-}_{-d_i*a_{ij}} = K_iX_i^- - q^{-d_i*a_{ij}}X_i^+K_i = 0 ,\]
\[ (b) \crc{X_i^+,X_j^-} =  \partial_{ij} \frac{K_i - K_i^{-1}}{q^{d_i} -q^{-d_i}} , \]
\[ (c) \sum_{r=0}^{1-a_{ij}} (-1)^r\left[\stackrel{1-a_{ij}}{r}\right]_d^{d_i} (X^{\pm})^{1-a_{ij}-r}X_j^{\pm}(X_i^{\pm})^r = 0 \ \ \ if \ i \neq j \]
There is a Hopf Algebra structure on $U_q(\gog)$ with commultiplication $\Delta$ defined as follow:
\[ \Delta(K_i) = K_i \otimes K_i,\]
\[ \Delta(X_i^{+})= X_i^{+} \otimes K_i + 1 \otimes X_i^{+}, \ \ \Delta(X_i^{-})= X_i^{+} \otimes 1 + K_i^{-1} \otimes X_i^{-} \]
We also set for the rest of the paper, the following notation:
\[ \crc{A,B}_{q^k} = AB - q^k BA \]
\end{Def}
We can and we will in the following section identify $U_q(\gog)$ with $U_h(\gog)$. It is done by identifying $q$ with $e^{h}$ and $K_i$ with $e^{d_i h H_i}$. \newline
We need also to set some notations: 
\begin{itemize} 
\item We will note the $q^a$-bracket of two elements A, B by 
\[  \crc{A,B}_{q^a} = AB - q^a BA \]
\item We will say that two elements A and B $q^a$-commute if $\crc{A,B}_{q^a} = 0$
\end{itemize}

\begin{Rem}
In the following sections, we will only consider the coisotropic subalgebras $\cro{e_{\beta}}{\pi}^{\#}\gog^*$ as the demonstration for $\cro{f_{\beta}}{\pi}^{\#}\gog^*$ is identical. The only change is that the candidate $B_h$ will no longer be a left coideal like for the previous case but a right coideal.
\end{Rem}

\section{$\gosl(n+1)$}

Let $\gog = \gosl(n+1)$ with Cartan subalgebra given by the diagonal matrices. The roots' set of $\gog$ is $\{ L_i - L_j \} _{ \left( i \neq j \right) } \subset \mathbb{R}^{n+1}$. Following the protocol, we have to check which roots satisfy the assumption of proposition \ref{pCo}. It is easy to check that all the roots do.\newline
Next we have to determine the r-matrix needed in the construction.
\[ \pi = \sum_{\alpha \in R^+}{\lambda_{\alpha} e_{\alpha} \wedge f_{\alpha}} \]
for the root $\alpha = L_i - L_j$ we have the vector $e_{\alpha} = e_{ij}$ and $f_{\alpha} = e_{ji}$. Therefore we can compute the r-matrix $\pi$:
\[\pi = \lambda \sum_{i < j} e_{i,j} \wedge e_{j,i}\]
where $\lambda$ is a non-zero real number. Let's fix a root $\beta = L_i - L_j$ which satisfy the assumption, a computation shows that:
\[ \cro{e_{\beta}}{\pi} = \lambda \left( 2 \sum_{i < k < j } e_{i,k} \wedge e_{k,j} - e_{i,j}\wedge ( h_i + h_{i+1} + \cdots +h_n) \right) \]
where $\{h_i=e_{i,i} - e_{i+1,i+1}\}_{1 \le i \le n}$  is the basis of the cartan subalgebra.
The coisotropic subalgebra thus obtained in $\gog$ is spanned by
\[ h_i + h_{i+1} + \cdots +h_n  ,\ e_{ij} ,\ \{ e_{kj} , e_{ik} \}_{i < k < j } \]
We will now restrict ourself without loose of generality in the case $i=1$ and $j=n$, and taking the chevalley generators, we obtain the coisotropic subalgebra $\goh$ spanned by:
\begin{align*} &h_1+h_{2}+ \cdots +h_n  ,\ e_1,\ \crc{e_1 , e_2},\ \crc{\crc{e_1,e_2},e_3}, \ \dots \ \crc{\crc{e_1,e_2}\dots,e_n} \\ 
                         &  e_n , \ \crc{e_n,e_{n-1}},\ \crc{\crc{e_n,e_{n-1}},e_{n-2}}, \ \dots  \ \crc{\crc{e_n,e_{n-1}}\dots,e_2} \end{align*}
\begin{equation*} \begin{pmatrix} a_0 & a_1 &\cdots & a_{n\text{-}1} & a_{n}   \\
                                   & 0     &  \cdots & 0        & b_{n\text{-}1}    \\
                                   &        & \ddots &  \vdots&\vdots  \\
                                   &       &              & 0        & b_{1} \\
                                  &        &              &           & \text{-}a_0 \end{pmatrix} \end{equation*}

We need to find a suitable candidate for the quantization. One way to proceed is to first take the subalgebra generated by
\begin{align*} & K_1K_2 \cdots K_n ,\ E_1,\ \crc{E_1,E_2},\ \crc{\crc{E_1,E_2},E_3},\ \dots \ \crc{\crc{E_1,E_2}\dots,E_n}  \\
& E_n, \ \crc{E_n,E_{n-1}},\ \crc{\crc{E_n,E_{n-1}},E_{n-2}}, \  \dots \ \crc{\crc{E_n,E_{n-1}}\dots,E_2} \end{align*}
but this subalgebra is not a coideal of $U_q(\gosl(n+1))$ therefore we need to change the generator a little. In fact we only need to change the power of the bracket to make it a coideal. We mean by that to take $\cro{E_1}{E_2}_q$. Let's proceed elements by elements. It is easy to see that
\begin{align*}  \Delta(K_1 \cdots K_n) =& \Delta(K_1)\Delta(K_2) \cdots \Delta(K_n) = K_1 \cdots K_n \otimes K_1 \cdots K_n \in A \otimes U_q(\gog) \\
 \Delta(E_1) =& E_1 \otimes K_1 + 1 \otimes E_1 \in B_h \otimes U_q(\gog) \end{align*}
therefore we do not need to change those. But for 
\[ \Delta( \cro{E_1}{E_2}) = \cro{E_1}{E_2} \otimes K_1K_2 + E_1 \otimes \cro{K_1}{E_2} + E_2 \otimes \cro{E_1}{K_2} + 1 \otimes \cro{E_1}{E_2} \]
there is one term that do not satisfy the condition here. We want this term to disapear. We want $\cro{E_1}{K_2} = 0$ but this is not true, but it is true for $\cro{E_1}{K_2}_q$.

\begin{Prop} 
For all $i \le n$ with $a_{k,k+1} = -1$ for all $k \in \{1,\dots ,i\}$, let's denote $X_i = \croq{\croq{E_1}{E_2}}{\dots , E_i}$  we have:
 \begin{align*} \Delta( X_i) =& 1 \otimes X_i  + E_1 \otimes \croq{\croq{K_1}{E_2}}{\dots , E_i} +  X_2 \otimes \croq{\croq{K_1K_2}{E_3}}{\dots , E_i} \\ & +
X_3 \otimes \croq{\croq{K_1K_2K_3}{E_4}}{\dots , E_i}  + \cdots + X_i \otimes K_1 \cdots K_i \end{align*}
\end{Prop}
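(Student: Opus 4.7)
The plan is to induct on $i$, starting with the base case $i=2$, which is a direct computation. Writing $\Delta(X_2) = \Delta(E_1)\Delta(E_2) - q\Delta(E_2)\Delta(E_1)$ and expanding via $\Delta(E_j) = E_j \otimes K_j + 1 \otimes E_j$, one uses the Jimbo relations $K_1 E_2 = q^{-1} E_2 K_1$ and $K_2 E_1 = q^{-1} E_1 K_2$ (consequences of $a_{12} = a_{21} = -1$) to see that the would-be cross term $E_2 \otimes (E_1 K_2 - q K_2 E_1)$ vanishes, leaving
\[ \Delta(X_2) = X_2 \otimes K_1 K_2 + E_1 \otimes \croq{K_1}{E_2} + 1 \otimes X_2, \]
which matches the claimed formula.

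For the inductive step, abbreviate the formula as $\Delta(X_i) = \sum_{k=0}^{i} X_k \otimes Z_{k,i}$ with the conventions $X_0 = 1$, $Z_{0,i} = X_i$, $Z_{i,i} = K_1 \cdots K_i$, and $Z_{k,i} = \croq{\croq{K_1 \cdots K_k}{E_{k+1}}}{\dots, E_i}$ for $0<k<i$. Multiplying in $U_q(\gog) \otimes U_q(\gog)$ using $\Delta(E_{i+1}) = E_{i+1} \otimes K_{i+1} + 1 \otimes E_{i+1}$, one rewrites
\[ \Delta(X_{i+1}) = \sum_k \bigl(X_k E_{i+1} \otimes Z_{k,i} K_{i+1} - q E_{i+1} X_k \otimes K_{i+1} Z_{k,i}\bigr) + \sum_k X_k \otimes \croq{Z_{k,i}}{E_{i+1}}. \]
The second sum immediately supplies $\sum_{k=0}^{i} X_k \otimes Z_{k,i+1}$ with the extended convention $Z_{k,i+1} = \croq{Z_{k,i}}{E_{i+1}}$ for $k\le i$; it remains to show that the first sum collapses to the single term $X_{i+1} \otimes K_1 \cdots K_{i+1}$, which will supply the missing $k = i+1$ contribution.

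This is where the heart of the argument lies, via two weight computations. For $k < i$, the element $X_k$ commutes with $E_{i+1}$ by the $q$-Serre relations, since $X_k$ is built from $E_1, \ldots, E_k$ and $a_{j, i+1} = 0$ whenever $j \le i-1$. Moreover, among the generators appearing in $Z_{k,i}$, only $E_i$ fails to commute with $K_{i+1}$ (and obeys $K_{i+1} E_i = q^{-1} E_i K_{i+1}$), while $K_1 \cdots K_k$ commutes with $K_{i+1}$; a short induction on the depth of the nested bracket then yields $Z_{k,i} K_{i+1} = q K_{i+1} Z_{k,i}$, so each $k<i$ summand of the first sum vanishes. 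For $k = i$, the coefficient $Z_{i,i} = K_1 \cdots K_i$ commutes with $K_{i+1}$ and factors out, giving $(X_i E_{i+1} - q E_{i+1} X_i) \otimes K_1 \cdots K_{i+1} = X_{i+1} \otimes K_1 \cdots K_{i+1}$, as required. The main obstacle is thus the bookkeeping for the $K_{i+1}$-weight of the nested $q$-brackets $Z_{k,i}$, and it is precisely at this stage that the hypothesis $a_{k,k+1} = -1$ on the Cartan matrix enters essentially, since it is what forces each intermediate commutator to preserve the expected weight under $\text{ad}(K_{i+1})$.
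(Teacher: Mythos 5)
Your proof is correct and follows essentially the same route as the paper, which dispatches this proposition with the single remark that ``the proof is done with an easy induction'': your argument is precisely that induction carried out explicitly, with the base case $i=2$ checked directly and the inductive step reduced to the commutation $\cro{X_k}{E_{i+1}}=0$ for $k<i$ together with the weight relation $Z_{k,i}K_{i+1}=qK_{i+1}Z_{k,i}$ (coming from $K_{i+1}E_iK_{i+1}^{-1}=q^{-1}E_i$ and $K_{i+1}E_jK_{i+1}^{-1}=E_j$ for $j\le i-1$), which kills every cross term except $X_{i+1}\otimes K_1\cdots K_{i+1}$. The computations all check out under the paper's conventions $\Delta(E_j)=E_j\otimes K_j+1\otimes E_j$ and $\croq{A}{B}=AB-qBA$, so there is nothing to add.
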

The proof is done with an easy induction. By using this proposition, we have a suitable candidate for the quantization of $\goh$.  We note $B_h$ the subalgebra of $U_q(\gosl(n+1))$ generated by
\begin{align*} & K_1K_2 \cdots K_n ,\ E_1,\ \cro{E_1}{E_2}_q,\ \cro{\cro{E_1}{E_2}_q}{E_3}_q,\ \dots \cro{\cro{E_1}{E_2}_q}{\dots,E_n}_q  \\
 & E_n, \ \cro{E_n}{E_{n-1}}_q,\ \cro{\cro{E_n}{E_{n-1}}_q}{E_{n-2}}_q, \  \dots \cro{\cro{E_n}{E_{n-1}}_q}{\dots,E_2}_q \end{align*}
\begin{Prop} 
The subalgebra $B_h$ is a left coideal of the bialgebra $U_q(\goso(2n))$.
\end{Prop}
We construct $B_h$ to fullfill this condition. All we need to prove now is that $B_h$ is a flat deformation of $U(\goh)$ to prove that it is indeed a quantization of $\goh$. This proof is mainly computational.
\begin{Theo} 
$B_h$ is a quantization of $\goh$.
\end{Theo}
We will detail some of the computation as it is use in all the following computations.
Using corollary \ref{LemQ} we need to prove that for all generators $A_1$, $A_2$ we have $A_1A_2 - A_2A_1 = A' + hB$ where $A'$ is either a generator or 0 and B is in $B_h$. \newline
By computation, we will prove that this assertion is true. But we will only develop the non trivial computation. First, we will give some shortcuts:
\begin{Lem} \label{Astuce}
If $\cro{A}{B}_{q^a} = \cro{A}{C}_{q^b} = 0$ then $\cro{A}{\cro{B}{C}_{q^c}}_{q^{a+b}} = 0$ for all $a,b,c \in \mathbb{Z}$. \newline
If $\cro{A}{C}_{q^a} = \cro{B}{C}_{q^b} = 0$ then $\cro{\cro{A}{B}_{q^c}}{C}_{q^{a+b}} = 0$ for all $a,b,c \in \mathbb{Z}$.
\end{Lem}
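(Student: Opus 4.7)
The lemma is a direct computation in two symmetric halves; the plan is simply to expand the outer $q$-bracket, use each hypothesis to commute $A$ (respectively $C$) past the two other letters, and observe that all four resulting monomials cancel in pairs with the correct $q$-power $q^{a+b}$ in front.

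For the first statement, I would write
\begin{align*}
\crc{A,\cro{B}{C}_{q^c}}_{q^{a+b}} &= A(BC - q^c CB) - q^{a+b}(BC - q^c CB)A \\
&= ABC - q^c ACB - q^{a+b} BCA + q^{a+b+c} CBA.
\end{align*}
Applying $AB = q^a BA$ and $AC = q^b CA$ in that order yields $ABC = q^a BAC = q^{a+b} BCA$, and similarly $ACB = q^b CAB = q^{a+b} CBA$. Substituting these two identities makes the first term cancel the third and the second cancel the fourth, so the left-hand side is $0$.

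The second statement is handled by the mirror argument: expand
\[
\crc{\cro{A}{B}_{q^c},C}_{q^{a+b}} = (AB - q^c BA)C - q^{a+b} C(AB - q^c BA),
\]
and use $AC = q^a CA$ and $BC = q^b CB$ to push $C$ to the left through both $A$ and $B$, obtaining $ABC = q^{a+b} CAB$ and $BAC = q^{a+b} CBA$. The same pairwise cancellation produces $0$.

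There is no real obstacle here; the content is purely bookkeeping of the exponents, and the key observation is that the exponent $q^{a+b}$ on the outer bracket is precisely what is required to absorb the two commutations of $A$ (resp.\ $C$) past the two remaining factors, regardless of the inner exponent $c$.
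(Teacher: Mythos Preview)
Your proof is correct; the expansion and pairwise cancellations are exactly right. The paper itself states this lemma without proof, treating it as an immediate computation, so your direct verification is precisely the intended justification.
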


let's denote $X_{j} = \cro{\cro{E_1}{E_2}_q}{\dots ,E_{j}}_q$. \newline 

$\bullet$ $A_1,A_2 \in ((1),(1))$: we can set $A_1 = X_j$ and $A_2 = X_{j+k}$, $j,k \in \mathbb{N}, \ \ j+k \le n.$.
We will use the following lemma:
\begin{Lem}
For all integers j,k such that $j + k \le n$ we have that $\cro{X_j}{X_{j+k}}_{q^{-1}} = 0$
\end{Lem}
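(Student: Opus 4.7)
The plan is to induct on $k \ge 1$, with the base case $k = 1$ handled by a secondary induction on $j$.

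For the inductive step $k \ge 2$, I would write $X_{j+k} = \cro{X_{j+k-1}}{E_{j+k}}_q$. Since $X_j$ is a nested $q$-bracket in $E_1, \ldots, E_j$ and the index $j+k$ differs from every $i \le j$ by at least $2$, the ordinary commutator $\cro{X_j}{E_{j+k}}$ vanishes. Combined with the inductive hypothesis $\cro{X_j}{X_{j+k-1}}_{q^{-1}} = 0$, Lemma \ref{Astuce} applied with exponents $a = -1$ and $b = 0$ then delivers $\cro{X_j}{X_{j+k}}_{q^{-1}} = 0$.

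The delicate part will be the base case $k = 1$. Expanding $X_{j+1} = \cro{X_j}{E_{j+1}}_q$, the assertion $\cro{X_j}{X_{j+1}}_{q^{-1}} = 0$ unfolds into the ``composed'' quantum Serre relation
\[
X_j^2 E_{j+1} - (q + q^{-1}) X_j E_{j+1} X_j + E_{j+1} X_j^2 = 0.
\]
I would treat this by a secondary induction on $j$. The base $j = 1$ is precisely the quantum Serre relation between $E_1$ and $E_2$. For $j \ge 2$, I would invoke the $q$-Jacobi identity $\cro{\cro{A}{B}_q}{C}_q = \cro{A}{\cro{B}{C}_q}_q$ (valid whenever $\cro{A}{C} = 0$, as a direct expansion shows) with $A = X_{j-1}$, $B = E_j$, $C = E_{j+1}$: since $X_{j-1}$ is a polynomial in $E_1, \ldots, E_{j-1}$ it commutes with $E_{j+1}$, so $X_{j+1}$ rewrites as $\cro{X_{j-1}}{\cro{E_j}{E_{j+1}}_q}_q$. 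The secondary inductive hypothesis $\cro{X_{j-1}}{X_j}_{q^{-1}} = 0$ combined with Lemma \ref{Astuce} should then let me collapse $\cro{X_j}{X_{j+1}}_{q^{-1}}$ to an expression whose remaining commutators are all governed by the ordinary Serre relations among the adjacent Chevalley generators $E_{j-1}, E_j, E_{j+1}$.

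The main obstacle will be exactly this base case $k = 1$: a direct one-shot application of Lemma \ref{Astuce} is not available, since $\cro{X_j}{X_j}_{q^a}$ vanishes only for $a = 0$ while $\cro{X_j}{E_{j+1}}_{q^{-1}}$ is nonzero. One is therefore forced to unwind the nested brackets back down to the Chevalley level and iterate the Serre relation, and carrying out this bookkeeping cleanly is the genuinely computational step of the proof.
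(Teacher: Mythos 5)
Your overall architecture is close to the paper's: the reduction of $k \ge 2$ to $k-1$ via $\cro{X_j}{E_{j+k}} = 0$ and Lemma \ref{Astuce} is exactly how the paper extends the consecutive case, and you correctly locate all the difficulty at $k=1$. But your sketch of the $k=1$ inductive step has a genuine gap, and the mechanism you propose would fail as stated. After rewriting $X_{j+1} = \cro{X_{j-1}}{\cro{E_j}{E_{j+1}}_q}_q$, no application of Lemma \ref{Astuce} can ``collapse'' $\cro{X_j}{X_{j+1}}_{q^{-1}}$ using the hypothesis $\cro{X_{j-1}}{X_j}_{q^{-1}} = 0$: the lemma would also require $X_j$ to $q^b$-commute with the cluster $\cro{E_j}{E_{j+1}}_q$ for some power $b$, and this is false for every $b$. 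Already in $U_q(\gosl(4))$ a direct computation with the two Serre relations gives
\[ \cro{\cro{E_1}{E_2}_q}{\cro{E_2}{E_3}_q} = (q^{-1}-q)\, E_2 X_3, \qquad X_3 = \cro{\cro{E_1}{E_2}_q}{E_3}_q, \]
a nonzero element (the Levendorskii--Soibelman straightening term), and adding $(1-q^b)\cro{E_2}{E_3}_q X_2$ for $b \neq 0$ cannot kill it, since that contribution has a nonzero component on the ordered monomial $X_2\cro{E_2}{E_3}_q$ which $E_2X_3$ lacks. So the ``remaining commutators'' in your collapse are not all governed by Serre relations among $E_{j-1}, E_j, E_{j+1}$: one of them genuinely does not vanish.

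The paper's proof avoids this by splitting the \emph{left} factor instead of the right: write $X_j = \cro{X_{j-1}}{E_j}_q$ and apply the second part of Lemma \ref{Astuce} with $C = X_{j+1}$, reducing the step to (i) $\cro{X_{j-1}}{X_{j+1}}_{q^{-1}} = 0$ and (ii) $\cro{E_j}{X_{j+1}} = 0$. Point (i) is a $k=2$ instance one level down, so the induction must carry all $k$ simultaneously rather than settling $k=1$ for all $j$ first; your ordering is repairable (it follows from $\cro{X_{j-1}}{X_j}_{q^{-1}}=0$ by your own $k \ge 2$ reduction), but it must be said. Point (ii) is the missing idea in your proposal: pulling out $X_{j-2}$, which commutes with $E_j$, one reduces to the three-generator identity $\cro{\cro{\cro{E_{j-1}}{E_j}_q}{E_{j+1}}_q}{E_j} = 0$, isolated in the paper as Lemma \ref{ijkj} and proved there by expressing the expanded bracket as an explicit linear combination of the two quantum Serre relations (a small linear system). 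Note that it is precisely this identity which makes the displayed counterexample above equal $(1-q^2)q^{-1}E_2X_3$ rather than zero: what vanishes is the commutator of $E_j$ with the triple $\cro{\cro{E_{j-1}}{E_j}_q}{E_{j+1}}_q$, not any $q$-power bracket of the whole $X_j$ against the pair $\cro{E_j}{E_{j+1}}_q$. Your ``composed Serre relation'' for $(X_j, E_{j+1})$ is indeed a correct reformulation of the statement to be proved, but it is the conclusion of this computation, not a tool available to you; without Lemma \ref{ijkj} or an equivalent, your induction does not close.
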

\begin{proof}
Let's prove this lemma by induction on j.
It is easy to see that $\cro{E_1}{\cro{E_1}{E_2}_q}_{q^{-1}} = 0$ as it is the Quantum Serre relation.
By using the fact that $\cro{E_1}{E_3} = 0$ and the lemma \ref{Astuce}, we have that
\[ \cro{E_1}{\cro{\cro{E_1}{E_2}_q}{E_3}_q}_{q^{-1}} = 0.   \]
This can be extended to prove that $\cro{E_1}{X_j}_{q^{-1}} = 0$. \newline
If it hold for $j$, let's prove that it still hold for $j+1$.
\[ \cro{X_j}{X_{j+1}}_{q^{-1}} = \cro{\cro{X_{j-1}}{E_j}_q}{X_{j+1}}_{q^{-1}} \]
we have that $\cro{X_{j-1}}{X_{j+1}}_{q^{-1}} = 0$ by using the induction hypothesis. \newline
Furthermore, we can prove that  $\cro{E_j}{X_{j+1}} = 0$ .
\[ \begin{array}{rcl}
 \cro{E_j}{X_{j+1}} &=& \cro{E_j}{\cro{\cro{\cro{X_{j-2}}{E_{j-1}}_q}{E_j}_q}{E_{j+1}}_q} \\
                    &=& \cro{E_j}{\cro{X_{j-2}}{\cro{\cro{E_{j-1}}{E_j}_q}{E_{j+1}}_q}_q} \end{array} \]
We only need to prove that  $\cro{E_j}{\cro{\cro{E_{j-1}}{E_j}_q}{E_{j+1}}_q}=0$ because the rest is a consequence of lemma \ref{Astuce}.
\begin{Lem} \label{ijkj}
$E_j$ commutes with $\cro{\cro{E_i}{E_j}_q}{E_k}_q$ if we have $a_{ij} = a_{jk} = -1$ and $a_{ik}=0$ .
\end{Lem}
\begin{proof}
We will prove this lemma by computation.
\begin{eqnarray*}  \cro{\cro{\cro{E_i}{E_j}_q}{E_k}_q}{E_j} &=& E_iE_jE_kE_j - q E_kE_iE_jE_j - q E_jE_iE_kE_j + q^2 E_kE_jE_iE_j \\ & &- E_jE_iE_jE_k  + q E_jE_kE_iE_j + q E_jE_jE_iE_k - q^2 E_jE_kE_jE_i \end{eqnarray*}
and using the two Serre relations:
\[ E_iE_jE_j - (q + q^{-1})E_jE_iE_j + E_jE_jE_i = 0 \ \ (R_i) \]
and the other one by replacing $E_i$ with $E_k$.
\[ E_kE_jE_j - (q + q^{-1})E_jE_kE_j + E_jE_jE_k = 0 \ \ (R_k) \]
Using this relation, we can identify term by term to prove that
\[ \cro{\cro{\cro{E_i}{E_j}_q}{E_k}_q}{E_j}  = a*R_iE_k + b*E_kR_i + c* E_iR_k + d*R_kE_i\]
we find a linear system, that we solve:
\[ a = -\frac{1}{q+q^{-1}} , \  b = \frac{q^2}{q+q^{-1}}, \ c = \frac{1}{q+q^{-1}}, \ d = - \frac{q^2}{q+q^{-1}} .  \]
Therefore $\cro{\cro{\cro{E_i}{E_j}_q}{E_k}_q}{E_j} = 0$.
\end{proof}
Then, by using the lemmas \ref{ijkj} and \ref{Astuce}, we have that $\cro{X_j}{X_{j+1}}_{q^{-1}} = 0$. And we can extend this results to $\cro{X_j}{X_{j+k}}_{q^{-1}} = 0$.
\end{proof}

$\bullet$  $A_1,A_2 \in ((1),(b))$: we can set $A_1 = X_j$ and $A_2 = D_k = \cro{\cro{E_n}{E_{n-1}}_q}{\dots ,E_{k}}_q$. \newline
If $k \ge j + 2$ then $X_j$ and $D_k$ commute. \newline
If $k = j+1$, we can do an induction:
 \begin{align*}  \cro{X_{n-1}}{D_n}_q = &X_n \\
                        \cro{X_{n-2}}{D_{n-1}}_q =& \cro{X_{n-2}}{\cro{D_n}{E_{n-1}}_q}_q \\
                                              =& \cro{D_n}{\cro{X_{n-2}}{E_{n-1}}_q}_q \\
                                              =& - \cro{X_{n-1}}{D_n}_q + (1-q)(X_{n-1}D_n + D_nX_{n-1}) \\
                                              =& - X_n  + (1-q)(X_{n-1}D_n + D_nX_{n-1})                \end{align*} 
this is done by using the following lemmas.
\begin{Lem}
If A and B commute than $\cro{A}{\cro{B}{C}_q}_q = \cro{B}{\cro{A}{C}_q}_q$.
\end{Lem}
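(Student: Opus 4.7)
The plan is to prove the identity by direct expansion using only the definition $\cro{X}{Y}_q = XY - qYX$ and the hypothesis $AB = BA$. This is a short computation, not requiring any deeper structure of $U_q(\gog)$, so the statement should follow purely formally from associativity in an arbitrary algebra containing three elements with $A$ and $B$ commuting.

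Concretely, I would first expand the left-hand side by applying the definition of the $q$-bracket twice:
\[ \cro{A}{\cro{B}{C}_q}_q = A(BC - qCB) - q(BC - qCB)A = ABC - qACB - qBCA + q^2 CBA. \]
Then I would expand the right-hand side in exactly the same way:
\[ \cro{B}{\cro{A}{C}_q}_q = BAC - qBCA - qACB + q^2 CAB. \]
Comparing the four monomials on each side, three of them ($-qACB$, $-qBCA$, and one of the cubic terms) already match literally. The remaining comparison is between $ABC$ and $BAC$, and between $q^2 CBA$ and $q^2 CAB$; both pairs coincide precisely because $AB = BA$ (for the first pair, left-multiplying $AB = BA$ by nothing and right-multiplying by $C$; for the second pair, right-multiplying $AB = BA$ by nothing and left-multiplying by $C$).

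There is no real obstacle: the identity is a formal consequence of $AB = BA$ and does not use the Serre relations or any specific feature of the Chevalley generators. The only minor point to notice is that the $q$-bracket is not antisymmetric in its two arguments, so one cannot replace the computation by a one-line symmetry argument; one genuinely has to expand both sides and match the eight monomials. The result will be used in the surrounding induction to swap the roles of $X_{n-2}$ and $D_n$ in expressions of the form $\cro{X_{n-2}}{\cro{D_n}{E_{n-1}}_q}_q$, which is exactly the setting $A = X_{n-2}$, $B = D_n$, $C = E_{n-1}$, where $A$ and $B$ commute because they involve disjoint index ranges.
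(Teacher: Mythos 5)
Your proof is correct and coincides with what the paper intends: the lemma is stated there without proof, and the direct expansion of both sides of $\cro{A}{\cro{B}{C}_q}_q = \cro{B}{\cro{A}{C}_q}_q$ via $\cro{X}{Y}_q = XY - qYX$, followed by using $AB=BA$ on the pairs $ABC,BAC$ and $CBA,CAB$, is exactly the intended one-step computation. One trivial slip in your prose: only the two middle monomials $-qACB$ and $-qBCA$ match literally (not three), but your displayed expansions and the conclusion are right, as is your remark that the $q$-bracket's lack of antisymmetry forces the full expansion and that the application in the paper takes $A = X_{n-2}$, $B = D_n$, $C = E_{n-1}$ with $A,B$ commuting for index reasons.
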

\begin{Lem} $ \cro{A}{B}_q = - \cro{B}{A}_q +(1-q)(AB + BA) $
\end{Lem}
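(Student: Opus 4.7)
The statement is a purely formal identity about the $q$-bracket $\croq{A}{B} = AB - qBA$, so my plan is to verify it by direct expansion of the two sides, with no use of any structural facts about $U_q(\gog)$ or of the surrounding constructions. No induction, no Serre relation, no coproduct computation is required; the identity holds in any associative algebra over $\mathbb{Q}(q)$.

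Concretely, I would start from the definition $\croq{A}{B} = AB - qBA$ and compute the right-hand side: $-\croq{B}{A} = -(BA - qAB) = qAB - BA$. Adding $(1-q)(AB+BA) = AB + BA - qAB - qBA$ to this yields
\[ qAB - BA + AB + BA - qAB - qBA = AB - qBA, \]
which is exactly $\croq{A}{B}$. This closes the proof.

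The only thing worth flagging is consistency of signs and coefficients of $q$ in the expansion; there is no real obstacle. In the paper the identity is used to rewrite $\croq{A}{B}$ as $-\croq{B}{A}$ plus a symmetric correction term, so I would present it in exactly the form stated, without additional commentary, since any reader can check the one-line calculation above.
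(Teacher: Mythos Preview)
Your proof is correct; the paper states this lemma without proof, treating it as an immediate algebraic identity, which is precisely what your one-line expansion shows.
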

We can reiterate this process for $X_{k}$ and $D_{k+1}$.
 \begin{align*}  \cro{X_{k}}{D_{k+1}}_{q} =& \cro{X_{k}}{\cro{D_{k+2}}{E_{k+1}}_q}_q \\
                                             =& \cro{D_{k+2}}{\cro{X_{k}}{E_{k+1}}_q}_q \\
                                             =& - \cro{X_{k+1}}{D_{k+2}}_q + (1-q)(X_{k+1}D_{k+2} + D_{k+2}X_{k+1}) \\
                                             =& (-1)^{n-k+1} \left( X_n - (1-q)\left(\sum_{j=k}^{n-2} (-1)^{j+1} (X_{j+1}D_{j+2} + D_{j+2}X_{j+1})\right)\right)     \end{align*}

for $k=j$ we have for $j=n$ that:
 \begin{align*} \cro{X_n}{E_n}_{q^{-1}} =& \cro{\cro{X_{n-2}}{\cro{E_{n-1}}{E_{n}}_q}_q}{E_n}_{q^{-1}} \\
                                              =& \cro{X_{n-2}}{\cro{\cro{E_{n-1}}{E_{n}}_q}{E_n}_{q^{-1}}}_q \ = \  0 \end{align*}
 Now for $j\leq n-1$
\begin{align*}   \cro{X_j}{D_j} =& \cro{\cro{X_{j-1}}{E_j}_q}{\cro{D_{j+1}}{E_j}_q} \\
                                     =& \cro{\cro{X_{j-2}}{\cro{E_{j-1}}{E_j}_q}_q}{\cro{D_{j+2}}{\cro{E_{j+1}}{E_j}_q}_q}\ = \ 0    \end{align*} 
to show that it is zero, it is enough to show $\cro{\cro{E_{j-1}}{E_j}_q}{\cro{E_{j+1}}{E_j}_q} = 0$.
\begin{Lem}
$\cro{E_i}{E_j}_q$ commutes with $\cro{E_k}{E_j}_q$ if we have $a_{ij} = a_{jk} = -1$ and $a_{ik}=0$.
\end{Lem}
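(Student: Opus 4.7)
My plan is to carry out a direct computation in the style of Lemma~\ref{ijkj}, relying only on the two Serre relations attached to the pairs $(i,j)$ and $(k,j)$, together with the commutativity $E_iE_k = E_kE_i$ coming from $a_{ik} = 0$.

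Setting $A = \cro{E_i}{E_j}_q = E_iE_j - qE_jE_i$ and $B = \cro{E_k}{E_j}_q = E_kE_j - qE_jE_k$, I first expand $AB - BA$ as a signed sum of eight degree-four monomials. One pair of cross-terms, namely $-q(E_jE_iE_kE_j - E_jE_kE_iE_j)$, vanishes at once because $E_iE_k = E_kE_i$, and the remaining six terms collect into three differences:
\[
AB - BA \;=\; \bigl(E_iE_jE_kE_j - E_kE_jE_iE_j\bigr) - q\bigl(E_iE_jE_jE_k - E_kE_jE_jE_i\bigr) + q^2\bigl(E_jE_iE_jE_k - E_jE_kE_jE_i\bigr).
\]

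Next I invoke the two Serre relations
\[
E_iE_jE_j - (q+q^{-1})E_jE_iE_j + E_jE_jE_i = 0, \qquad E_kE_jE_j - (q+q^{-1})E_jE_kE_j + E_jE_jE_k = 0,
\]
rewritten as $E_jE_iE_j = \tfrac{1}{q+q^{-1}}(E_iE_jE_j + E_jE_jE_i)$ and analogously for $E_jE_kE_j$. Substituting into the four monomials of the shape $E_j X E_j$ with $X\in\{E_i,E_k\}$ that appear in the first and third brackets, and using $E_iE_k = E_kE_i$ a second time to kill the cross terms of the form $(E_iE_k - E_kE_i)E_jE_j$ and $E_jE_j(E_iE_k - E_kE_i)$, each of those two brackets reduces to $\tfrac{1}{q+q^{-1}}M$, where $M := E_iE_jE_jE_k - E_kE_jE_jE_i$. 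Combining all three contributions yields
\[
AB - BA \;=\; \Bigl(\tfrac{1+q^2}{q+q^{-1}} - q\Bigr)\,M \;=\; 0,
\]
since the scalar coefficient $\tfrac{1+q^2}{q+q^{-1}} - q$ vanishes identically.

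The only real obstacle is the bookkeeping of the twelve length-four monomials produced by the Serre substitutions, but the calculation is short and parallels the "identify a linear combination of Serre relations" strategy used to prove Lemma~\ref{ijkj}; no auxiliary identity beyond the two Serre relations and $E_iE_k = E_kE_i$ is required.
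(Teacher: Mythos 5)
Your proof is correct, and it rests on exactly the same raw ingredients as the paper's: the eight-term expansion of the commutator (which matches the paper's display term for term), the two quantum Serre relations $(R_i)$ and $(R_k)$, and the commutation $E_iE_k=E_kE_i$ coming from $a_{ik}=0$. The difference is purely one of organization. The paper posits the ansatz $\cro{\cro{E_i}{E_j}_q}{\cro{E_k}{E_j}_q}= a\,R_iE_k + b\,E_kR_i + c\,E_iR_k + d\,R_kE_i$ and solves the resulting linear system (finding $a=-\frac{q^2}{q+q^{-1}}$, $b=\frac{1}{q+q^{-1}}$, $c=-\frac{1}{q+q^{-1}}$, $d=\frac{q^2}{q+q^{-1}}$), thereby exhibiting the commutator as an explicit element of the ideal generated by the Serre relations; you instead solve each Serre relation for its middle monomial, $E_jE_iE_j=\frac{1}{q+q^{-1}}\left(E_iE_jE_j+E_jE_jE_i\right)$ and likewise for $E_jE_kE_j$, substitute, and let everything collapse onto the single element $M=E_iE_jE_jE_k-E_kE_jE_jE_i$ with scalar coefficient $\frac{1+q^2}{q+q^{-1}}-q=0$. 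I have checked your grouping and both substituted brackets, and the two computations are consistent with each other (the paper's identification also silently uses $E_iE_k=E_kE_i$, both to cancel the cross terms $-qE_jE_iE_kE_j+qE_jE_kE_iE_j$ and to match the pairs $E_jE_jE_iE_k$, $E_iE_kE_jE_j$ arising from the ansatz). What your route buys is the absence of undetermined coefficients and full visibility of where $a_{ik}=0$ enters; what the paper's route buys is a uniform template that it reuses verbatim for the companion Lemma \ref{ijkj} and for the heavier linear-system computations in the $\goso(2n+1)$ and $G_2$ sections, where a clean substitution is less readily available. The division by $q+q^{-1}$ in your rewriting is harmless, since everything takes place over $\mathbb{Q}(q)$.
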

\begin{proof}
We will prove this lemma by computation.
\begin{align*}  \cro{\cro{E_i}{E_j}_q}{\cro{E_k}{E_j}_q} =& E_iE_jE_kE_j - q E_iE_jE_jE_k - q E_jE_iE_kE_j + q^2 E_jE_iE_jE_k \\ & - E_kE_jE_iE_j  + q E_jE_kE_iE_j + q E_kE_jE_jE_i - q^2 E_jE_kE_jE_i \end{align*}
and using the two Serre relations:
\[ E_iE_jE_j - (q + q^{-1})E_jE_iE_j + E_jE_jE_i = 0 \ \ (R_i) \]
and the other one by replacing $E_i$ with $E_k$.
\[ E_kE_jE_j - (q + q^{-1})E_jE_kE_j + E_jE_jE_k = 0 \ \ (R_k) \]
Using this relation, we can identify term by term to prove that
\[ \cro{\cro{E_i}{E_j}_q}{\cro{E_k}{E_j}_q}  = a*R_iE_k + b*E_kR_i + c* E_iR_k + d*R_kE_i\]
We find a linear system, that we solve:
\[ a = -\frac{q^2}{q+q^{-1}} , \ b = \frac{1}{q+q^{-1}}, \ c = -\frac{1}{q+q^{-1}}, \ d = \frac{q^2}{q+q^{-1}} .  \]
Therefore $\cro{\cro{E_i}{E_j}_q}{\cro{E_k}{E_j}_q} = 0$
\end{proof}
for $k=j-1$ we have
\[ \cro{X_j}{D_{j-1}} = \cro{X_j}{\cro{D_j}{E_{j-1}}_q} = 0 \]
because $\cro{X_{j}}{D_j} = 0$ and $\cro{X_{j}}{E_{j-1}} = 0$ by using the same demonstration as in the lemma \ref{ijkj} .
This can be continued by induction, by decrementing k. \newline
Meaning that for $k < j-1$, we have
\[ \cro{X_j}{D_k} = \cro{X_j}{\cro{D_{k+1}}{E_{k}}_q} = 0 \]
because $\cro{X_{j}}{D_{k+1}} = 0$ and $\cro{X_{j}}{E_{k}} = 0$ by using the same demonstration as in the lemma \ref{ijkj}.  \newline

$\bullet$  $A_1,A_2 \in ((b),(b))$: it is the exact same proof as $\cro{(1)}{(1)}=0$ by reversing the indices. \newline
Finally, we have for all generators E of $B_h$ that there exist $l \in \mathbb{Z}$ such that 
 \[ \cro{\prod_{i=1}^{n} K_i}{E}=(1-q^l) \prod_{i=1}^{n} K_i E \]
Then by using proposition \ref{PropQ}, we can say that deformation is flat.

\section{$\goso(2n)$}

Following the construction, we construct coisotropic subalgebra $\goh$ in $\goso(2n)$ \newline
We consider $\gog$ with Cartan subalgebra given by the diagonal matrices. The roots will be given by $R = \{ \pm L_i \pm L_j \}_{i<j}$. it is easy to see that all the roots satisfy the assumption.
The root space of $\alpha = L_i - L_j$ is given by $e_{\alpha} = x_{i,j} = e_{i,j} - e_{n+j,n+i}$ and $f_\alpha = x{j,i}$, for $\alpha = L_i + L_j$ it is given by $e_{\alpha} = y_{i,j} = e_{i,n+j} - e_{j,n+i}$ and $f_{\alpha}=z_{j,i}=e_{n+j,i} - e_{n+i,j}$. We obtain the r-matrix
\[ \pi = \lambda \sum_{i < j}\left( x_ij \wedge x_ji + y_ij \wedge z_ij \right)   \ \ \  \lambda \in \mathbb{R}^*. \]

$\bullet$ We fix the root $\beta=L_i - L_j$. We then compute the bracket:
\[ \cro{x_{i,j}}{\pi} = \lambda \left( \sum_{i<k<j} x_{i,k} \wedge x_{k,j} +  x_{i,j} \wedge \cro{x_{i,j}}{x_{j,i}} \right) \]
The coisotropic subalgebra $\goh$ that we obtain, for a fixed i and j, in $\gog$ is generated by :
\[ \{ x_{ik} , x_{kj}\}_{i<k <j}, x_{ij}, \cro{x_{i,j}}{x_{j,i}} = h_i + h_{i+1} + \cdots + h_j \]
where $\{h_i = e_ii - e_{i+1,i+1} - e_{n+i,n+i} + e_{n+i+1,n+i+1}, h_n = e_{n,n} - e_{2n,2n}\}_{1\le i \le n-1}$ is the basis of the Cartan subalgebra which is in terms of chevalley generators:
\[h_i + h_{i+1} + \cdots + h_{j-1},\ e_i ,\ \cro{e_i}{e_{i+1}}, \ \cro{\cro{e_i}{e_{i+1}}}{e_{i+2}} , \ \dots \ , \ \cro{\cro{e_i}{e_{i+1}}}{\dots ,e_{j-1}} \]
\[ e_{j-1} ,\ \cro{e_{j-1}}{e_{j-2}}, \ \cro{\cro{e_{j-1}}{e_{j-2}}}{e_{j-3}} , \ \dots \ , \ \cro{\cro{e_{j-1}}{e_{j-2}}}{\dots,e_{i+1}} \]
 This example is the same as the case of $\gosl(n)$. \newline

$\bullet$ We now fix $\beta = L_i + L_j$.
%we then compute the bracket :
%\[ \cro{y_{i,j}}{\pi} = \lambda . \]
The coisotropic subalgebra $\goh$ obtained in $\gog$ is generated by :
\[ \{ x_{i,k} , y_{k,j}\}_{i<k \neq j},\{ x_{j,k} , y_{k,i}\}_{j<k}, y_{i,j},\ \cro{x_{i,j}}{x_{j,i}} = h_i + h_{i+1} + \cdots + h_{j-1} \]
Where $\{h_i = e_ii - e_{i+1,i+1} - e_{n+i,n+i} + e_{n+i+1,n+i+1}, h_n = e_{n,n} - e_{2n,2n}\}_{1\le i \le n-1}$ is the basis of the Cartan subalgebra.
Without loosing any generality one can restrict the study to i=1. But we will distinct two case.  \newline
$\diamond$ If $j=n$ then the cosiotropic subalgebra $\goh$ will be generated in terms of chevalley generators by:
\[h_1 \cdots h_{n-1},\ e_1 ,\ \cro{e_1}{e_2}, \ \cro{\cro{e_1}{e_2}}{e_3} , \ \dots \ , \ \cro{\cro{e_1}{e_2}}{\dots ,e_{n-2}} \]
\[ e_n ,\ \cro{e_n}{e_{n-2}}, \ \cro{\cro{e_n}{e_{n-2}}}{e_{n-3}} , \ \dots \ , \ \cro{\cro{e_n}{e_{n-2}}}{\dots,e_{1}} \] 
\begin{equation*} \small \left( \begin{array}{ccccc|ccccc} 
                     a_0 & a_1 &\cdots & a_{n\text{-}2} & 0        & 0         &     0     & \cdots & 0         &\text{-}b_1 \\
                           & 0     &  \cdots & 0        &0        &  0         &          &            &     0       & \text{-}b_2 \\
                           &        & \ddots &  \vdots &\vdots & \vdots &           &           & \vdots       & \vdots\\
                           &       &              & 0         &   0      & 0         & 0 &  \cdots  &  0        & \text{-}b_{n\text{-}1} \\  
                          &        &              &           & \text{-}a_0    & b_1  & b_2     & \cdots & b_{n\text{-}1} & 0 \\
\hline 
                          &        &             &            &            & \text{-}a_0   &              &            &             &        \\
                          &        &             &            &            & \text{-}a_1   &   0         &            &             &        \\
                           &        &             &            &           & \vdots  &   \vdots &   \ddots &             &        \\
                            &        &             &            &          & \text{-}a_{n\text{-}2}&       0    &    \cdots&   0      &        \\
                             &        &             &            &            & 0        & 0  & \cdots  &    0    & a_0       \\
 \end{array} \right) \end{equation*}
This exemple is once again exactly the same as $\gosl(n+1)$. \newline
$\diamond$ If $j \neq n$ then it will be generated by:
\begin{align*} & \ \   h_1 + \cdots + h_{j-1}, e_1 ,\ \crc{e_1 ,e_2}, \ \crc{\crc{e_1 ,e_2} , e_3}  \dots  ,   \crc{\crc{e_1 ,e_2} \dots,e_{j-2}} \\
  & \ \ e_j ,\ \crc{e_j , e_{j+1}}, \ \crc{\crc{e_j , e_{j+1}} , e_{j+2}} \dots   ,  \crc{\crc{e_j , e_{j+1}}\dots ,e_{n-1}} \\
& \ \ \crc{e_j , t} , \ \crc{\crc{e_j , e_{j+1}} , t}, \
\crc{\crc{\crc{e_j , e_{j+1}} \dots ,e_{n-1}} , t} \\
 & \ \ {}_jx_n =  \crc{\crc{\crc{e_j , e_{j+1}}\dots ,e_{n-2}} , e_n},\  \crc{ {}_jx_n ,e_{n-1}}  \dots  , {}_jy_{j+1} = \crc{\crc{{}_jx_n , e_{n-1}} \dots ,e_{j+1}} \\
 & \ \ \cro{ {}_jx_n}{t}, \ \cro{\cro{ {}_jx_n}{e_{n-1}}}{t}, \dots \ , \ 
\cro{ {}_jy_{j+1}}{t} \\
&  \ \ \cro{ {}_jy_{j+1}}{\cro{e_j}{e_{j-1}}}, \ \dots \ , \cro{\cro{\cro{ {}_jy_{j+1}}{\cro{e_j}{e_{j-1}}}}{e_{j-2}}}{\dots ,e_1} \end{align*}
where $t = x_{j-1} = \cro{\cro{e_1}{e_2}}{\dots ,e_{j-1}}$ is not a generator.

\begin{equation*} \scriptsize  \left(\begin{array}{cccccccc|cccccccc}
a_0& a_1 &\cdots& a_{j\text{-}2}& 0      &  c_j    &\cdots & c_{n\text{-1}}    & 0    &0  &\cdots&  0   & \text{-}f_1 & e_j &\cdots& e_{n\text{-1}}   \\
     & 0     &\cdots& 0        &0      &  0     &\cdots &   0      &0  &     &       &     0      & \text{-}f_2 & 0 & \cdots  & 0    \\
    &        &\ddots&\vdots &\vdots&\vdots&   & \vdots& \vdots&    &     &  \vdots&\vdots&\vdots& & \vdots \\
  &        &          & 0         &   0      & 0       & \cdots &    0  &      0   & 0  &\cdots& 0   &\text{-}f_{j\text{-}1}&  0 &\cdots& 0\\ 
   &        &          &           & \text{-}a_0  & b_j     &  \cdots & b_{n\text{-1}}   & f_1 &f_2& \cdots& f_{j\text{-}1} & 0 & d_j &\cdots& d_{n\text{-}1} \\
   &        &          &           &          &  0       & \cdots & 0       & \text{-}e_j &    0  &\cdots   & 0          & \text{-}d_j& 0 & \cdots & 0 \\
   &        &          &           &          &          &  \ddots&\vdots&\vdots& \vdots &            &\vdots& \vdots&\vdots&  & \vdots \\
 &        &          &           &          &          &                & 0      & \text{-}e_{n\text{-1}}  & 0    &\cdots  & 0          & \text{-}d_{n\text{-1}}   & 0 &\cdots& 0 \\
\hline 
 &        &          &           &          &          &     &     &   \text{-}a_0 & & & & & & & \\
  &        &          &           &          &          &    &     &  \text{-}a_1 & 0 & & & & & & \\
 &        &          &           &          &          &   &     & \vdots&\vdots&\ddots&        &   &  &  &\\
 &        &          &           &          &          &    &     &\text{-}a_{j\text{-}2}& 0       & \cdots& 0    &     &  & & \\
  &        &          &           &          &          &   &     &  0       & 0       & \cdots & 0    & a_0& & &  \\
  &        &          &           &          &          &   &     &\text{-}c_{j}  & 0        & \cdots& 0      &\text{-}b_j& 0 & & \\
  &        &          &           &          &          &   &     & \vdots& \vdots&          &\vdots&\vdots&\vdots &\ddots & \\
  &        &          &           &          &          &   &     & \text{-}c_{n\text{-1}}  & 0& \cdots& 0 & \text{-}b_{n\text{-1}}  & 0& \cdots& 0 \end{array} \right) \end{equation*} 
We now need to choose a candidate for the quantization. following the method that we used for $\gosl(n+1)$, let's consider the following algebra generated by:
\begin{align*} (a) & \ \   K_1\cdots K_{j-1}, E_1 ,\ \crc{E_1 ,E_2}, \ \crc{\crc{E_1 ,E_2} , E_3}  \dots  ,   \crc{\crc{E_1 ,E_2} \dots,E_{j-2}} \\
 (b) & \ \ E_j ,\ \crc{E_j , E_{j+1}}, \ \crc{\crc{E_j , E_{j+1}} , E_{j+2}} \dots   ,  \crc{\crc{E_j , E_{j+1}}\dots ,E_{n-1}} \\
(c) & \ \ \crc{E_j , T} , \ \crc{\crc{E_j , E_{j+1}} , T}, \
\crc{\crc{\crc{E_j , E_{j+1}} \dots ,E_{n-1}} , T} \\
 (d) & \ \ {}_jX_n =  \crc{\crc{\crc{E_j , E_{j+1}}\dots ,E_{n-2}} , E_n},\  \crc{ {}_jX_n ,E_{n-1}}  \dots  , {}_jY_{j+1} = \crc{\crc{{}_jX_n , E_{n-1}} \dots ,E_{j+1}} \\
 (e) & \ \ \cro{ {}_jX_n}{T}, \ \cro{\cro{ {}_jX_n}{E_{n-1}}}{T}, \dots \ , \ 
\cro{ {}_jY_{j+1}}{T} \\
(f) &  \ \ \cro{ {}_jY_{j+1}}{\cro{E_j}{E_{j-1}}}, \ \dots \ , \cro{\cro{\cro{ {}_jY_{j+1}}{\cro{E_j}{E_{j-1}}}}{E_{j-2}}}{\dots ,E_1} \end{align*}

where $T = X_{j-1} = \cro{\cro{E_1}{E_2}}{\dots ,E_{j-1}}$ is not a generator. Each line corresponding to a set of generators. In order for the following computations, to be easier, let's take some notations:
let's denote: 
\begin{equation*} {}_jK_k = \prod_{i=j}^k K_i , \ \ {}_jX_k =\crc{\crc{E_j,E_{j+1}} \dots, E_k}, \ \ {}_jY_k = \cro{\cro{{}_jX_n}{E_{n-1}}}{\dots ,E_{k}} \ \ \  j < k \leq n \end{equation*}
\begin{equation*}
 {}_jY_k = \cro{\cro{\cro{ {}_jY_{j+1}}{\cro{E_j}{E_{j-1}}}}{E_{j-2}}}{\dots ,E_1}  \ \ \ k < j \end{equation*} 

We then want to change this subalgebra in order to make it into a left coideal. Therefore, the braquet in this notation may change depending on the case at hand.\newline 

$\bullet$ It is easy to see that the first two sets of generators are done by the same computation as in $\gosl(n+1)$. Therefore we now consider the two first set of generator with the q-bracket. \newline

$\bullet$ The third set is constituted by the bracket of the second set of generators with the element $T = \cro{\cro{E_1}{E_2}}{\dots ,E_{j-1}}$. The element $\Delta(T)$ can be developed as in $\gosl(n+1)$ meaning that we use the q-brackets. Therefore we now consider $T =\croq{\croq{E_1}{E_2}}{\dots ,E_{j-1}}$  . in $\Delta(T)$, the only term that fail as a coideal is $T \otimes  {}_1K_{j-1}$ Consequently, we only have to check that the bracket of the comultiplication of the second set of generators with this element is in $B \otimes U_q(\goso(2n))$. When computing $\Delta(\cro{{}_jX_k}{T})$, we see that only one term may pose a problem, $\cro{1 \otimes {}_jX_k}{T \otimes {}_1K_{j-1}}$.
But one can check that $\croq{\croq{\croq{E_j}{E_{j+1}}}{\dots,E_{j+k}}}{ {}_1K_{j-1}} = 0$. Therefore, as in the previous case, we only need to take the q-bracket. \newline

$\bullet$ For the fourth set, we can find by computation that we only need to take the q-bracket. 
\begin{Lem} In $U_q(\gog)$ for $k > j$, by taking ${}_jY_{k}= \croq{\croq{{}_jX_n}{E_{n-1}}}{\dots,E_{k}}$, we have that $\Delta({}_jY_k) \in B_h \otimes U_q(\goso(2n))$.
\end{Lem}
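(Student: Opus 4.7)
The natural strategy is downward induction on $k$, starting from $k = n$ (with the convention ${}_jY_n := {}_jX_n$) and descending to $k = j+1$, following the recursive definition ${}_jY_k = \croq{{}_jY_{k+1}}{E_k}$. The base case $k = n$ is precisely the already-established fact that $\Delta({}_jX_n) \in B_h \otimes U_q(\goso(2n))$ when q-brackets are used throughout, as noted in the analysis of the fourth set of generators.

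For the inductive step, suppose $\Delta({}_jY_{k+1}) = \sum_\ell A_\ell \otimes B_\ell$ with each $A_\ell \in B_h$ and each $B_\ell$ a weight vector; write $K_k B_\ell = q^{m_\ell} B_\ell K_k$ for the corresponding integer $m_\ell$. Combining $\Delta(E_k) = E_k \otimes K_k + 1 \otimes E_k$ with this weight relation and expanding the coproduct of the q-bracket yields
\[ \Delta({}_jY_k) = \sum_\ell \bigl[ A_\ell, E_k \bigr]_{q^{1+m_\ell}} \otimes B_\ell K_k + \sum_\ell A_\ell \otimes \bigl[ B_\ell, E_k \bigr]_q. \]
The second sum already lies in $B_h \otimes U_q(\goso(2n))$ since each $A_\ell \in B_h$, so all the work is in the first sum.

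For the first sum, it suffices to check that every coefficient $\bigl[ A_\ell, E_k \bigr]_{q^{1+m_\ell}}$ remains in $B_h$. Examining the inductive form of $\Delta({}_jY_{k+1})$, each $A_\ell$ is either $1$, or an earlier generator ${}_jY_m$ for some $m \ge k+1$ (possibly multiplied by a product of $K$'s that q-commutes with $E_k$). In the critical case $A_\ell = {}_jY_{k+1}$ the q-bracket $\croq{{}_jY_{k+1}}{E_k}$ is exactly the new generator ${}_jY_k$; in the remaining cases the q-bracket must vanish for the appropriate power of $q$.

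The principal technical obstacle is verifying these vanishing q-commutations $\bigl[ {}_jY_m, E_k \bigr]_{q^{1+m_\ell}} = 0$ for $m > k+1$, together with the matching of the $q$-exponent dictated by the weight $\nu_\ell$ of $B_\ell$. One handles these by expanding $\croq{\cdot}{\cdot}$ into its explicit definition, applying the $D_n$ Serre relations, and reducing through lemmas \ref{Astuce} and \ref{ijkj} to previously established vanishing commutators; particular care is required near the fork of the Dynkin diagram at $n-2$, where both $E_n$ and $E_{n-1}$ attach to $E_{n-2}$, since this is what forces the unusual shape of ${}_jX_n$ in the first place. Once these identities are in hand, every term of the first sum lies in $B_h \otimes U_q(\goso(2n))$, closing the induction and proving the lemma.
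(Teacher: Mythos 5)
Your overall strategy coincides with the paper's: the paper proves this lemma by exactly the descending induction along ${}_jY_k=\croq{{}_jY_{k+1}}{E_k}$ that you propose, except that where you isolate the abstract mechanism, the paper simply displays the closed formula for $\Delta({}_jY_k)$ that the induction produces. Your expansion
\[ \Delta({}_jY_k)=\sum_\ell \cro{A_\ell}{E_k}_{q^{1+m_\ell}}\otimes B_\ell K_k+\sum_\ell A_\ell\otimes\croq{B_\ell}{E_k} \]
is correct (given the coproduct $\Delta(E_k)=E_k\otimes K_k+1\otimes E_k$ and the weight relation $K_kB_\ell=q^{m_\ell}B_\ell K_k$) and is indeed the engine behind the paper's displayed formula.

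There is, however, a concrete flaw in your inductive invariant: the left tensor factors $A_\ell$ of $\Delta({}_jY_{k+1})$ are \emph{not} only $1$ and the ${}_jY_m$ with $m\ge k+1$. Comparing with the paper's formula, they also comprise $E_j$ and the whole family ${}_jX_{j+1},\dots,{}_jX_n$, which fit neither of the two shapes you allow. This omission is not cosmetic: besides your critical case $\croq{{}_jY_{k+1}}{E_k}={}_jY_k$, the first sum contains a second non-vanishing bracket, namely $A_\ell={}_jX_{k-1}$ with $\croq{{}_jX_{k-1}}{E_k}={}_jX_k$ (here $m_\ell=0$ because the accompanying $B_\ell$ has weight $L_k+L_{k+1}$, orthogonal to $\alpha_k$); this is precisely why ${}_jX_k$ up to ${}_jX_{n-2}$ each carry \emph{two} right-hand terms in the paper's formula, one coming from each of your two sums. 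The invariant you must propagate is therefore that every $A_\ell$ lies in $\{1,\,E_j,\,{}_jX_{j+1},\dots,{}_jX_n,\,{}_jY_{n-1},\dots,{}_jY_{k+1}\}$, with the case analysis: $\cro{{}_jX_m}{E_k}_{q^{1+m_\ell}}=0$ for $m\le k-2$ and $m\ge k$ (for $m=k$ the weight $2L_{k+1}$ of $B_\ell$ forces the exponent $q^{-1}$, so the needed identity is exactly $\cro{{}_jX_k}{E_k}_{q^{-1}}=0$, already established in the $\gosl(n+1)$ section), $\croq{{}_jX_{k-1}}{E_k}={}_jX_k$, $\cro{{}_jY_m}{E_k}_{q^{1+m_\ell}}=0$ for $m\ge k+2$, and $\croq{{}_jY_{k+1}}{E_k}={}_jY_k$. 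With that correction your reduction closes; the vanishing computations you defer are the same ones the paper leaves implicit behind its formula, and your plan for them (Serre relations together with lemmas \ref{Astuce} and \ref{ijkj}) is how they are handled there.
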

by a simple computation of all the term in ${}_jY_{k}$, we obtain by induction that:
\small{\begin{align*} \Delta({}_jY_k)
  =& 1 \otimes \crq{\crq{ {}_jX_n , E_{n-1}}\dots,E_k} + \ E_j \otimes \crq{\crq{\crq{\crq{\crq{K_j E_{j+1}} \dots,E_{n-2}}, E_n}, E_{n-1}} \dots,E_k} \\
  & +\  \cdots \ +  {}_jX_{k-1} \otimes \crq{\crq{\crq{\crq{\crq{{}_jK_{k-1} , E_k} \dots,E_{n-2}}, E_n} ,E_{n-1}} \dots,E_k} \\
  & + \ {}_jX_k \otimes\Big( \crq{\crq{\crq{\crq{\crq{{}_jK_{k} ,E_{k+1}}\dots,E_{n-2}},E_n},E_{n-1}}\dots,E_k} \\
  & \ \ \ \ \ \ \ \ \ \ \ \ \ \ \ \  + \crq{\crq{\crq{\crq{\crq{{}_jK_{k} ,E_k}\dots,E_{n-2}},E_n},E_{n-1}}\dots,E_{k+1}} \Big) \\
  & + \ {}_jX_{k+1} \otimes \Big( \crq{\crq{\crq{\crq{\crq{{}_jK_{k+1} ,E_{k+2}}\dots,E_{n-2}},E_n},E_{n-1}}\dots,E_k}  \\
  & \ \ \ \ \ \ \ \ \ \ \ \ \ \ \ \  + \crq{\crq{\crq{\crq{\crq{\crq{{}_jK_{k+1} ,E_{k+1}}\dots,E_{n-2}},E_n},E_{n-1}}\dots,E_{k+2}},E_k} \Big) \\
  & +\  \cdots \ +  {}_jX_{n-2} \otimes \Big( \crq{\crq{\crq{{}_jK_{n-2} ,E_{n}},E_{n-1}}\dots ,E_k}  \\
  & \ \ \ \ \ \ \ \ \ \ \ \ \ \ \ \ \ \ \ \ \ \ \ \ \ \   + \crq{\crq{\crq{\crq{\crq{{}_jK_{n-2},E_{n-2}},E_n},E_{n-1}},E_{n-3}}\dots ,E_k} \Big)  \\
  & +  \ {}_jX_{n-1} \otimes \crq{\crq{\crq{{}_jK_{n-1} ,E_{n}},E_{n-2}}\dots,E_k}   +  \ {}_jX_n \otimes \crq{\crq{{}_jK_{n-2} K_{n},E_{n-1}}\dots,E_k} \\
  & +  \ {}_jY_{n-1} \otimes \crq{\crq{{}_jK_{n-2} K_{n}K_{n-1},E_{n-2}}\dots,E_k}  + \ \cdots \ + \ {}_jY_k \otimes K_j \cdots K_{n-2}K_{n}K_{n-1} \cdots K_k \end{align*} }

$\bullet$ The fifth set is done exactly like the third one.  \newline

$\bullet$ Finally for the last set, we need to compute the different generators one by one.
One can find that
\begin{Lem} In $U_q(\gog)$ for $k < j$, by taking ${}_jY_{k}= \crq{\crq{ {}_jY_{j+1} , \crq{E_j , E_{j-1}}} \dots,E_{k}}$, we have  that $\Delta({}_jY_k) \in B_h \otimes U_q(\goso(2n))$.
\end{Lem}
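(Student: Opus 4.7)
The plan is to argue by induction on $k$, starting from the maximum value $k = j-1$ and decreasing to $k = 1$. The base case concerns
\[ \Delta\bigl(\crq{{}_jY_{j+1} , \crq{E_j , E_{j-1}}}\bigr), \]
which we compute by combining the explicit formula for $\Delta({}_jY_{j+1})$ obtained in the preceding lemma with the standard coproducts of $E_j$ and $E_{j-1}$, then expanding the outer $q$-bracket. As in the previous sets of generators, the $q$-powers in the outer and inner commutators must be chosen so that every left tensor factor that appears is either one of the generators listed in groups (a)--(f) or a product thereof; the required $q$-powers are forced by the need to kill the ``rogue'' terms arising from cross-multiplication with the $K_i$'s, via the $q$-commutation relations between $E_l$ and $K_m$.

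For the inductive step, assume $\Delta({}_jY_{k+1}) = \sum_i a_i \otimes b_i$ with each $a_i \in B_h$. Writing
\[ \Delta({}_jY_k) = \crq{\Delta({}_jY_{k+1}) , \Delta(E_k)} \]
and expanding via $\Delta(E_k) = E_k \otimes K_k + 1 \otimes E_k$, each new left tensor factor takes the form $\crq{a_i , E_k}$ (with an appropriate $q$-power), $a_i E_k$, or $a_i$ itself. By Lemma \ref{Astuce} together with the quantum Serre relations, the interaction of $E_k$ with each $a_i$ is local: most commutators vanish outright (whenever $a_i$ involves only $E_l$ with $|l-k| \ge 2$), and the non-trivial ones produce either another generator of $B_h$, such as ${}_jY_k$ itself, or zero.

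The main obstacle will be bookkeeping: as the preceding lemma illustrates, $\Delta({}_jY_{k+1})$ already has many summands, and multiplying by $\Delta(E_k)$ roughly doubles their number. For each summand one must identify the correct $q$-power so that $\crq{a_i , E_k}$ falls in $B_h$, and then check that the resulting tensor fits the recursive pattern one expects. The Cartan matrix of $D_n$ ensures that only adjacent-index interactions are non-trivial, which keeps the case analysis finite; once all cases are verified, induction yields $\Delta({}_jY_k) \in B_h \otimes U_q(\goso(2n))$ for every $k < j$.
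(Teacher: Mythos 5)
Your proposal is correct and follows essentially the same route as the paper: the paper proves this lemma ``exactly like the preceding lemma,'' namely by a decreasing induction that expands $\Delta({}_jY_{k}) = \bigl[\Delta({}_jY_{k+1}),\Delta(E_k)\bigr]_q$ term by term, pairing cross-terms into $q$-brackets (possible since the right tensor factors are weight vectors, so $K_k b_i$ is proportional to $b_i K_k$) and choosing the $q$-powers so the rogue terms vanish via the $E_l$--$K_m$ commutation relations, exactly as you describe. Your sketch matches the paper's level of detail, including the base case $\crq{{}_jY_{j+1},\crq{E_j,E_{j-1}}}$ handled from the explicit coproduct formula of the preceding lemma.
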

The proof is done exactly like the preceding lemma.

The candidate $B_h$ that we choose, will be generated by:
\begin{align*} (a) & \ \   K_1\cdots K_{j-1}, E_1 ,\ \crq{E_1 ,E_2}, \ \crq{\crq{E_1 ,E_2} , E_3}  \dots  ,   \crq{\crq{E_1 ,E_2} \dots,E_{j-2}} \\
 (b) & \ \ E_j ,\ \crq{E_j , E_{j+1}}, \ \crq{\crq{E_j , E_{j+1}} , E_{j+2}} \dots   ,  \crq{\crq{E_j , E_{j+1}}\dots ,E_{n-1}} \\
(c) & \ \ \crq{E_j , T} , \ \crq{\crq{E_j , E_{j+1}} , T}, \
\crq{\crq{\crq{E_j , E_{j+1}} \dots ,E_{n-1}} , T} \\
 (d) & \ \ {}_jX_n =  \crq{\crq{\crq{E_j , E_{j+1}}\dots ,E_{n-2}} , E_n},\  \crq{ {}_jX_n ,E_{n-1}}  \dots  , {}_jY_{j+1} = \crq{\crq{{}_jX_n , E_{n-1}} \dots ,E_{j+1}} \\
 (e) & \ \ \crq{ {}_jX_n , T}, \ \crq{\crq{ {}_jX_n ,E_{n-1}} , T}, \dots \ , \ 
\crq{ {}_jY_{j+1} ,T} \\
(f) &  \ \ \crq{ {}_jY_{j+1} , \crq{E_j , E_{j-1}}}, \ \dots \ , \crq{\crq{\crq{ {}_jY_{j+1} , \crq{E_j ,E_{j-1}}} ,E_{j-2}} \dots ,E_1} \end{align*}
where $T = X_{j-1} = \cro{\cro{E_1}{E_2}}{\dots ,E_{j-1}}$ is not a generator. Each line corresponding to a set of generators.
\begin{Prop} 
The subalgebra $B_h$ is a left coideal of the bialgebra $U_q(\goso(2n))$.
\end{Prop}
We need to check that this deformation is flat to prove that it is indeed a quantization.
\begin{Theo} 
$B_h$ is a quantization of $\goh$.
\end{Theo}
\begin{proof}: 
%Using the lemma \ref{LemQ} we need to prove that for all generators $a_1$, $a_2$ we have that $\cro{a_1}{a_2}$ is composed of elements either well ordered, of degree 1 (the same as well ordered here) or of valuation on h greater than $a_1a_2$. \newline
By computation, we will prove that the deformation is flat.\newline

  $\bullet$ For $A_1,A_2 \in ((a),(a))$, the demonstration is the same as $\gosl(n+1)$.  \newline

  $\bullet$ For $A_1,A_2 \in ((a),(b))$ or $((a),(d))$, we have that $\crc{A_1,A_2}=0$, because they commute.  \newline
  
  $\bullet$ For $A_1,A_2 \in ((a),(c))$or $ ((a),(e))$, it is equivalent to $\cro{A_1}{T}$ where $T = \cro{\cro{E_1}{E_2}}{\dots,E_{j-1}}$. By using the same argument as the proof in $\gosl(n+1)$, we can prove that $\cro{A_1}{T} = 0$. Also, we have that $A_1$ commutes with the elements in (b) and (d), therefore it commutes with $A_2$. \newline 

  $\bullet$ For $A_1,A_2 \in ((a),(f))$, it is done in the following way:\newline
set $A_1=X_k=\cro{\cro{E_1}{E_2}}{\dots,E_k}$ for $1\le k \le j-2$ and $A_2={}_jY_l=\cro{\cro{ {}_jY_{j+1}}{\cro{E_j}{E_{j-1}}}}{\dots,E_{l}}$ for $1 \le l \le j-1$. We have to examine $\cro{X_k}{ {}_jY_l}$. \newline 
$\diamond$  If $k < l-1$, then it is easy to see that $X_k$ and ${}_jY_l$ commute. \newline
 $\diamond$ If $k = l-1$, we then need to consider each case. for k=1, we have:
\[ \cro{ {}_jY_{2}}{X_1}_q = {}_jY_1 \]
for k=2, we will use the following propertie, for every A, B and $a \in \mathbb{Z}$, we have that $\cro{A}{B}_q = - \cro{B}{A}_{q} + (1-q)(BA-AB)$.  Then by a simple computation, we have
\begin{align*}  \crq{  {}_jY_3 ,X_2}=& \cro{ {}{}_jY_{3}}{\cro{X_1}{E_2}_q}_{q} \\
                                            =& \cro{X_1}{\cro{Y_{3}}{E_2}_q}_q \\
                                            =& -\cro{Y_2}{X_1}_q  + (1-q)(Y_2X_1-X_1Y_2) \\
                                            =& -\cro{Y_2}{X_1}_q  + (1-q)(Y_2X_1-X_1Y_2) \end{align*}
And by successive iteration, we can find that:
\begin{align*} \cro{Y_{k+1}}{X_k}_{q} =& \cro{Y_{k+1}}{\cro{X_{k-1}}{E_k}_q}_{q} \\
                                               =& \cro{X_{k-1}}{\cro{Y_{k+1}}{E_k}_q}_q \\\
                                               =& - \cro{Y_k}{X_{k-1}}_{q}  + (1-q)( Y_kX_{k-1} - X_{k-1}Y_k) \\
                                               =& (-1)^{k-1} Y_1 + (1-q) \left(\sum_{i=1}^{k-1} (-1)^{j-1-i} ( Y_{i+1}X_{i} - X_iY_{i+1}) \right)             \end{align*}
$\diamond$ If $j-2 \ge k \ge l$ then we have to consider:
\begin{align*}  \cro{X_k}{ {}_jY_l} =& \cro{X_k}{ \cro{ {}_jY_{j+1}}{\cro{\cro{E_j}{E_{j-1}}_q}{\dots,E_l}_q}_q} \\
                                           =& \cro{ {}_jY_{j+1}}{\cro{X_k}{\cro{\cro{E_j}{E_{j-1}}_q}{\dots,E_l}_q}}_q  \end{align*}
We can verify that $\cro{X_k}{\cro{\cro{E_j}{E_{j-1}}_q}{\dots,E_l}_q}= 0$ by using the fact that :
 \[ X_k = \cro{\cro{\cro{X_{l-2}}{\cro{E_{l-1}}{E_l}_q}_q}{E_{l+1}}_q}{\dots,E_k}_q \] 
We have that $X_{l-2}$ and ${}_jX_{l} = \cro{\cro{E_j}{E_{j-1}}_q}{\dots,E_l}_q$ commute. It is the same for $\cro{E_{l-1}}{E_l}_q$, $E_{l+1}$ \dots,$E_k$. Therefore $X_k$ and $\cro{\cro{E_j}{E_{j-1}}_q}{\dots,E_l}_q$ commute. \newline 

  $\bullet$ For $A_1,A_2 \in ((b),(b))$, the demonstration is the same as $\gosl(n+1)$. \newline

  $\bullet$ For $A_1,A_2 \in ((b),(c))$, is equivalent to $\cro{(b)}{\cro{(b)}{T}}$. We have in fact to compute $\cro{ {}_jX_k}{\cro{ {}_jX_l}{T}}$ and this is done just like in $\gosl(n+1)$. We find that if k < l then we just have to use the lemma \ref{ijkj}. If k = l, then we have that  $\cro{ {}_jX_k}{\cro{ {}_jX_l}{T}}_{q^{-1}}= 0$ by using the same proof as in $\gosl(n+1)$. And if k > l, then $\cro{ {}_jX_k}{\cro{ {}_jX_l}{T}}= 0$ which is also done like $\gosl(n+1)$. \newline 
  
  $\bullet$ For $A_1,A_2 \in ((b),(d))$, we need to examine $\cro{ {}_jX_k}{Y_l}$, for $j\le k \le n-1$ and $j+1 \le l \le n$.  \newline
$\diamond$ If $k < l-1$, then we have that:
\begin{align*} \cro{ {}_jX_k}{Y_l}_{q^{-1}} =& \cro{ {}_jX_k}{\cro{\cro{ {}_jX_n}{E_{n-1}}_q}{\dots,E_l}_q}_{q^{-1}} \\
                                        =& \cro{\cro{\cro{ {}_jX_k}{ {}_jX_n}_{q^{-1}}}{E_{n-1}}_q}{\dots,E_l}_q \end{align*} 
We can verify that $\cro{ {}_jX_k}{ {}_jX_n}_{q^{-1}}=0$ for $k \le n-2$ just by using the same proof as in $\gosl(n+1)$.
Therefore $\cro{ {}_jX_k}{Y_l}_{q^{-1}} = 0$ for $k < l-1$. \newline
$\diamond$ If $k=l-1$, then for $k=n-1$ and $l=n$, we have:
\[ \cro{ {}_jX_{n-1}}{ {}_jX_n} = \cro{\cro{ {}_jX_{n-2}}{E_{n-1}}_q}{\cro{ {}_jX_{n-2}}{E_{n}}_q} \]
let's set $A={}_jX_{n-2}$, $B=E_{n-1}$ and $C = E_{n}$.
We are in the same settings as the lemma in $\gosl(n+1)$.
%\[  \cro{\cro{A}{B}_q}{\cro{A}{C}_q} = ABAC - q BAAC - q ABCA  + q^2 BACA - ACAB  + q CAAB + q ACBA - q^2 CABA \]
%and using the fact that $\cro{A}{\cro{A}{B}_q}_{q^{-1}}=0$:
%\[ BAA - (q + q^{-1})ABA + AAB = 0 \ \ (R_B) \]
%and the fact that $\cro{A}{\cro{A}{C}_q}_{q^{-1}}=0$:
%\[ CAA - (q + q^{-1})ACA + AAC = 0 \ \ (R_C) \]
%by using the first two terms in our equation, we have:
%\[    ABAC - q BAAC  = - q^2 ABAC + q AABC = - q^2 ABAC + q AACB  \]
%and then we can also apply this equation:
%\[  - ACAB + q CAAB + q AACB = q^2 ACAB \]
%therefore we will have :
%\[  \cro{\cro{A}{B}_q}{\cro{A}{C}} = q^2 ( BACA + ACAB - CABA - ABAC ) \]
%But we can prove that:
%\[ - BR_C - R_CB + CR_B + R_BC = (q + q^{-1})( BACA + ACAB - CABA - ABAC ) =0\]
%we will suppose that $q$ is neither equal to $i$ or $-i$ such that  $(q + q^{-1}) \neq 0$ meaning that
%\[  BACA + ACAB - CABA - ABAC  = 0\]
Therefore $\cro{ {}_jX_{n-1}}{ {}_jX_n} = \cro{\cro{A}{B}_q}{\crq{A , C}} = 0$. \newline
 For k=n-2 and l=n-1, we have:
\begin{align*}  \cro{ {}_jX_{n-2}}{ {}_jY_{n-1}} =& \cro{ {}_jX_{n-2}}{\cro{ {}_jX_n}{E_{n-1}}_q} \\
                                                                        =& q^{-1} \cro{ {}_jX_n}{\cro{ {}_jX_{n-2}}{E_{n-1}}_q}_{q^2} \\
                                                   =& - q^{-1} \cro{ {}_jX_{n-1}}{ {}_jX_n} + q^{-1}(1-q^2)({}_jX_{n-1} \ {}_jX_n) \\
                                                   =&  q^{-1}(1-q^2)({}_jX_{n-1} \ {}_jX_n) \end{align*} 
And by successive iteration we can find that:
\begin{align*}  \crc{ {}_jX_{k} ,  {}_jY_{k+1}}  = & \crc{ {}_jX_{k} , \crq{ {}_jY_{k+2} , E_{k+1}}}    \\
                                                = & q^{-1} \cro{ {}_jY_{k+2}}{\cro{ {}_jX_{k}}{E_{k+1}}_q}_{q^2} \\
                                                 = & - q^{-1} \cro{ {}_jX_{k+1}}{ {}_jY_{k+2}} + q^{-1}(1-q^2)({}_jX_{k+1} {}_jY_{k+2}) \\ 
                                                = &  (1-q^2) \left( \sum_{i=1}^{n-k-1} (-q)^{-i}({}_jX_{k+i}  {}_jY_{k+1+i}) \right)                 \end{align*} 
$\diamond$ If $k \ge l$, then we need to consider:
 \begin{align*}  \cro{ {}_jX_{k}}{ {}_jY_{l}}_{q^{-1}} =& \cro{ {}_jX_{k}}{\cro{\cro{ {}_jY_{k}}{E_{k-1}}}{\dots,E_l}} \\
                                             =&  \cro{\cro{\cro{ {}_jX_{k}}{ {}_jY_{k}}_{q^{-1}}}{E_{k-1}}}{\dots,E_l} \end{align*} 
We will consider  $\cro{ {}_jX_{k}}{ {}_jY_{k}}_{q^{-1}}$. For k=n-1, we have:
\[ \cro{ {}_jX_{n-1}}{ {}_jY_{n-1}}_{q^{-1}} = \cro{ {}_jX_{n-1}}{\cro{ {}_jX_{n}}{E_{n-1}}}_{q^{-1}} \]
But we have that $\cro{ {}_jX_{n-1}}{ {}_jX_{n}}=0$ and $\cro{ {}_jX_{n-1}}{E_{n-1}}_{q^{-1}} = 0$ therefore:
$\cro{ {}_jX_{n-1}}{ {}_jY_{n-1}}_{q^{-1}} =0$.
for $k \le n-2$. We have:
\begin{align*}  \cro{ {}_jX_{k}}{ {}_jY_{k}}_{q^{-1}} =& \cro{ {}_jX_{k}}{\cro{ {}_jY_{k+2}}{\cro{E_{k+1}}{E_k}}}_{q^{-1}} \\
                                        =& q^{-1} \cro{ {}_jY_{k+2}}{\cro{ {}_jX_{k}}{\cro{E_{k+1}}{E_k}}}_{q^2} \end{align*} 
but we have that  $\cro{ {}_jX_{k}}{\cro{E_{k+1}}{E_k}}=0$ by using the proof of $\gosl(n+1)$.
Therefore $\cro{ {}_jX_{k}}{ {}_jY_{l}}_{q^{-1}} = 0$ for $k \ge l$. \newline

  $\bullet$ For $A_1,A_2 \in ((b),(e))$, it is equivalent to $\cro{(b)}{\cro{(d)}{T}}$. Is done exactly the same way as the previous one by considering the fact that $(e) = \cro{ {}_jY_l}{T} = \cro{\cro{\cro{ {}_jX_{n}}{T}}{E_{n-1}}}{\dots,E_l}$. \newline

  $\bullet$ For $A_1,A_2 \in ((b),(f))$, this is proved by using the fact that we only need to consider this calculus for the element ${}_jY_{j-1}$ for $(f)$ because all the other calculus are done trivially using this element. \newline
One can see that we have for $j \leq k \leq n-2$:
\[ \cro{ {}_jX_k}{ {}_jY_{j-1}}_{q^{-1}} = \cro{ {}_jX_k}{\cro{ {}_jY_{k+2}}{ _{k+1}D_{j-1}}_q}_{q^{-1}} \]
where  $_{k+1}D_{j-1} = \cro{E_{k+1}}{\cro{E_k}{\dots \cro{E_j}{E_{j-1}}}}$. And by using the same method as in $\gosl(n+1)$, we have that ${}_jX_k$ commutes with $_{k+1}D_{j-1}$. And by using the previous calculus, we have that ${}_jX_k$ $q^{-1}-$ commutes with ${}_jY_{k+2}$ and therefore $\cro{ {}_jX_k}{ {}_jY_{j-1}}_{q^{-1}} = 0$. \newline
One last computation for k = n-1.
\[ \cro{ {}_jX_{n-1}}{ {}_jY_{j-1}}_{q^{-1}} = \cro{ {}_jX_{n-1}}{\cro{ {}_jY_n}{ _{n-1}D_{j-1}}_q}_{q^{-1}} \]
By using the same method as in $\gosl(n+1)$, we have that ${}_jX_{n-1}$ $q^{-1}$-commutes with $_{n-1}D_{j-1}$. And by using the previous calculus, we have that ${}_jX_{n-1}$ commutes with ${}_jY_{n}$ and therefore $\cro{ {}_jX_{n-1}}{ {}_jY_{j-1}}_{q^{-1}} = 0$. \newline

$\bullet$ For the remaining, case, it is either done like previously or by using some simples arguments.
By using the proposition \ref{LemQ}, we finish our proof.
\end{proof}

\section{$\gosp(2n)$}

Following the construction, we construct coisotropic subalgebra in $\goh \subset \gosp(2n)$. \newline
We consider $\gog$ with Cartan subalgebra given by the diagonal matrices. The roots will be given by $R= \{ \pm L_i \pm L_j \}$. The roots satisfying the assumption are of the form $\pm 2 L_i$.
The root space of $\alpha = L_i - L_j$ is given by $e_{\alpha} = x_{i,j} = e_{i,j} - e_{n+j,n+i}$ and $f_\alpha = x{j,i}$, for $\alpha = L_i + L_j$ it is given by $e_{\alpha} = y_{i,j} = e_{i,n+j} + e_{j,n+i}$ and $f_{\alpha}=z_{i,j}=e_{n+i,j} + e_{n+i,j}$ and finally for $\alpha = 2 L_i$ it is given by $e_{\alpha} = u_i = E_{i,n+i}$ and $f_{\alpha} = v_i = e_{n+i,i}$. We obtain the r-matrix
\[ \pi = \lambda \left( \frac12 \sum_{i < j}\left( x_ij \wedge x_ji + y_ij \wedge z_ij \right) + \sum_i u_i \wedge v_i \right) \]
where $\lambda \in \mathbb{R}^*$. We fix the root $\beta=2L_i$. We then compute the bracket:
\[ \cro{u_i}{\pi} = \lambda \left( \sum_{i<j} y_{i,j} \wedge x_{i,j} +  u_i \wedge h_i \right) \]
Where $\{h_i = e_ii - e_{i+1,i+1} - e_{n+i,n+i} + e_{n+i+1,n+i+1}, h_n = e_{n,n} - e_{2n,2n}\}_{1\le i \le n-1}$ is the basis of the Cartan subalgebra.
The coisotropic subalgebra $\goh$ that we obtain, in $\gog$ is generated by :
\[ \{ y_{i,k} , x_{i,k}\}_{i<k},\ u_i,\ h_i + h_{i+1} \dots + h_n \]
Without loose of generality, one can restrict the study to i=1, the other case being equivalent to the first one in lower dimension. Then, the coisotropic subalgebra $\goh$ that we hence obtain, is generated by:
\begin{align*} 
(a) & \ \  h_1 +\cdots + h_{n}, \ e_1,\ \cro{e_1}{e_2},\ \cro{\cro{e_1}{e_2}}{e_3}, \ \dots \ , \cro{\cro{e_1}{e_2}}{\dots,e_{n-1}} \\
(b)  &\ \  x_n \ = \ \cro{\cro{e_1}{e_2}}{\dots,e_{n}}, \ \cro{x_n}{e_{n-1}},\ \cro{\cro{x_n}{e_{n-1}}}{e_{n-2}}, \ \dots \ , \cro{\cro{x_n}{e_{n-1}}}{\dots,e_{1}} 
\end{align*}
\begin{equation*} \left( \begin{array}{cccc|cccc} 
                     a_0 & a_1 &\cdots & a_{n\text{-}1}     & b_1        &     b_2     & \cdots & b_n  \\
                           & 0     &  \cdots & 0           &  b_2         &     0     & \cdots    &     0       \\
                           &        & \ddots &  \vdots  & \vdots &     \vdots  &           & \vdots    \\
                           &       &              & 0         &  b_n         & 0 &  \cdots  &  0         \\  
                        
\hline 
                          &        &             &      & \text{-}a_0   &              &            &                     \\
                          &        &             &       & \text{-}a_1   &   0         &            &                    \\
                           &        &             &        & \vdots  &   \vdots &   \ddots &                     \\
                            &        &             &         & \text{-}a_{n\text{-}1}&       0    &    \cdots&   0             \\
                           
 \end{array} \right) \end{equation*}

The candidate $B_h$ that we choose to be the quantization of $\goh$ in $U_q(\gosp(2n))$, will be generated by:
\begin{align*}
(a) & \ \   K_1\cdots K_{n}, \ E_1,\ \cro{E_1}{E_2}_q,\ \cro{\cro{E_1}{E_2}_q}{E_3}_q, \ \dots \ , \cro{\cro{E_1}{E_2}_q}{\dots,E_{n-1}}_q \\
 (b) & \ \ X \  = \ \cro{\cro{E_1}{E_2}_q}{\dots,E_{n}}_{q^2}, \ \cro{X}{E_{n-1}}_q,\ \cro{\cro{X}{E_{n-1}}_q}{E_{n-2}}_q, \ \dots \ , \cro{\cro{X}{E_{n-1}}_q}{\dots,E_{1}}_q 
\end{align*}
We now have to check if $\Delta(B_h) \subset B_h \otimes U_q(\gosp(2n))$.
\begin{Prop} 
The subalgebra $B_h$ is a left coideal in $U_q(\gosp(2n))$.
\end{Prop}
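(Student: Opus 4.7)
The plan is to verify the coideal property on the generators of $B_h$: since $\Delta$ is an algebra morphism and $B_h$ is closed under multiplication, $\Delta(B_h) \subset B_h \otimes U_q(\gosp(2n))$ will follow once $\Delta(g) \in B_h \otimes U_q(\gosp(2n))$ is checked for each listed generator $g$. The element $K_1 \cdots K_n$ is group-like, so that case is immediate.

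For the generators in (a), write $X_k := \cro{\cro{E_1}{E_2}_q}{\dots , E_k}_q$ (so $X_1 = E_1$). The argument is the one already carried out for $\gosl(n+1)$: by induction on $k$, one obtains a formula
\[
\Delta(X_k) = 1 \otimes X_k + \sum_{j=1}^{k} X_j \otimes Z_j ,
\]
with $Z_j \in U_q(\gosp(2n))$. The $q$-powers in each inner bracket are chosen precisely so that the potentially troublesome term of the form $X_j \otimes \cro{K_1 \cdots K_j}{E_{j+1}}_q$ vanishes.

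The heart of the proof is the single new generator $X = \cro{X_{n-1}}{E_n}_{q^2}$, whose definition uses a $q^2$-bracket because $\alpha_n$ is the long simple root of $C_n$. The value $a = 2$ is in fact forced: expanding $\Delta(X) = \cro{\Delta(X_{n-1})}{\Delta(E_n)}_{q^a}$ and collecting the contribution of the summand $1 \otimes X_{n-1}$ against $\Delta(E_n) = E_n \otimes K_n + 1 \otimes E_n$, one gets a term $E_n \otimes (X_{n-1} K_n - q^a K_n X_{n-1})$. Since $X_{n-1}$ has weight $\alpha_1 + \cdots + \alpha_{n-1}$ and $d_n a_{n,n-1} = -2$, one has $X_{n-1} K_n = q^2 K_n X_{n-1}$, so this term vanishes if and only if $a = 2$. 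The vanishing is necessary because $E_n \notin B_h$. With $a = 2$ fixed, a direct expansion of $\Delta(X)$ using the formula for $\Delta(X_{n-1})$ above shows every remaining tensor has left factor in $\{1, E_1, X_2, \dots, X_{n-1}, X\} \subset B_h$: for each $X_j$ ($j < n-1$) the analogous cancellation $W_j K_n = q^2 K_n W_j$ eliminates the $E_n X_j \otimes(\cdot)$ piece, while the $X_{n-1}$-contribution produces the top-weight term $X \otimes K_1 \cdots K_n$.

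The remaining generators of (b), namely $\cro{X}{E_{n-1}}_q, \cro{\cro{X}{E_{n-1}}_q}{E_{n-2}}_q, \dots$, are handled by an induction modeled on the $\goso(2n)$ section. At each step, the $q$-bracket with $E_k$ applied to the previously constructed generator contributes the next generator of (b) as its top-weight part, together with lower-order tensors whose left factors are products of already-handled generators from (a) and (b), hence in $B_h$. The main obstacle is the identification of the $q^2$-power in the definition of $X$ above; once that is settled, the remaining verifications are a formal extension of the $\gosl(n+1)$ and $\goso(2n)$ computations already carried out.
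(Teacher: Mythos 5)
Your argument is essentially the paper's own: reduce to the generators, reuse the $\gosl(n+1)$ induction for the family (a), force the $q^{2}$-bracket in $X$ by the weight relation $X_{n-1}K_n = q^{2}K_nX_{n-1}$ (which the paper states as $\cro{E_{n-1}}{K_n}_{q^{2}}=0$), and treat the remaining generators $\crq{\crq{X,E_{n-1}}\dots,E_k}$ by the same induction as in the $\goso(2n)$ section, where the paper simply displays the resulting explicit coproduct formula. One local slip to correct: in your summary of family (a), the terms $X_j\otimes\cro{K_1\cdots K_j}{E_{j+1}}_q\cdots$ do \emph{not} vanish (that bracket equals $(q^{-1}-q)\,E_{j+1}K_1\cdots K_j$) and need not, since their left factor lies in $B_h$; the $q$-powers are chosen so as to kill the terms whose left factor is $E_{j+1}\notin B_h$, via $\cro{X_j}{K_{j+1}}_q=0$ --- which is exactly the mechanism you then apply correctly to $X$.
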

\begin{proof} 

$\bullet$ It is easy to see that the first set of generators will satisfy this property by using the same demonstration as in $\gosl(n+1)$. We need to check the property with the second set of generators. One can check that for $\cro{\cro{E_1}{E_2}}{\dots,E_{n}}_{q^2}$, it is almost the same as in $\gosl(n+1)$.
we just need to see that:
\[ \cro{E_{n-1}}{E_n}_{q^2} = 1 \otimes \cro{E_{n-1}}{E_n}_{q^2} + E_{n-1} \otimes \cro{K_{n-1}}{E_n}_{q^2} + \cro{E_{n-1}}{E_n}_{q^2} \otimes K_nK_{n-1} \]
We do not see $E_n$ because we have that $\cro{E_{n-1}}{K_n}_{q^2} = 0$. \newline

$\bullet$ For the remaining ones, we will have to do an other induction.

\begin{Lem} In $U_q(\gosp(2n))$, we have to take $Y_{k} = \crq{\crq{X_n,E_{n-1}}\dots,E_{k}}$, for $k \in \{1,\dots,n-1\}$, to have $\Delta(Y_{k}) \in B_h \otimes  U_q(\gosp(2n))$
\end{Lem}
the proof is done by computation. One can find that:
\begin{align*} \Delta(Y_{k}) =& 1 \otimes \crq{\crq{\crqq{\crq{E_1,E_2}\dots,E_{n}},E_{n-1}}\dots,E_{k}}  +  E_1 \otimes \crq{\crq{\crqq{\crq{K_1,E_2}\dots,E_{n}},E_{n-1}}\dots,E_{k}} \\
 & + \ \cdots \ + \ X_{k-1} \otimes \crq{\crq{\crqq{\crq{ {}_1K_{k-1} ,E_{k}}\dots,E_{n}},E_{n-1}}\dots,E_{k}} \\
 & +\  X_{k} \otimes \Big( \crq{\crq{\crqq{\crq{ {}_1K_{k} ,E_{{k}+1}}\dots,E_{n}},E_{n-1}}\dots,E_{k}} \\
 & \ \ \  \ \ \ \ \ \ \ \ \  + \crq{\crq{\crqq{\crq{ {}_1K_{n} ,E_{k}}\dots,E_{n}},E_{n-1}}\dots,E_{{k}+1}} \Big)  \\
 & + \ \cdots \  + \ X_{n-1} \otimes \Big( \crq{\crq{\crqq{{}_1K_{n-1},E_n},E_{n-1}}\dots,E_{k}} \\
 &\ \ \ \ \ \ \ \ \ \ \ \ \ \ \ \  \ \ \ \ \ \ \ \  + \crq{\crq{\crqq{\crq{{}_1K_{n-1},E_{n-1}},E_{n}},E_{n-2}}\dots,E_{{k}}} \Big) \\
 & + \ X_{n} \otimes \crq{\crq{{}_1K_{n},E_{n-1}}\dots,E_{k}}  +  Y_{n-1} \otimes  \crq{\crq{{}_1K_{n}K_{n-1},E_{n-2}}\dots,E_{k}} \\
 & + \ \cdots \  + \ Y_{k} \otimes {}_1K_{n} \ {}_{n-1}K_{k}
\end{align*} 
\end{proof}
Then again as what we did in the last part, we need to check if this quantization is flat. And we will follow the exact same demonstration.
\begin{Theo} 
$B_h$ is a quantization of $\goh$.
\end{Theo}
 \begin{proof}
 Using the lemma \ref{LemQ}, we need to prove that for all generators $A_1$, $A_2$. By computation, we will prove that this assertion is true. \newline
 
$\bullet$ $A_1,A_2 \in (a),(a)$, it is done the same way as in the previous example $\gosl(n+1)$. \newline

$\bullet$ $A_1,A_2 \in (a),(b)$. We can set $A_1 = X_j$ and $A_2 = Y_{k}$ with $1 \leq k,j \leq n$. \newline 
$\diamond$ If $k \geq j+2$. We have:
\[ \crc{X_j , Y_k}_{q^{-1}} = \crc {X_j , \crq{\crq{\crqq{X_{n-1} , E_n} , E_{n-1}} , \dots,E_{k}}}_{q^{-1}}  = 0\]
By using the  fact that $\crc{X_j , X_{n-1}}_{q^{-1}} = 0$(given by the previous example) and the fact $X_j$ commutes with $E_n,E_{n-1}\dots,E_k$ for $k \ge j+2$. \newline
 $\diamond$ If $k = j+1$ . First for j=1, we have:
\[ \crq{Y_{2} , X_1} = Y_1 \]
 For j=2, we will use the following lemma:
\begin{Lem} If $\crc{B , A}_{q^{-1}}=0$, then  $\crqq{A , \crq{B , C}} = q \crc{B , \crq{C , A}}$. \newline
For every A, B and $a \in \mathbb{Z}$, we have that $\crc{A, B} = - \crc{B , A}_{a} + (1-q^a)(AB)$.
\end{Lem} \begin{align*} \crqq{Y_{3} , X_2}  =& \crqq{Y_{3} , \crq{X_1 , E_2}} \\
                                            =& q  \crc{X_1 , \crq{Y_{3} , E_2}} \\
                                            =& -q \left( \crq{Y_2, X_1}  - (1-q)(X_1Y_2) \right) \\
                                            =& -q Y_1 + (1-q)q(X_1Y_2) \end{align*}
And by successive iteration we can find that:
\begin{align*}  \crqq{Y_{j+1} , X_j}=& \crqq{Y_{j+1} , \crq{X_{j-1} , {E_j}}} \\
                                               =& q  \crc{X_{j-1} , \crq{Y_{j+1} , E_j}} \\
                                               =& -q \left( \crqq{Y_j , X_{j-1}}  - (1-q^2)(X_{j-1}Y_j) \right) \\
                                               =& (-q)^{j-1} \left( Y_1  +(1-q) X_1Y_2 \right) - \left((1-q^2) \sum_{i=2}^{j-1} (-q)^{j-i} X_iY_{i+1} \right) \end{align*}
$\diamond$ If $k = j$ , we have for $j=n$, $\cro{X_n}{X_n} = 0$. If $j = n-1$, we have that:
\[ \crc{X_{n-1},Y_{n-1}}_{q^{-1}} = \crc{\crc{X_{n-2},E_{n-1}}_q,\crc{\crc{\crc{X_{n-2},E_{n-1}}_q,E_n}_{q^2},E_{n-1}}_q}_{q^{-1}} \]
By setting $A=X_{n-2}$, $B=E_{n-1}$ and $C=E_n$ and by using the following relation $\crc{A,\crc{A,B}_q}_{q^{-1}}$ , $\crc{B,\crc{B,A}_q}_{q^{-1}}$, $\crc{B,\crc{B,\crc{B,C}_{q^2}} }_{q^{-2}}$. We can prove that $\crc{X_{n-1},Y_{n-1}}_{q^{-1}} = 0$. For $j \le n-2$, we have that:
\begin{align*}  \crc{X_{j},Y_{j}}_{q^{-1}} =& \crc{X_j,\crc{Y_{j+2},\crc{E_{j+1},E_{j}}_q}_q}_{q^{-1}} \\
                                         =& q^{-1} \crc{Y_{j+2},\crc{X_j,\crc{E_{j+1},E_j}_q}}_{q^2} \end{align*}
Which is zero by using the same proof as in $\gosl(n+1)$. i.e. $\crc{X_j,\crc{E_{j+1},E_j}_q} = 0$.                          \newline
 $\diamond$ Finally if $n \neq k > j$, we have to consider:
\begin{align*} \crc{X_k,Y_j}_{q^{-1}} =& \crc{X_k,\crc{Y_k,\crc{E_k-1,\dots,E_j}_q}_q}_{q^{-1}} \\
                                   =& \crc{\crc{X_k,Y_k}_{q^{-1}},\crc{E_k-1,\dots,E_j}_q}_q \ = 0  \end{align*}
Because $\crc{E_k-1,\dots,E_j}_q$ commutes with $X_k$ by using the same proof as in $\gosl(n+1)$ and  $\crc{X_k,Y_k}_{q^{-1}} = 0$. For k=n, we have to consider the special case of
\[  \crc{X_n,Y_{n-1}}_{q^{-1}} = \crc{ \crqq{X_{n-1},E_n} ,Y_{n-1}}_{q^{-1}} = 0  \]
which is solved by using the fact that $\crc{X_{n-1},Y_{n-1}}_{q^{-1}} = \crc{E_n ,Y_{n-1}} = 0$. Then we extend this to $Y_j$ by using the fact $E_j$ commutes with $X_n$ for $j \leq n-2$. \newline

$\bullet$  $A_1,A_2 \in (b),(b)$, we need to compute $\crc{Y_k,Y_{k-1}}_{q^{-1}}$ with $k \ge j$. We can see that:
\begin{align*} \crc{Y_k,Y_{k-1}}_{q^{-1}} =&  \crc{Y_k,\crc{Y_k+1,\crc{E_k,E_{k-1}}_q}_q}_{q^{-1}} \\
                                             =& \crc{\crc{Y_k,Y_{k+1}}_{q^{-1}},\crc{E_k,E_{k-1}}_q}_q \end{align*}
Because $\crc{E_k,E_{k-1}}_q$ commutes with $X_n$ and $_{n-1}D_k$ therefore it commutes with $Y_k$. By using this, we only need to consider the final case $\crc{Y_{n-1},Y_{n-2}}_{q^{-1}}$ which is zero by using the same relation as the previous case. \newline

$\bullet$ For all E generators in $B_h$, there exists $l \in \mathbb{N}$ such that:
 \[ \cro{K_1\cdots K_j}{a} = (1 - q^l) K_1\cdots K_j E . \]
By using the proposition \ref{LemQ}, we finish our proof.  \end{proof}

\section{$\goso(2n+1)$}

Following the same construction, we construct a coisotropic subagebra $\goh$ in $\gog =\goso(2n+1)$ \newline
We consider $\gog$ with Cartan subalgebra given by the diagonal matrices. The roots are R=$\{\pm L_i \pm L_j\}_{i < j} \cup \{\pm L_i\}$. 
The roots that satisfy the assumption are those of the form $\{\pm L_i \pm L_j\}_{i < j}$. \newline 
The root space of $\alpha = L_i - L_j$ is spanned by $e_{\alpha} = x_{i,j} = e_{i,j} - e_{n+j,n+i}$ and $f(\alpha) =x_{j,i}$ . For $\alpha = L_i + L_j$ it is given by $e_{\alpha} = y_{i,j} = e_{i,n+j} - e_{j,n+i}$ and $f_{\alpha}=z_{i,j}=y_{i,j}^t$. And finally for $\alpha = L_i$ it is given by $e_{\alpha} = u_i = e_{i,2n+1} - e_{2n+1,n+i}$ and $f(\alpha)=v_i = u_i^t$. We obtain the r-matrix
\[ \pi = \lambda \left( \frac12 \sum_{i < j}\left( x_ij \wedge x_ji + y_ij \wedge z_ij \right) + \sum_i u_i \wedge v_i \right) \]
where $\lambda \in \mathbb{R}^*$. \newline

$\beta=L_i-L_j$. The coisotropic subalgebra $\goh$ that we obtain, for a fixed i and j, in $\gog$ is generated by :
\[ \{ x_{ik} , x_{kj}\}_{i<k <j}, x_{ij}, \cro{x_{i,j}}{x_{j,i}} = h_i + h_{i+1} + \cdots + h_j \]
Where $\{h_i = e_ii - e_{i+1,i+1} - e_{n+i,n+i} + e_{n+i+1,n+i+1}, h_n = e_{n,n} - e_{2n,2n}\}_{1\le i \le n-1}$ is the basis of the Cartan subalgebra which is in terms of chevalley generators:
\[h_i + h_{i+1} + \cdots + h_{j-1},\ e_i ,\ \cro{e_i}{e_{i+1}}, \ \cro{\cro{e_i}{e_{i+1}}}{e_{i+2}} , \ \dots \ , \ \cro{\cro{e_i}{e_{i+1}}}{\dots ,e_{j-1}} \]
\[ e_{j-1} ,\ \cro{e_{j-1}}{e_{j-2}}, \ \cro{\cro{e_{j-1}}{e_{j-2}}}{e_{j-3}} , \ \dots \ , \ \cro{\cro{e_{j-1}}{e_{j-2}}}{\dots,e_{i+1}} \]
 This example is the same as the case of $\gosl(n)$. \newline

$\beta=L_i+L_j$. The coisotropic subalgebra $\goh$ that we obtain, for a fixed i and j, in $\gog$ is generated by :
\[ \{ X_{ik} , Y_{kj}\}_{i<k \neq j},\{ X_{jk} , Y_{ki}\}_{j<k}, Y_{ij},\ H_i - H_j \]
without loosing any generality one can restrict the study to i=1. But we will distinct two cases. \newline

$\bullet$ The first one if $j=n$ for which we will obtain  the coisotropic subalgebra $\goh$ in $\gog$ generated by:
\begin{align*} (a) & \ h_1 + h_2 + \cdots + h_{n-1}, \ e_1, \ \crc{e_1,e_2}, \ \crc{\crc{e_1,e_2},e_3},\ \dots \ \crc{\crc{e_1,e_2},\dots,e_{n-2}},\\
(b) & \ e_n, \crc{\crc{\crc{e_1,e_2}\dots,e_{n-1}},e_n}, \\
(c) & \ y=\crc{e_n,\crc{e_n,e_{n-1}}}, \ \crc{y,e_{n-2}}, \dots , \crc{\crc{y,e_{n-2}},\dots,e_1} \end{align*}

\begin{equation*} \small \left( \begin{array}{ccccc|ccccc|c} 
                     a_0 & a_1 &\cdots & a_{n\text{-}2} & 0        & 0         &     0     & \cdots & 0         &\text{-}c_1 & b_2\\
                           & 0     &  \cdots & 0        &0        &  0         &          &            &     0       & \text{-}c_2 & 0  \\
                           &        & \ddots &  \vdots &\vdots & \vdots &           &           & \vdots       & \vdots & \vdots\\
                           &       &              & 0         &   0      & 0         & 0 &  \cdots  &  0        & \text{-}c_{n\text{-}1} & 0 \\  
                          &        &              &           & \text{-}a_0    & c_1  & c_2     & \cdots & c_{n\text{-}1} & 0 & b_1 \\
\hline 
                          &        &             &            &            & \text{-}a_0   &              &            &             &        &\\
                          &        &             &            &            & \text{-}a_1   &   0         &            &             &        &\\
                           &        &             &            &           & \vdots  &   \vdots &   \ddots &             &    &    \\
                            &        &             &            &          & \text{-}a_{n\text{-}2}&       0    &    \cdots&   0      &        &\\
                             &        &             &            &            & 0        & 0  & \cdots  &    0    & a_0          &\\
\hline                &        &             &            &            & \text{-}b_2       & 0  & \cdots  &    0    & \text{-}b_1          & 0\\
 \end{array} \right) \end{equation*}
 it's counterpart $B_h$ in $U_q(\goso(2n+1))$ is generated by:
\small{\begin{align*} (a) &\ \  K_1\cdots .K_n, \ E_1 ,\ \cro{E_1}{E_2}_{q^2}, \ \cro{\cro{E_1}{E_2}_{q^2}}{E_3}_{q^2} , \ \dots \ , \  \cro{\cro{E_1}{E_2}_{q^2}}{\dots,E_{n-2}}_{q^2} \\
 (b) &  \ \ E_n , \ \crc{E_n,\cro{\cro{E_1}{E_2}_{q^2}}{\dots,E_{n-1}}_{q^2}}_{q^2} \\
(c) & \ \ \crc{E_n,\crc{E_n,E_{n-1}}_{q^2}}, \cro{\crc{E_n,\crc{E_n,E_{n-1}}_{q^2}}}{E_{n-2}}_{q^2}, \ \dots \ , \  \cro{\crc{E_n,\crc{E_n,E_{n-1}}_{q^2}}}{\dots,E_1}_{q^2} \end{align*}}
\begin{Prop} 
the subalgebra $B_h$ is a left coideal in $U_q(\goso(2n+1))$
\end{Prop}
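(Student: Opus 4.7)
The task is to check that $\Delta(g)\in B_h\otimes U_q(\goso(2n+1))$ for each generator $g$ listed in (a), (b), (c). My strategy, following the template of the preceding sections ($\gosl(n+1)$, $\goso(2n)$), is to treat the three families one by one: for each, an induction on the length of the iterated bracket yields an explicit expansion of $\Delta(g)$ whose left tensor factors are generators of $B_h$ or the identity.

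For family (a), the choice of $q^2$-brackets rather than $q$-brackets is dictated by the root-length asymmetry of $B_n$: the simple roots $\alpha_1,\ldots,\alpha_{n-1}$ are long, so relation (1) in the definition of $U_q(\gog)$ asserts the vanishing of a $q^2$-commutator $\cro{K_i}{E_{i+1}}_{q^{-2}}=0$. The induction from the $\gosl(n+1)$ section then transfers verbatim after substituting $q\mapsto q^2$, and yields, for $X_k=\cro{\cro{E_1}{E_2}_{q^2}}{\ldots,E_k}_{q^2}$, a formula of the shape $\Delta(X_k)=1\otimes X_k+\sum_{j=1}^{k-1}X_j\otimes A_j^{(k)}+X_k\otimes K_1\cdots K_k$ with each left tensor factor in $B_h$. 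The grouplike element $K_1\cdots K_n$ is handled trivially.

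For family (b), $\Delta(E_n)=E_n\otimes K_n+1\otimes E_n$ is immediate; for $Z=\cro{E_n}{X'_{n-1}}_{q^2}$ with $X'_{n-1}=\cro{\cro{E_1}{E_2}_{q^2}}{\ldots,E_{n-1}}_{q^2}$, I would combine the known $\Delta(E_n)$ and $\Delta(X'_{n-1})$ via bilinearity of the $q^2$-bracket on tensor products, and verify that every cross term is admissible. The potentially obstructing contributions come from factors of the form $\cro{K_n}{E_j}_{q^2}$, and these are killed by the Cartan--Chevalley relations when $j\le n-2$; for $j=n-1$ the outer $q^2$-bracket is precisely what cancels the offending term. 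For family (c), the outer bracket in $\cro{E_n}{\cro{E_n}{E_{n-1}}_{q^2}}$ is an ordinary bracket: once the inner $q^2$-bracket has neutralised $\cro{K_n}{E_{n-1}}_{q^2}$, the next commutator $\cro{K_n}{\cro{E_n}{E_{n-1}}_{q^2}}$ is already zero. Starting from $\Delta(\cro{E_n}{E_{n-1}}_{q^2})$ and iterating $q^2$-brackets with $E_{n-2},\ldots,E_1$, the same inductive scheme as in family (a) produces an expansion of the required form.

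The main obstacle will be bookkeeping. As in the $\goso(2n)$ case, writing out the full comultiplication for the longer generators of (b) and (c) spawns a cascade of terms, and each must be checked individually. No new algebraic idea is required beyond the $q$-bracket manipulations and the auxiliary lemma used earlier; the $B_n$-specific subtlety is that the natural bracket choice alternates between $q^2$-bracket and ordinary bracket in step with the long/short simple root pattern, so the verification is computationally denser than in $\goso(2n)$ but structurally identical.
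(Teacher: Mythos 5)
Your overall architecture is the paper's own: verify the three families separately, handle family (a) by the $\gosl(n+1)$ induction with $q\mapsto q^2$ (your root-length justification for the $q^2$-brackets is correct, since $d_ia_{i,i+1}=-2$ for the long simple roots), and expand the coproducts of the longer generators inductively so that every left tensor factor lies in $B_h$. However, your identification of the obstructing terms is systematically transposed, in two ways. First, in a left-coideal check only the left tensor factor matters, so terms such as $E_n\otimes\cro{K_n}{E_{n-1}}_{q^2}$ never need to vanish; moreover your claim that $\cro{K_n}{E_j}_{q^2}=0$ for $j\le n-2$ is false: $K_n$ and $E_j$ commute there, so the plain bracket vanishes while $\cro{K_n}{E_j}_{q^2}=(1-q^2)E_jK_n\neq0$. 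The genuine obstruction in family (b) is the left factor $X'_{n-1}=\cro{\cro{E_1}{E_2}_{q^2}}{\dots,E_{n-1}}_{q^2}$, which is \emph{not} in $B_h$ (family (a) stops at $E_{n-2}$); its coefficient is $\cro{E_n}{K_1\cdots K_{n-1}}_{q^2}$, which vanishes because $d_{n-1}a_{n-1,n}=-2$. So your conclusion that the outer $q^2$-bracket saves the day is right, but the relevant identity is the transposed one, $\cro{E_n}{K_{n-1}}_{q^2}=0$, not $\cro{K_n}{E_{n-1}}_{q^2}=0$ (the latter is nonzero and harmlessly survives with left factor $E_n\in B_h$).

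Second, in family (c) the offending left factor is $\cro{E_n}{E_{n-1}}_{q^2}\notin B_h$, whose coefficient in $\Delta(\crc{E_n,\cro{E_n}{E_{n-1}}_{q^2}})$ is the plain commutator $\cro{E_n}{K_nK_{n-1}}$; this is the vanishing the paper invokes ($d_na_{nn}+d_{n-1}a_{n-1,n}=2-2=0$), and it is what justifies leaving the outer bracket unquantized. The identity you cite instead, $\cro{K_n}{\cro{E_n}{E_{n-1}}_{q^2}}=0$, is true (the same weight computation) but is not the coefficient that actually occurs. Since every vanishing that is genuinely needed does hold, your plan would succeed once executed carefully --- the paper's proof is precisely that execution, including the explicit inductive formula for $\Delta({}_nY_j)$ whose left factors are $1$, $E_n$, $E_n^2$ (a product, admissible since $B_h$ is a subalgebra) and the ${}_nY_k$ --- but as written your proposal directs the verification at commutators that either do not vanish or do not appear, and in a coideal check the correct bookkeeping of which cross terms sit in the left slot and fall outside $B_h$ is the entire content of the proposition.
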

\begin{proof}
For the first set of generators, it is like we always do. The second set of generators is trivial by considering the fact that:
\[ \Delta( \cro{E_n}{E_{n-1}}_{q^2} ) = 1 \otimes \cro{E_n}{E_{n-1}}_{q^2} + E_n \otimes \cro{K_n}{E_{n-1}}_{q^2} + \cro{E_n}{E_{n-1}}_{q^2} \otimes K_nK_{n-1} \]
Now for the third set of generators, we will compute $\Delta(\crc{E_n,\crc{E_n,E_{n-1}}_{q^2}})$, only the term  $\cro{1 \otimes E_n}{\cro{E_n}{E_{n-1}}_{q^2} \otimes K_nK_{n-1}} $ will be an obstruction. But we can see that $\cro{E_n}{K_nK_{n-1}} = 0$ implying that $\cro{1 \otimes E_n}{\cro{E_n}{E_{n-1}}_{q^2} \otimes K_nK_{n-1}} =0$. Meaning that:
\begin{align*}   \Delta(\crc{E_n,\crqq{E_n,E_{n-1}}})  = & 1 \otimes  \crc{E_n,\crqq{E_n,E_{n-1}}} \\
                                                              & + \ \ E_n \otimes \crc{E_n,\crqq{K_n,E_{n-1}}} + \crc{K_n,\crqq{E_n,E_{n-1}}} \\
                                                              & + \ \ E_n^2 \otimes \crc{K_n,\crqq{K_n,E_{n-1}}} \\
                                                              & + \ \ \crqq{E_n,\crqq{E_n,E_{n-1}}} \otimes K_n^2K_{n-1} \end{align*}  
The last set of generators is done by computing the generators one by one. \newline
one can check by computation that for ${}_nY_j = \crqq{\crqq{\crc{E_n,\crqq{E_n,E_{n-1}}},E_{n-2}}\dots,E_j}$
\begin{align*}  \Delta({}_nY_j)  = & 1 \otimes {}_nY_j  +  E_n \otimes \Big( \crqq{\crqq{\crc{K_n , \crqq{E_n,E_{n-1}}},E_{n-2}}\dots,E_j}  \\
& \ \ \ \ \ \ \ \ \ \ \ \ \ \ \ \ \ \ \ \ \   +  \crqq{\crqq{\crc{E_n, \crqq{K_n,E_{n-1}}},E_{n-2}}\dots,E_j} \Big) \\  
& + \ \ E_n^2 \otimes \crqq{\crqq{\crc{K_n, \crqq{K_n,E_{n-1}}},E_{n-2}}\dots,E_j} +  {}_nY_{n-1} \otimes \crqq{\crqq{K_n^2K_{n-1},E_{n-2}}\dots,E_j} \\
& + \ \ {}_nY_{n-2} \otimes \crqq{\crqq{K_n^2K_{n-1}K_{n-2},E_{n-3}}\dots,E_j}  + \  \cdots \ +  {}_nY_j \otimes   K_n^2K_{n-1}K_{n-2} \cdots K_j \end{align*}      \end{proof}
\begin{Theo} 
$B_h$ is a quantization of $\goh$.
\end{Theo}
\begin{proof} we will prove that $B_h$ is a flat deformation, by computation.\newline

$\bullet$ $A_1,A_2 \in ((a),(a))$, the demonstration is the same as in $\gosl(n+1)$ (with all the bracket becoming $q^2$).  \newline

$\bullet$ $A_1,A_2 \in ((a),(b))$, it is trivial, as we have that $X_k$ commutes with $E_n$ and that it $\crc{X_k,X_{n-1} }_{q^{-2}} = 0.$. \newline

$\bullet$ $A_1,A_2 \in ((a),(c))$, it is done exactly the same as in $\goso{2n}$ except that all the brackets are $q^2$. \newline

$\bullet$ $A_1,A_2 \in ((b),(b))$, we only need to consider $\crc{E_n,\crc{E_n,\crc{E_n,E_{n-1}}_{q^2}} }_{q^{-2}}$ which is zero by using the Serre relations. \newline
 
$\bullet$ $A_1,A_2 \in ((b),(c))$, it is trivial to see that $\crc{E_n,_nY_j }_{q^{-2}} = 0$. Therefore, we only need to verify that \[\crc{\crc{E_n,\crc{E_n,E_{n-1}}_{q^2}},\crc{\crc{E_n,\crc{E_n,E_{n-1}}_{q^2}},E_{n-2}}_{q^2} }_{q^{-2}} = 0 \]
To prove this we will use the following method. First, let's set $A=E_n$, $B=E_{n-1}$ and $C=\crc{E_{n-1},E_{n-2}}$.
We have
\[ \crc{\crc{E_n,\crc{E_n,E_{n-1}}_{q^2}},\crc{\crc{E_n,\crc{E_n,E_{n-1}}_{q^2}},E_{n-2}}_{q^2} }_{q^{-2}} = \crc{\crc{A,\crc{A,B}_{q^2}},\crc{A,\crc{A,C}_{q^2}} }_{q^{-2}}.\]
Furthermore, we have the following relations
\small{\begin{align*} R_B :=& \crc{A,\crc{A,\crc{A,B}_{q^2}} }_{q^{-2}} = 0 \\
                      R_C :=& \crc{A,\crc{A,\crc{A,C}_{q^2}} }_{q^{-2}} = 0 \\
                   R_{BAC}:=& \crc{B,\crc{A,C}_{q^2}} = 0 \end{align*} }
We will then prove that
\small{\begin{align*}
\crc{\crc{A,\crc{A,B}_{q^2}},\crc{A,\crc{A,C}_{q^2}}}_{q^{-2}} =&  aR_BAC + bR_BCA + cAR_BC + dBR_CA + eBAR_C+fABR_C \\
                                                       &+ \ a'R_CAB + b' R_CBA + c'AR_CB + d'CR_BA + e'CAR_B+f'ACR_B \\
                                                       &+ \ gR_{BAC}AAA + hAR_{BAC}AA+ iAAR_{BAC}A + jAAAR_{BAC} \end{align*} }
We obtain a linear system. We solve it and find one solution:
\[ \begin{array}{cccccc} a=0 & b=-\frac{1}{q^2+q^4+1} & c=\frac{q^2}{q^2+q^4+1}& d = \frac{q^4+q^2}{q^2+q^4+1}& e = q^2 & f = -\frac{q^6+2q^4+q^2+1}{q^2+q^4+1} \\
                         a'=1 & b' = -\frac{q^6+q^4+2q^2+1}{q^2+q^4+1} & c'= \frac{q^4+q^2}{q^2+q^4+1}& d'= \frac{q^4}{q^2+q^4+1} & e'=0 & f'= -\frac{q^6}{q^2+q^4+1} \\
                         & g=-1 & h = \frac{1+q^2+q^4}{q^2} & i = - \frac{1+q^2+q^4}{q^2} & j := 1 & \end{array} \]

$\bullet$ $A_1,A_2 \in ((c),(c))$, we need here to compute $\crc{ _nY_k, _nY_l }_{q^{-2}}$ with k < l. But by using the proof in $\gosl(n+1)$, we can see that for $n-2 \ge i \ge k$ , we have that $E_i$ commutes with $_nY_l$ and therefore we have:
\[ \crc{ _nY_k, _nY_l}_{q^{-2}} = \crc{\crc{\crc{ _nY_{n-1},_nY_l}_{q^{-2}},E_{n-2}}_{q^2},\dots,E_k}_{q^2} \]
which is zero considering the last proof. \newline

$\bullet$ Of course like the preceding proof, we have that for all E generators in $B_h$, there exist $l \in \mathbb{N}$ such that :  \[ \cro{K_1\cdots K_{j-1}}{E} = (1 - q^l) K_1\cdots K_{j-1} E.\]
By using the proposition \ref{LemQ}, we finish our proof.
\end{proof}

$\bullet$ The second one if $j \neq n$, will be more complicated.
First of all, the candidate $B_h$ will be generated by :
\small{\begin{align*} (a) & \ \  K_1\cdots K_{j-1},\ E_1 ,\ \crqq{E_1,E_2}, \ \crqq{\crqq{E_1,E_2},E_3} , \ \dots \ , \  \crqq{\crqq{E_1,E_2}\dots,E_{j-2}} \\
(b) & \ \  E_j ,\ \crqq{E_j,E_{j+1}}, \ \dots \ , \ {}_jX_{n-1} = \crqq{ {}_jX_{n-2},E_{n-1}} \\
(c) & \ \ \crqq{E_j,T} , \ \crqq{\crqq{E_j,E_{j+1}},T}, \ \dots \ , \  \crqq{\crqq{\crqq{E_j,E_{j+1}}\dots,E_{n-1}},T} \\
 (d) & \ \ {}_jX_n  = \crqq{{}_jX_{n-1},E_n}, \ {}_jY_n = \crc{ {}_jX_n,E_{n}} , \ \crqq{ {}_jY_n,E_{n-2}} , \ \dots \ , \  {}_jY_{j+1} = \crqq{ {}_jY_{j+2},E_{j+1}} \\
 (e) & \ \ \crqq{ {}_jX_n,T} , \ \crqq{ {}_jY_n,T}, \ \crqq{\crqq{ {}_jY_n,E_{n-2}},T}, \ \dots \ , \
\crqq{ {}_jY_{j+1},T} \\
(f) & \ \crqq{ {}_jY_{j+1},\crqq{E_j,E_{j-1}}}, \dots \ , \ \crqq{\crqq{\crqq{ {}_jY_{j+1},\crqq{E_j,E_{j-1}}},E_{j-2}}\dots,E_1} \end{align*}}

\begin{equation*} \scriptsize  \left(\begin{array}{cccccccc|cccccccc|c}
a_0& a_1 &\cdots& a_{j\text{-}2}& 0      &  c_j    &\cdots & c_{n\text{-}1}    & 0    &0  &\cdots&  0   & \text{-}f_1 & e_j &\cdots& e_{n\text{-}1}  & e_n \\
     & 0     &\cdots& 0        &0      &  0     &\cdots &   0      &0  &     &       &     0      & \text{-}f_2 & 0 & \cdots  & 0   & 0  \\
    &        &\ddots&\vdots &\vdots&\vdots&   & \vdots& \vdots&    &     &  \vdots&\vdots&\vdots& & \vdots & \vdots\\
  &        &          & 0         &   0      & 0       & \cdots &    0  &      0   & 0  &\cdots& 0   &\text{-}f_{j\text{-}1}&  0 &\cdots& 0 & 0\\ 
   &        &          &           & \text{-}a_0  & b_j     &  \cdots & b_{n\text{-}1} & f_1 &f_2& \cdots& f_{j\text{-}1} & 0 & d_j &\cdots& d_{n\text{-}1} & d_n \\
   &        &          &           &          &  0       & \cdots & 0       & \text{-}e_j &    0  &\cdots   & 0          & \text{-}d_j& 0 & \cdots & 0 & 0\\
   &        &          &           &          &          &  \ddots&\vdots&\vdots& \vdots &            &\vdots& \vdots&\vdots&  & \vdots & \vdots \\
 &        &          &           &          &          &                & 0      & \text{-}e_{n\text{-1}}  & 0    &\cdots  & 0          & \text{-}d_{n\text{-1}}   & 0 &\cdots& 0  & 0\\
\hline 
 &        &          &           &          &          &     &     &   \text{-}a_0 & & & & & & &  &\\
  &        &          &           &          &          &    &     &  \text{-}a_1 & 0 & & & & & & &\\
 &        &          &           &          &          &   &     & \vdots&\vdots&\ddots&        &   &  &  & &\\
 &        &          &           &          &          &    &     &\text{-}a_{j\text{-}2}& 0       & \cdots& 0    &     &  & & &\\
  &        &          &           &          &          &   &     &  0       & 0       & \cdots & 0    & a_0& & &  &\\
  &        &          &           &          &          &   &     &\text{-}c_{j}  & 0        & \cdots& 0      &\text{-}b_j& 0 & & &\\
  &        &          &           &          &          &   &     & \vdots& \vdots&          &\vdots&\vdots&\vdots &\ddots & &\\
  &        &          &           &          &          &   &     & \text{-}c_{n\text{-}1}& 0& \cdots& 0 & \text{-}b_{n\text{-}1}& 0& \cdots& 0 & \\
\hline   &        &          &           &          &          &   &     & \text{-}e_n& 0& \cdots& 0 & \text{-}d_n& 0& \cdots& 0 & \\
\end{array} \right) \end{equation*}

\begin{Prop} 
The subalgebra $B_h$ is a left coideal in $U_q(\goso(2n+1))$
\end{Prop}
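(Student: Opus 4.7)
The plan is to verify that $\Delta(X) \in B_h \otimes U_q(\goso(2n+1))$ for every generator $X$ in the six families (a)--(f). The strategy is to transport, family by family, the inductive arguments already developed in the earlier sections for $\gosl(n+1)$, $\goso(2n)$ and the $j=n$ subcase of $\goso(2n+1)$, adapting ordinary $q$-brackets to $q^2$-brackets whenever the short root $E_n$ is involved (so that the relevant $K_i$-with-$E_{i+1}$ commutation relation becomes homogeneous).

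First, for families (a) and (b), I would reuse almost verbatim the inductive computation of $\Delta(\crqq{\crqq{E_1,E_2}\dots,E_k})$ and $\Delta({}_jX_k)$ from the $\gosl(n+1)$ section, because these nested brackets involve only the simple roots $E_1,\dots,E_{n-1}$ of type $A$. The point is that at each induction step the ``bad'' mixed Cartan term vanishes thanks to $\crqq{K_i,E_{i+1}} = 0$, and what survives is a sum whose left tensor factors are previously built generators in $B_h$. For family (c) I would argue, as in the $\goso(2n)$ case, that the only potentially troublesome term in $\Delta(\crqq{{}_jX_k,T})$ is the cross-bracket $\cro{1\otimes {}_jX_k}{T\otimes {}_1K_{j-1}}$, which vanishes because $\crqq{{}_jX_k,{}_1K_{j-1}} = 0$ (the generators ${}_jX_k$ only use $E_j,\dots,E_k$ and these $q^2$-commute with $K_1\cdots K_{j-1}$).

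Family (d) is the place where the type $B$ feature actually enters. Here I would first compute $\Delta({}_jY_n)$ directly from $\Delta({}_jX_n)$ and $\Delta(E_n) = E_n \otimes K_n + 1 \otimes E_n$; this produces terms with left factors $1$, $E_n$, $E_n^2$, ${}_jX_l$ and ${}_jY_n$, all of which lie in $B_h$ (note in particular $E_n^2 \in B_h$ since $E_n$ is a generator). I would then prove inductively, for $k$ decreasing from $n-1$ to $j+1$, a closed formula for $\Delta({}_jY_k) = \Delta(\crqq{{}_jY_{k+1},E_k})$, by mirroring the $\goso(2n)$ induction but keeping track of the doubled $E_n$. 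Families (e) and (f) are then reduced to (c) and to the corresponding $\goso(2n)$ computation respectively, since the generators in (e) are $q^2$-brackets of elements in (d) with $T$, and those in (f) are successive $q^2$-brackets of ${}_jY_{j+1}$ with the nested $\crqq{E_j,E_{j-1}},E_{j-2},\dots,E_1$ which commutes with everything in (d).

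The main obstacle I anticipate is the bookkeeping in family (d). Because $E_n$ appears twice inside ${}_jY_n$ and $d_n$ differs from the other $d_i$, the naive expansion of $\Delta({}_jY_k)$ produces a proliferation of terms of the form $(\text{monomial in }E_n)\otimes(\text{nested bracket containing }K_n^2, K_n K_{n-1},\ldots)$, and one must check that \emph{every} such left factor is a product of generators already introduced in (a)--(d); the balance between the $q$-bracket and the $q^2$-bracket in the definition of ${}_jY_n$ versus the surrounding brackets is exactly what makes the corresponding right factor land in $U_q(\goso(2n+1))$. Once this single explicit formula is established, the remainder of the proposition is routine propagation.
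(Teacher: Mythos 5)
Your global strategy --- transporting the (a)--(c) computations from the $\gosl(n+1)$ and $\goso(2n)$ sections, concentrating the real work in family (d) by computing $\Delta({}_jY_n)$ from $\Delta({}_jX_n)$ and $\Delta(E_n)=E_n\otimes K_n+1\otimes E_n$, then propagating to (e) and (f) --- is exactly the paper's route. But your key claim in family (d) is wrong: you assert that the expansion of $\Delta({}_jY_n)$ produces left tensor factors $1$, $E_n$, $E_n^2$, ${}_jX_l$, ${}_jY_n$, and that this is harmless ``since $E_n$ is a generator''. In the proposition at hand ($\beta=L_1+L_j$ with $j\neq n$), $E_n$ is \emph{not} a generator of $B_h$: family (b) stops at ${}_jX_{n-1}$ and family (d) starts at ${}_jX_n$, and indeed the simple root vector $e_n$ (of root $L_n$) does not belong to the classical coisotropic subalgebra $\goh$, so admitting $E_n\in B_h$ would destroy the semiclassical limit $B_h/hB_h=U(\goh)$. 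You have imported the mechanism of the \emph{previous} proposition (the $j=n$ subcase), where $E_n$ is a generator and left factors $E_n$, $E_n^2$ genuinely occur in $\Delta(\crc{E_n,\crqq{E_n,E_{n-1}}})$; here that mechanism is unavailable, and if bare $E_n$ left factors really survived, the proposition would be false rather than routine.

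What actually happens --- and what the paper verifies --- is a cancellation-and-recombination. Writing $\Delta({}_jY_n)=\crc{\Delta({}_jX_n)}{1\otimes E_n+E_n\otimes K_n}$, only the bracket against $E_n\otimes K_n$ is delicate. For each term ${}_jX_k\otimes W_k$ of $\Delta({}_jX_n)$ with $j\le k\le n-2$ (and likewise for $1\otimes{}_jX_n$ and $E_j\otimes W_j$), $E_n$ commutes with the left factor, so the bracket collapses to ${}_jX_k E_n\otimes\crc{W_k,K_n}$, and this \emph{vanishes}: the weight of $W_k$ is $\alpha_{k+1}+\cdots+\alpha_n=L_{k+1}$, which is orthogonal to $\alpha_n=L_n$ in type $B_n$, so $K_n$ commutes with $W_k$. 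The single surviving mixed term is the one with left factor ${}_jX_{n-1}$, where ${}_jX_{n-1}E_n-q^2E_n\,{}_jX_{n-1}=\crqq{{}_jX_{n-1},E_n}={}_jX_n$ recombines, contributing ${}_jX_n\otimes\cro{{}_jK_{n-1}}{E_n}_{q^2}K_n$; the last term contributes ${}_jY_n\otimes{}_jK_nK_n$. Hence the left factors of $\Delta({}_jY_n)$ are exactly $1$, $E_j$, ${}_jX_{j+1},\dots,{}_jX_n$, ${}_jY_n$ --- no power of $E_n$ ever appears. Without this vanishing-plus-recombination step, the base case of your downward induction for $\Delta({}_jY_k)$, $k<n$, is unjustified, so family (d) as you argue it does not go through.
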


\begin{proof}
The proof for the first three set of generators is exactly the same as in $\goso(2n)$. For the fourth set of generators, it is exactly like the previous example in $\goso(2n+1)$. Let's set ${}_jY_k = \crc{\crc{\crc{ {}_jX_n,E_n},E_{n-1}}\dots,E_k}$ for $k \ge j+1$
\small{\begin{align*} \Delta({}_jX_n)  = & 1 \otimes {}_jX_n   +  E_j \otimes \cro{\cro{\cro{K_j}{E_{j+1}}_{q^2}}{\dots,E_{n-1}}_{q^2}}{E_n}_{q^2}  \\
                                     & +  {}_jX_{j+1} \otimes \cro{\cro{\cro{K_jK_{j+1}}{E_{j+2}}_{q^2}}{\dots,E_{n-1}}_{q^2}}{E_n}_{q^2} \\
                                    & +\  \cdots \ +  {}_jX_{n-1} \otimes \cro{{}_jK_{n-1}}{E_n}_{q^2}   +  {}_jX_n \otimes {}_jK_n \end{align*}}
for ${}_jY_n$
\[ \Delta({}_jY_n)= \cro{ \Delta({}_jX_n)}{1 \otimes E_n + E_n \otimes K_n}  \]
We only need to look at $\cro{ \Delta({}_jX_n)}{E_n \otimes K_n}$.  It is easy to see that for $j \le k \le n-2$, we have that $E_n$ commutes with ${}_jX_k$ and that $\crc{\crc{\cro{K_jK_{j+1} \cdots K_{k}}{E_{k+1}}_{q^2},E_n}_{q^2},K_n}_{q^2} =0$. Also the last term $\crc{ {}_jX_n \otimes K_jK_{j+1} \cdots K_{n-1}K_n,E_n \otimes K_n}$ is not an obstruction. We need to consider the term :
\small{\begin{align*}
\crc{ {}_jX_{n-1} \otimes \cro{{}_jK_{n-1}}{E_n}_{q^2},E_n \otimes K_n }  = & {}_jX_{n-1} E_n \otimes \cro{{}_jK_{n-1}}{E_n}_{q^2} K_n  \\  & - E_n \ {}_jX_{n-1} \otimes K_n \cro{{}_jK_{n-1}}{E_n}_{q^2} \\
                                                                            = & \ {}_jX_{n-1} E_n - q^2 E_n \ {}_jX_{n-1} \otimes \cro{{}_jK_{n-1}}{E_n}_{q^2} K_n \\
                                                                            = & \ {}_jX_n \otimes \cro{{}_jK_{n-1}}{E_n}_{q^2} K_n \end{align*}}
In the end, we find for ${}_jY_n$,
\begin{align*} \Delta({}_jY_n)  = & 1 \otimes {}_jY_n   + E_j \otimes  \cro{\cro{\cro{\cro{K_j}{E_{j+1}}_{q^2}}{\dots,E_{n-1}}_{q^2}}{E_n}_{q^2}}{E_n} \\
                            & + {}_jX_{j+1} \otimes \cro{\cro{\cro{\cro{K_jK_{j+1}}{E_{j+2}}_{q^2}}{\dots,E_{n-1}}_{q^2}}{E_n}_{q^2}}{E_n} \\
                            & + \  \cdots  \ + {}_jX_{n-1} \otimes  \cro{\cro{{}_jK_{n-1}}{E_n}_{q^2}}{E_n}\\
                            & + {}_jX_n \otimes \left( \cro{{}_jK_{n}}{E_n} + \cro{{}_jK_{n-1}}{E_n}K_n \right)\\
                            & + {}_jY_n \otimes {}_jK_{n} K_n   \end{align*}
The rest of the proof consists of the same demonstration as in $\goso(2n)$. \end{proof}
\begin{Theo} 
$B_h$ is a quantization of $\goh$.
\end{Theo}
\begin{proof} \newline
The proof here is done like the previous one (a mix between the last one and the one of $\goso(2n)$).
%For $A_1,A_2 \in ((a),(a))$, the proof is like that of $\gosl(n+1)$. \newline
%For $A_1,A_2 \in ((a),(b))$, it is easy to see that they commute. \newline
%For $A_1,A_2 \in ((a),(c))$, it is equivalent to $\cro{A_1}{T}$ where $T = \cro{\cro{E_1}{E_2}}{\dots,E_{j-1}}$. by using the same argument as the proof in $\gosl(n+1)$, we can prove that $\cro{A_1}{T} = 0$. Therefore because we have that $A_1$ commute with the elements in (b) then it commute with $A_2$. \newline
%For $A_1,A_2 \in ((a),(d))$, $\crc{A_1,A_2}=0$ because they commute. \newline
%For $A_1,A_2 \in ((a),(e))$, it is equivalent to $\cro{(A_1)}{T}$ where $T = \cro{\cro{E_1}{E_2}}{\dots,E_{j-1}}$. by using the same argument as the proof in $\gosl(n+1)$, we can prove that $\cro{A_1}{T} = 0$. Therefore because we have that $A_1$ commute with the elements in (d) then it commute with $A_2$. \newline
%For $A_1,A_2 \in ((a),(f))$, it is done in the following way:\newline  \dots,.
\end{proof}

\section{Exceptional Lie bialgebras}

We will here construct the example on the Lie bialgebras of type $G_2$. The case of $F_4$ is trivial in this case because we have that none of the positive roots verifies the property. Therefore, we cannot construct an example.\newline
Now let's focus on the case of $G_2$. The roots are given by $R = \{\pm L_1, \pm \sqrt{3} L_2, \pm \frac12 L_1 \pm \frac{\sqrt3}2 L_2, \pm \frac32 \pm \frac{\sqrt3}2 L_2 \}$, the simple roots are $\alpha_1 = L_1$  and $\alpha_2 = - \frac32 + \frac{\sqrt3}2 L_2$.The roots that satisfy the assumption are $\pm \sqrt{3} L_2$ and $\pm \frac32 \pm \frac{\sqrt3}2 L_2$.
The root space of $L_1$ is given by $x_1 = e_1$ and $y_1 = f_1$, for  $\frac32 + \frac{\sqrt3}2 L_2$ it is given by $x_2 = e_2$ and $y_2 = f_2$, for $-\frac12 L_1 + \frac{\sqrt3}2 L_2 = \alpha_1 + \alpha_2$ it is given by $x_3 = \cro{e_1}{e_2}$ and $y_3 = \cro{f_1}{f_2}$, for $\frac12 L_1 + \frac{\sqrt3}2 L_2 = \alpha_1 + \alpha_1 + \alpha_2$ it is given by $x_4 = \cro{e_1}{x_3}$ and $y_4 = \cro{f_1}{y_3}$, for $\frac32 L_1 + \frac{\sqrt3}2 L_2 = \alpha_1 + \alpha_1 + \alpha_1 + \alpha_2$ it is given by  $x_5 = \cro{e_1}{x_4}$ and $y_5 = \cro{f_1}{y_4}$, and finally for $\sqrt3 L_2 = \alpha_2 +\alpha_1 + \alpha_1 + \alpha_1 + \alpha_2$ it is given by $x_6 = \cro{e_2}{x_5}$ and $y_6 = \cro{f_2}{y_5}$. But for the computation to be easier, we will apply the changes that were done by Fulton and Harris.
We need to compute the r-matrix:
\[ \pi = \frac1{24} \left( x_1 \wedge y_1 + x_3 \wedge y_3 + x_4 \wedge y_4 \right) \frac18 \left( x_2 \wedge y_2 + x_5 \wedge y_5 + x_6 \wedge y_6 \right) \]
we fix $\beta = \alpha_2$ therefore we compute the bracket:
\[ \cro{e_2}{\pi} = \lambda (e_2 \wedge h_2) \]
The coisotropic subalgebra is spanned by : $e_2$ and $h_1 + h_2$. This example is trivial.
We fix $\beta = \frac32 + \frac{\sqrt3}2 L_2$ therefore the bracket gives:
\[ \cro{x_5}{\pi} = 2 x_1 \wedge x_4 + x_5 \wedge h_1 + h_2 \]
Therefore, the coisotropic subalgebra $\goh$ is spanned by
\[ h_1 + h_2,\ x_1 ,\ x_4 ,\ x_5 \]
and its quantum counterpart $B_h$
\[ K_1K_2,\ E_1,\ X=\cro{\cro{E_1}{E_2}_{q^3}}{E_1}_{q^{-1}},\ Y=\cro{\cro{\cro{E_1}{E_2}_{q^3}}{E_1}_{q^{-1}}}{E_1}_q \]
\begin{Prop}
$B_h$ is a left coideal of $U_q(\gog)$
\end{Prop}
\begin{proof}
We have to check that $\Delta(B_h) \subset B_h \otimes U_q(\gog)$. It is direct for $K_1K_2$ and  $E_1$.
We have to check it for $\cro{\cro{E_1}{E_2}}{E_1}$
\[ \Delta(\cro{E_1}{E_2}) = 1 \otimes \cro{E_1}{E_2}_{q^3} + E_1 \otimes \cro{K_1}{E_2}_{q^3} + \cro{E_1}{E_2}_{q^3} \otimes K_1K_2 \]
and therefore
\begin{align*}\Delta(X) =& 1   \otimes \cro{\cro{E_1}{E_2}_{q^3}}{E_1}_{q^{-1}}  + \cro{E_1}{E_2}_{q^3} \otimes \cro{K_1K_2}{E_1}_{q^{-1}} \\ 
                                                        & +\    E_1 \left( \otimes \cro{\cro{K_1}{E_2}_{q^3}}{E_1}_{q^{-1}} + \cro{\cro{E_1}{E_2}_{q^3}}{K_1}_{q^{-1}}  \right)  \\
                                                       & +\  E_1^2 \otimes \cro{\cro{K_1}{E_2}_{q^3}}{K_1}_{q^{-1}}  + X \otimes K_1^2K_2 \end{align*}
The only term that need to disapear is $\cro{E_1}{E_2}_{q^3}$ , but we have that $\cro{K_1K_2}{E_1}_{q^{-1}}=0$. Wich justifies the use of the $q^{-1}$ bracket. The last one is given directly by the fact that both $\Delta(X)$ and $\Delta(E_1)$ are in $B_h \otimes U_q(\gog)$.
Thus proving our proposition.
\end{proof}

\begin{Theo} 
$B_h$ is a quantization of $\goh$.
\end{Theo}

\begin{proof}
Using the lemma \ref{LemQ}, we need to prove that for all generators $A_1$, $A_2$ we have that $\cro{A_1}{A_2}$ is composed of elements either well ordered, of degree 1 (the same as well ordered here) or of valuation on h greater than $A_1A_2$. \newline For $A_1 = E_1$, we have that $\cro{E_1}{\cro{\cro{E_1}{E_2}}{E_1}}$ is a generator and that $\cro{E_1}{\cro{\cro{\cro{E_1}{E_2}_{q^3}}{E_1}_{q^{-1}}}{E_1}_q }_{q^{-3}}$ is zero by using the Serre relation which is:
\[ \crc{E_1,\crc{E_1,\crc{E_1,\crc{E_1,E_2}_{q^3}}_q}_{q^{-1}} }_{q^{-3}} = 0 .\]
Therefore only one bracket remains, that is $\cro{\cro{\cro{E_1}{E_2}_{q^3}}{E_1}_{q^{-1}}}{\cro{\cro{\cro{E_1}{E_2}_{q^3}}{E_1}_{q^{-1}}}{E_1}_q}$ which is also zero by using the two Serre relations and solving a linear system using those equations.
Of course like the preceding proof, we have  that for all A generators in $B_h$, there exist $l \in \mathbb{N}$ such that $\cro{K_1K_2}{A} = (1 - q^l) K_1K_2 A$. \newline
Therefore, by using the proposition \ref{PropQ}, we finish the demonstration.
\end{proof}

Finally, for $\beta =  \sqrt{3} L_2$, we have:
\[ \cro{x_6}{\pi} = 2 x_2 \wedge x_5 + 2 x_3 \wedge x_4 + x_6 \wedge h_1 + 2 h_2 \]
Therefore, the coisotropic subalgebra $\goh$ is spanned by
\[ h_1 + 2h_2,\ x_2 ,\ x_3 ,\ x_4, \ x_5,\ x_6 \]
and its quantum counterpart

\[ K_1K_2^2,\ E_2,\ X = \cro{E_2}{E_1}_{q^3} ,\ Y=\cro{X}{E_1}_q,\ Z=\crc{Y,E_1}_{q^{-1}}, T=\crc{Z,E_2} \]

\begin{Prop}
$B_h$ is a left coideal of $U_q(\gog)$
\end{Prop}

\begin{proof}
We have to check that $\Delta(B_h) \subset B_h \otimes U_q(\gog)$. It is direct for $K_1K_2^2$ and  $E_2$. After we chose the generator so that $E_1$ vanishes on the left side of the tensor.
\[ \Delta(\cro{E_2}{E_1}_{q^3}) = 1 \otimes \cro{E_2}{E_1}_{q^3} + E_1 \otimes \cro{E_2}{K_1}_{q^3} + E_2 \otimes \cro{K_2}{E_1}_{q^3} + \cro{E_1}{E_2}_{q^3} \otimes K_1K_2 \]
 we have that $\cro{E_2}{K_1}_{q^3} = 0$. Therefore for $X = \crc{E_2,E_1}_{q^3}$
\[ \Delta(X)= 1 \otimes X + E_2 \otimes  \cro{K_2}{E_1} + X \otimes K_1K_2 \]
for the next generator a simple computation can show that we need to use q bracket to get rid of the term  $E_1 \otimes \cro{X}{K_1}_q$ as $\cro{X}{K_1}_q = 0$.
\begin{align*}\Delta(Y) =&  1 \otimes Y  +  E_2 \otimes  \cro{\cro{K_2}{E_1}_{q^3}}{E_1}_q  +   E_1 \otimes \cro{X}{K_1}_q  \\
                              &+\ X \otimes \left( \cro{K_1K_2}{E_1}_q + \cro{K_2}{E_1}_{q^3}K_1\right)  + Y \otimes K_1^2K_2 
                       \end{align*}
                         For Z as for Y, a simple computation and reordering of terms, show that we need to consider the $q^{-1}$ bracket.
\begin{align*} \Delta(Z) =& 1 \otimes Z + E_2 \otimes \crc{\cro{\cro{K_2}{E_1}_{q^3}}{E_1}_q,E_1}_{q^{-1}} \\
                                & +\ X  \otimes  \left(\crc{\cro{K_2}{E_1}_{q^3}K_1,E_1}_{q^{-1}} + \crc{\cro{K_1K_2}{E_1}_q,E_1}_{q^{-1}} + \cro{\cro{K_2}{E_1}_{q^3}}{E_1}_qK_1 \right) \\
                                & +\ Y  \otimes\left(  \crc{K_1^2K_2,E_1}_{q^{-1}}  -\cro{K_2}{E_1}_{q^3}K_1K_1\right) \\
                                & +\ Z  \otimes  K_1^3K_2
                                 \end{align*}
therefore $\Delta(Z)$ is in $B_h \otimes U_q(\gog)$ and at the same time this proves it for $T$ as $\Delta(E_2)$ and $\Delta(Z)$ are in $B_h \otimes U_q(\gog)$.
\end{proof}

\begin{Theo} 
$B_h$ is a quantization of $\goh$.
\end{Theo}

\begin{proof}
Using the lemma \ref{LemQ}, we need to prove that for all generators $a_1$, $a_2$ we have that $\cro{a_1}{a_2}$ is composed of elements either well ordered, of degree 1 (the same as well ordered here) or of valuation on h greater than $a_1a_2$. \newline
- For $A_1 = E_2$, we have to compute $\crc{E_2,\crc{E_2,E_1}_{q^3} }_{q^{-3}}$ which is zero because it is the Serre relation between $E_2$ and $E_1$.
\[ \crc{E_2,\crc{\crc{E_2,E_1}_{q^3},E_1}_q}_{0} = q^{-3} \crc{\crc{E_2,E_1}_{q^3},\crc{E_2,E_1}_{q^3}}_4 = q^{-3}(1-q^4) \crc{E_2,E_1}_{q^3}^2.   \]
then we have to compute:
\[ \crc{E_2, \crc{\crc{\crc{E_2,E_1}_{q^3},E_1}_q,E_1}_{q^{-1}}}_{0} = -T \]
and finally :
\[ \crc{E_2, T} = 0 + h*C \]
We prove that by using the Serre relations $R_2 = \crc{E_2,\crc{E_2,E_1}_{q^3} }_{q^{-3}}$ and elements in $B_h$ obtained by combining the elements $E_2, T$ or the elements $E_2,E_2,\crc{\crc{\crc{E_2,E_1}_{q^3},E_1}_q,E_1}_{q^{-1}}$ or the elements $E_2, \crc{E_2,E_1}_{q^3}, \crc{\crc{E_2,E_1}_{q^3},E_1}_q$ or $\crc{E_2,E_1}_{q^3}^3$. This allows us to have a linear system of 20 equations with 24 undetermined with some constraints on some undetermined (we want that h divides some of them). \newline
- For $A_1 = X$, we have to compute $\crc{X,Y}$ by using the same demonstration as $\crc{E_2,Z} = - T$ and $\crc{X,Z}$ and $\crc{X,T}$ by using the same demonstration as $\crc{E_2, T}$. \newline
- For $A_1 = Y$, we have to compute $ \crc{Y,Z}$ and $\crc{Y,T}$ which are still the same as $\crc{E_2,T}$. \newline
- For $A_1 = Z$, we finally have to compute $\crc{Z,T}$.
Of course like the preceding proof, we have  that for all E generators in $B_h$, there exist $l \in \mathbb{N}$ such that $\cro{K_1K_2^2}{E} = (1 - q^l) K_1K_2^2 E$. \newline
Therefore, by using the proposition \ref{PropQ}, we finish the demonstration.
\end{proof}

We will give some example in the case of $E_6$. We proceed in the exact same way as before. It will be really long to explicit every step for $E_6$ because of the number of generators and the fact that all the roots verify the assumption. Therefore, we will directly give the generators of the quantum coisotropic subalgebras. But first we need to find the r-matrix. The r-matrix that we need requires to calculate the Killing form. By using the fact that in $E_6$, all the roots are of equal length and that we can set for every root $\alpha$ that $\cro{e_{\alpha}}{f_{\alpha}}=-h_{\alpha}$ , $\cro{h_{\alpha}}{e_{\alpha}}=e_{\alpha}$, $\cro{h_{\alpha}}{f_{\alpha}}=-f_{\alpha}$. Then, we have that the Killing form $K(e_{\alpha},f_{\alpha})= \frac12K(h_{\alpha},h_{\alpha})$. And we have that if the root system is irreducible and that all the roots are of equal length then $K(h_{\alpha},h_{\alpha})=4k$ where k is the coxeter number. Therefore we have:
\[ \pi = \frac1{2k} \sum_{\alpha \in R^+} e_{\alpha} \wedge f_{\alpha} \]
We just need to take for $\alpha = \alpha_{i_1} + \cdots + \alpha_{i_r}$:
\[ e_{\alpha} = \cro{\cro{e_{\alpha_{i_1}}}{e_{\alpha_{i_{2}}}}}{\dots,e_{\alpha_{i_r}}} \in \gog^{\alpha} \]
and
\[ f_{\alpha} = (-1)^{r} \cro{\cro{f_{\alpha_{i_1}}}{f_{\alpha_{i_{2}}}}}{\dots,f_{\alpha_{i_r}}} \in \gog^{-\alpha} \]
Now we compute for every root the bracket of $e_{\alpha}$ and $\pi$ to find the coisotropic subalgebras.

\begin{Rem}
The same method can be use for $E_7$ and $E_8$, because all the roots are of the same length. For more information and a demonstration of this method we refer to \cite{[B481]} and \cite{[B781]}.
\end{Rem}
With those two tables, we have 36 examples of coisotropic subalgebras (by using the fact that for each * we can construct a symmetric coisotropic subalgebra by replacing $E_1$ by $E_6$ and $E_3$ by $E_5$). The proofs are similar to the one done in the case of $\goso(2n)$.

\scriptsize{
\begin{tabular}{|l|l|}
\hline  Roots & Candidate $B_h$ in $U_q(E_6)$ \\

\hline  $\alpha_i$ & $E_i$, $K_i$ \\

\hline  $\alpha_1+\alpha_3$ *& $E_1$, $E_3$, $\cro{E_1}{E_3}$, $K_1K_3$ \\

\hline  $\alpha_3+\alpha_4$ *& $E_3$, $E_4$, $\cro{E_3}{E_4}$, $K_3K_4$ \\

\hline  $\alpha_2+\alpha_4$ *& $E_2$, $E_4$, $\cro{E_2}{E_4}$, $K_2K_4$ \\

\hline  $\alpha_1 + \alpha_3 + \alpha_4$ *& $E_1$, $E_4$, $\cro{E_1}{E_3}$, $\cro{E_4}{E_3}$, $\cro{\cro{E_1}{E_3}}{E_4}$,    $K_1K_3K_4$ \\

\hline  $\alpha_3 + \alpha_4 + \alpha_5$ & $E_3$, $E_5$, $\cro{E_3}{E_4}$, $\cro{E_5}{E_4}$, $\cro{\cro{E_3}{E_4}}{E_5}$,    $K_3K_4K_5$ \\

\hline  $\alpha_3 + \alpha_4 + \alpha_2$ *& $E_3$, $E_2$, $\cro{E_3}{E_4}$, $\cro{E_2}{E_4}$, $\cro{\cro{E_3}{E_4}}{E_2}$,    $K_3K_4K_2$ \\

\hline  $\alpha_1 + \alpha_3 + \alpha_4 $& $E_1$, $E_5$, $\cro{E_1}{E_3}$, $\cro{E_5}{E_4}$,  $\cro{\cro{E_1}{E_3}}{E_4}$, $\cro{\cro{E_5}{E_4}}{E_3}$, \\ $ \ \ \ \ + \alpha_5$ *
 &  $\cro{\cro{\cro{E_1}{E_3}}{E_4}}{E_5}$,     $K_1K_3K_4K_5$ \\

\hline  $\alpha_1 + \alpha_3 + \alpha_4 $ & $E_1$, $E_2$, $\cro{E_1}{E_3}$, $\cro{E_2}{E_4}$,  $\cro{\cro{E_1}{E_3}}{E_4}$, $\cro{\cro{E_2}{E_4}}{E_3}$,\\ $ \ \ \ \ + \alpha_2$ *
& $\cro{\cro{\cro{E_1}{E_3}}{E_4}}{E_2}$,     $K_1K_3K_4K_2$ \\

\hline  $ \alpha_3 + \alpha_4 + \alpha_5 $ &  $E_3$, $E_2$, $E_5$, $\cro{E_5}{\cro{E_2}{E_4}}$, $\cro{E_5}{\cro{E_3}{E_4}}$, \\
$ \ \ \ \ + \alpha_2$ & $\cro{E_2}{\cro{E_3}{E_4}}$, $\cro{E_5}{\cro{E_2}{\cro{E_3}{E_4}}}$, $K_3K_4K_5K_2$ \\

\hline  $\alpha_1 + \alpha_3 + \alpha_4 $ & $E_1$, $E_6$, $\cro{E_1}{E_3}$, $\cro{E_6}{E_5}$,  $\cro{\cro{E_1}{E_3}}{E_4}$,\\ 
$ \ \ \ \ + \alpha_5 + \alpha_6$ & $\cro{\cro{E_6}{E_5}}{E_4}$, $\cro{\cro{\cro{E_1}{E_3}}{E_4}}{E_5}$, $\cro{\cro{\cro{E_6}{E_5}}{E_4}}{E_3}$, \\ &$\cro{\cro{\cro{\cro{E_1}{E_3}}{E_4}}{E_5}}{E_6}$,     $K_1K_3K_4K_5K_6$ \\

\hline  $\alpha_1 + \alpha_3 + \alpha_4 $& $E_1$, $E_2$, $E_5$, $\cro{E_1}{E_3}$, $\cro{E_5}{\cro{E_2}{E_4}}$, $\cro{E_5}{\cro{\cro{E_1}{E_3}}{E_4}}$,\\ $ \ \ \ \ + \alpha_5 + \alpha_2$ *
& $\cro{E_2}{\cro{\cro{E_1}{E_3}}{E_4}}$, $\cro{E_5}{\cro{E_2}{\cro{\cro{E_1}{E_3}}{E_4}}}$, $K_1K_3K_4K_5K_2$ \\

\hline  $\alpha_1 + \alpha_3 + \alpha_4 $ & $E_1$, $E_2$, $E_6$, $\cro{E_1}{E_3}$, $\cro{E_6}{E_5}$,  $\cro{E_2}{\cro{\cro{E_1}{E_3}}{E_4}}$,\\ $ \ \ \ \ + \alpha_5 + \alpha_6 + \alpha_2$
& $\cro{E_2}{\cro{\cro{E_6}{E_5}}{E_4}}$, $\cro{E_2}{\cro{\cro{\cro{E_1}{E_3}}{E_4}}{E_5}}$,\\ & $\cro{E_6}{\cro{\cro{\cro{E_1}{E_3}}{E_4}}{E_5}}$,  $\cro{E_2}{\cro{\cro{\cro{E_6}{E_5}}{E_4}}{E_3}}$,\\& $\cro{E_2}{\cro{E_6}{\cro{\cro{\cro{E_1}{E_3}}{E_4}}{E_5}}}$, $K_1K_3K_4K_5K_6K_2$ \\

\hline $\alpha_3 + 2 \alpha_4 + \alpha_5$ & $E_4$, $\cro{E_4}{E_2}$, $\cro{E_4}{E_5}$, $\cro{E_4}{E_3}$, $\cro{\cro{E_4}{E_3}}{E_5}$,\\ $ \ \ \ \  + \alpha_2$
& $\cro{\cro{E_4}{E_5}}{E_2}$, $\cro{\cro{E_4}{E_3}}{E_2}$, $\cro{\cro{\cro{E_4}{E_3}}{E_5}}{E_2}$, \\
& $\cro{E_4}{\cro{\cro{\cro{E_4}{E_3}}{E_5}}{E_2}}$, $K_3K_4^2K_5K_2$ \\

\hline $\alpha_1 + \alpha_3 + 2 \alpha_4 $& $E_1$ , $E_4$, $\cro{E_4}{E_2}$, $\cro{E_4}{E_5}$, $\cro{\cro{E_4}{E_5}}{E_2}$, $\cro{E_4}{\cro{E_1}{E_3}}$, \\ $ \ \ \ \ + \alpha_5 + \alpha_2$ *&
$\cro{\cro{E_4}{E_2}}{\cro{E_1}{E_3}}$, $\cro{\cro{E_4}{E_5}}{\cro{E_1}{E_3}}$, $\cro{\cro{\cro{E_4}{E_5}}{E_2}}{\cro{E_1}{E_3}}$, \\&
$\cro{\cro{\cro{E_4}{E_5}}{E_2}}{\cro{E_4}{E_3}}$, $\cro{E_1}{\cro{\cro{\cro{E_4}{E_5}}{E_2}}{\cro{E_4}{E_3}}}$, $K_1K_3K_4^2K_5K_2$ \\

\hline $ \alpha_1 + 2 \alpha_3 + 2 \alpha_4 $& $E_3$, $\cro{E_3}{E_4}$, $\cro{E_3}{E_1}$,  $\cro{\cro{E_3}{E_4}}{E_5}$, $\cro{\cro{E_3}{E_4}}{E_2}$, \\$ \ \ \ \ + \alpha_5 + \alpha_2$ *& $\cro{\cro{E_3}{E_1}}{E_4}$,  $\cro{\cro{\cro{E_3}{E_1}}{E_4}}{E_5}$, $\cro{\cro{\cro{E_3}{E_1}}{E_4}}{E_2}$, \\& 
$\cro{\cro{\cro{E_3}{E_4}}{E_5}}{E_2}$, $\cro{\cro{\cro{\cro{E_3}{E_1}}{E_4}}{E_5}}{E_2}$, $\cro{\cro{\cro{\cro{E_3}{E_4}}{E_5}}{E_2}}{E_4}$, \\& $\cro{\cro{\cro{\cro{\cro{E_3}{E_1}}{E_4}}{E_5}}{E_2}}{E_4}$, $\cro{\cro{\cro{\cro{\cro{\cro{E_3}{E_1}}{E_4}}{E_5}}{E_2}}{E_4}}{E_3}$,\\&  $K_1K_3^2K_4^2K_5K_2$ \\

\hline $\alpha_1 + \alpha_3 + 2 \alpha_4 $& $E_1$ , $E_4$, $E_6$, $\cro{E_4}{E_2}$,  $\cro{E_4}{\cro{E_1}{E_3}}$, $\cro{E_4}{\cro{E_6}{E_5}}$, $\cro{\cro{E_4}{E_2}}{\cro{E_1}{E_3}}$, \\ $\ \ \ \ + \alpha_5 + \alpha_6 + \alpha_2$ & $\cro{\cro{E_4}{E_2}}{\cro{E_6}{E_5}}$, $\cro{\cro{E_4}{\cro{E_1}{E_3}}}{\cro{E_6}{E_5}}$, \\& $\cro{\cro{\cro{E_4}{E_2}}{\cro{E_1}{E_3}}}{\cro{E_6}{E_5}}$,  $\cro{\cro{\cro{E_4}{E_2}}{\cro{E_1}{E_3}}}{\cro{E_4}{E_5}}$, \\& $\cro{\cro{\cro{E_4}{E_2}}{\cro{E_6}{E_5}}}{\cro{E_4}{E_3}}$, $\cro{E_4}{\cro{\cro{\cro{E_4}{E_2}}{\cro{E_1}{E_3}}}{\cro{E_6}{E_5}}}$, \\& $K_1K_3K_4^2K_5K_6K_2$ \\
\hline

 $\alpha_1 + 2 \alpha_3 + 2 \alpha_4 $& $E_6$, $E_3$, $\cro{E_3}{E_1}$, $\cro{E_3}{E_4}$,
$\cro{\cro{E_3}{E_4}}{E_1}$, $\cro{\cro{E_3}{E_4}}{E_2}$,\\ $ \ \ \ \ \alpha_5 + \alpha_6 + \alpha_2$ *& $\cro{\cro{E_3}{E_4}}{\cro{E_6}{E_5}}$, $\cro{\cro{\cro{E_3}{E_4}}{E_2}}{E_1}$, $\cro{\cro{\cro{E_3}{E_4}}{E_1}}{\cro{E_6}{E_5}}$,\\& $\cro{\cro{\cro{E_3}{E_4}}{E_2}}{\cro{E_6}{E_5}}$, $\cro{\cro{\cro{\cro{E_3}{E_4}}{E_2}}{E_1}}{\cro{E_6}{E_5}}$,\\& $\cro{\cro{\cro{\cro{E_3}{E_4}}{E_2}}{\cro{E_6}{E_5}}}{E_4}$, $\cro{\cro{\cro{\cro{\cro{E_3}{E_4}}{E_2}}{E_1}}{\cro{E_6}{E_5}}}{E_4}$,\\&
$\cro{\cro{\cro{\cro{E_3}{E_4}}{E_2}}{E_1}}{\cro{\cro{E_3}{E_4}}{E_5}}$,\\ & 
$\cro{E_3}{\cro{\cro{\cro{\cro{\cro{E_3}{E_4}}{E_2}}{E_1}}{\cro{E_6}{E_5}}}{E_4}}$,\\& $K_1K_3^2K_4^2K_5K_6K_2$ \\

\hline $\alpha_1 + 2 \alpha_3 + 2 \alpha_4 $ & $E_3$, $E_5$, $\cro{E_3}{E_1}$, $\cro{E_5}{E_6}$, $\cro{E_3}{\cro{E_5}{E_4}}$, $\cro{E_3}{\cro{\cro{E_5}{E_6}}{E_4}}$,\\ $\ \ \ \ + 2 \alpha_5 + \alpha_6 + \alpha_2$ & $\cro{\cro{E_3}{E_1}}{\cro{E_5}{E_4}}$,
$\cro{\cro{E_3}{\cro{E_5}{E_4}}}{E_2}$, $\cro{\cro{E_3}{E_1}}{\cro{\cro{E_5}{E_6}}{E_4}}$,\\& $\cro{\cro{E_3}{\cro{\cro{E_5}{E_6}}{E_4}}}{E_2}$, $\cro{\cro{\cro{E_3}{E_1}}{\cro{E_5}{E_4}}}{E_2}$,\\& $\cro{\cro{\cro{E_3}{E_1}}{\cro{\cro{E_5}{E_6}}{E_4}}}{E_2}$, $\cro{\cro{\cro{\cro{E_3}{E_1}}{\cro{E_5}{E_4}}}{E_2}}{\cro{E_3}{E_4}}$,\\& $\cro{\cro{\cro{E_3}{\cro{\cro{E_5}{E_6}}{E_4}}}{E_2}}{\cro{E_5}{E_4}}$,\\ & $\cro{\cro{\cro{\cro{E_3}{E_1}}{\cro{\cro{E_5}{E_6}}{E_4}}}{E_2}}{\cro{E_3}{E_4}}$,\\& $\cro{\cro{\cro{\cro{E_3}{E_1}}{\cro{\cro{E_5}{E_6}}{E_4}}}{E_2}}{\cro{E_5}{E_4}}$,\\&
$\cro{E_5}{\cro{\cro{\cro{\cro{E_3}{E_1}}{\cro{\cro{E_5}{E_6}}{E_4}}}{E_2}}{\cro{E_3}{E_4}}}$,\\&
$K_1K_3^2K_4^2K_5^2K_6K_2$ \\

\hline
\end{tabular}}
 
\begin{flushleft}
\scriptsize{ \begin{tabular}{|l|l|}

\hline  $\alpha_1 + 2 \alpha_3 + 3 \alpha_4 $ & $E_4$, $\cro{E_4}{E_3}$, $\cro{E_4}{E_5}$,
$\cro{\cro{E_4}{E_5}}{E_6}$, $\cro{\cro{E_4}{E_3}}{E_1}$, $\cro{\cro{E_4}{E_3}}{E_5}$,\\$ \ \ \ \ + 2 \alpha_5 + \alpha_6 + \alpha_2$& $\cro{\cro{\cro{E_4}{E_3}}{E_5}}{E_1}$, $\cro{\cro{\cro{E_4}{E_3}}{E_5}}{E_6}$, $\cro{\cro{\cro{\cro{E_4}{E_3}}{E_5}}{E_1}}{E_6}$,\\& $\cro{\cro{\cro{E_4}{E_3}}{E_5}}{\cro{E_4}{E_2}}$, $\cro{\cro{\cro{\cro{E_4}{E_3}}{E_5}}{E_1}}{\cro{E_4}{E_2}}$,\\& $\cro{\cro{\cro{\cro{E_4}{E_3}}{E_5}}{E_6}}{\cro{E_4}{E_2}}$, $\cro{\cro{\cro{\cro{\cro{E_4}{E_3}}{E_5}}{E_1}}{E_6}}{\cro{E_4}{E_2}}$,\\&
$\cro{\cro{\cro{\cro{\cro{E_4}{E_3}}{E_5}}{E_1}}{\cro{E_4}{E_2}}}{E_3}$,
$\cro{\cro{\cro{\cro{\cro{E_4}{E_3}}{E_5}}{E_6}}{\cro{E_4}{E_2}}}{E_5}$,\\&
$\cro{\cro{\cro{\cro{\cro{\cro{E_4}{E_3}}{E_5}}{E_1}}{E_6}}{\cro{E_4}{E_2}}}{E_3}$,\\&
$\cro{\cro{\cro{\cro{\cro{\cro{E_4}{E_3}}{E_5}}{E_1}}{E_6}}{\cro{E_4}{E_2}}}{E_5}$,\\&
$\cro{\cro{\cro{\cro{\cro{\cro{\cro{E_4}{E_3}}{E_5}}{E_1}}{E_6}}{\cro{E_4}{E_2}}}{E_5}}{E_3}$,\\&
$\cro{E_4}{\cro{\cro{\cro{\cro{\cro{\cro{\cro{E_4}{E_3}}{E_5}}{E_1}}{E_6}}{\cro{E_4}{E_2}}}{E_5}}{E_3}}$,\\&
$K_1K_3^2K_4^3K_5^2K_6K_2$ \\

\hline  $\alpha_1 + 2 \alpha_3 + 3 \alpha_4 $ &
 $E_2$, $\cro{E_2}{E_4}$, $\cro{\cro{E_2}{E_4}}{E_5}$, $\cro{\cro{E_2}{E_4}}{E_3}$, $\cro{\cro{\cro{E_2}{E_4}}{E_3}}{E_5}$,\\ $ \ \ \ \ + 2 \alpha_5 + \alpha_6 + 2 \alpha_2$ &  $\cro{\cro{\cro{E_2}{E_4}}{E_3}}{E_1}$, $\cro{\cro{\cro{E_2}{E_4}}{E_5}}{E_6}$,
$\cro{\cro{\cro{\cro{E_2}{E_4}}{E_3}}{E_1}}{E_5}$, \\ &
$\cro{\cro{\cro{\cro{E_2}{E_4}}{E_5}}{E_6}}{E_3}$, 
$\cro{\cro{\cro{\cro{\cro{E_2}{E_4}}{E_3}}{E_1}}{E_5}}{E_6}$, \\ &
 $\cro{\cro{\cro{\cro{E_2}{E_4}}{E_3}}{E_5}}{E_4}$, $\cro{\cro{\cro{\cro{\cro{E_2}{E_4}}{E_3}}{E_1}}{E_5}}{E_4}$, \\ & $\cro{\cro{\cro{\cro{\cro{E_2}{E_4}}{E_5}}{E_6}}{E_3}}{E_4}$,  $\cro{\cro{\cro{\cro{\cro{\cro{E_2}{E_4}}{E_3}}{E_1}}{E_5}}{E_6}}{E_4}$, \\ &
 $\cro{\cro{\cro{\cro{\cro{\cro{E_2}{E_4}}{E_3}}{E_1}}{E_5}}{E_4}}{E_3}$, $\cro{\cro{\cro{\cro{\cro{\cro{E_2}{E_4}}{E_5}}{E_6}}{E_3}}{E_4}}{E_5}$, \\ &
 $\cro{\cro{\cro{\cro{\cro{\cro{\cro{E_2}{E_4}}{E_3}}{E_1}}{E_5}}{E_6}}{E_4}}{E_3}$, \\ & $\cro{\cro{\cro{\cro{\cro{\cro{\cro{E_2}{E_4}}{E_3}}{E_1}}{E_5}}{E_6}}{E_4}}{E_5}$,\\ &
 $\cro{\cro{\cro{\cro{\cro{\cro{\cro{\cro{E_2}{E_4}}{E_3}}{E_1}}{E_5}}{E_6}}{E_4}}{E_3}}{E_5}$,\\ &
 $\cro{\cro{\cro{\cro{\cro{\cro{\cro{\cro{\cro{E_2}{E_4}}{E_3}}{E_1}}{E_5}}{E_6}}{E_4}}{E_3}}{E_5}}{E_4}$,\\ &
 $\cro{E_2}{\cro{\cro{\cro{\cro{\cro{\cro{\cro{\cro{\cro{E_2}{E_4}}{E_3}}{E_1}}{E_5}}{E_6}}{E_4}}{E_3}}{E_5}}{E_4}}$,\\ &
$K_1K_3^2K_4^3K_5^2K_6K_2$ \\
\hline
 \end{tabular} }
 \end{flushleft}

\bibliography{biblio}

\begin{thebibliography}{Bou81b}

\bibitem[Bou81a]{[B481]}
N.~Bourbaki.
\newblock {\em Groupes et Alg\`ebres de Lie: Chapitres 4,5 et 6}.
\newblock Masson, 1981.

\bibitem[Bou81b]{[B781]}
N.~Bourbaki.
\newblock {\em Groupes et Alg\`ebres de Lie: Chapitres 7 et 8}.
\newblock Masson, 1981.

\bibitem[Cic97]{[Ci1]}
N.~Ciccolo.
\newblock Quantization of co-isotropic subgroups.
\newblock {\em Letters in Mathematical Physics}, 42, 1997.

\bibitem[Dri87]{[Dr1]}
V.G. Drinfeld.
\newblock Quantum groups.
\newblock {\em Proc. ICM-86}, 1:798--820, 1987.

\bibitem[Dri92]{[Dr3]}
V.G. Drinfeld.
\newblock On some unsolved problems in quantum group theory.
\newblock {\em Lect. Note. Math}, 1510:1--8, 1992.

\bibitem[FG06]{[CG1]}
N.~Ciccolo F.~Gavarini.
\newblock Quantum duality principle for coisotropic subgroups and poisson
  quotients.
\newblock {\em Contemporary Geometry and Related Topic}, 2006.

\bibitem[Gav02]{[Ga1]}
F.~Gavarini.
\newblock The quantum duality principle.
\newblock {\em Annales de l'institut Fourier}, 3, 2002.

\bibitem[PE96]{[EK1]}
D.~Kazhdan P.~Etingof.
\newblock Quantization of lie bialgebras i.
\newblock {\em Selecta Math.}, 2:1--41, 1996.

\bibitem[WF91]{[FH91]}
J.~Harris W.~Fulton.
\newblock {\em Representation Theory, A first course}.
\newblock Springer-Verlag, 1991.

\bibitem[Zam11]{[Za1]}
M.~Zambon.
\newblock A construction for coisotropic subalgebras of lie bialgebras.
\newblock {\em J. of Pure and Applied Algebra}, 215, 2011.

\end{thebibliography}

\end{document}